\documentclass[12pt,reqno]{amsart}
\usepackage{amsmath}
\usepackage{amssymb}
\usepackage{amstext}
\usepackage{mathrsfs}
\usepackage{a4wide}
\usepackage{graphicx}
\usepackage{bm}
\allowdisplaybreaks \numberwithin{equation}{section}
\usepackage{color}
\usepackage{cases}

\usepackage{hyperref}
\hypersetup{hypertex=true,
	colorlinks=true,
	linkcolor=blue,
	anchorcolor=blue,
	citecolor=blue }

\numberwithin{equation}{section}

\newtheorem{theorem}{Theorem}[section]
\newtheorem{proposition}[theorem]{Proposition}
\newtheorem{corollary}[theorem]{Corollary}
\newtheorem{lemma}[theorem]{Lemma}

\theoremstyle{definition}

\theoremstyle{remark}
\newtheorem{remark}[theorem]{Remark}

\newcommand{\R}{\mathbb R}
\newcommand{\T}{\mathbb T}
\newcommand{\cH}{\mathcal H}
\newcommand{\cC}{\mathcal C}

\newcommand{\dd}{\,\mathrm d}
\newcommand{\1}{\mathbf 1}

\newcommand{\Div}{\operatorname{div}}

\newcommand{\loc}{\mathrm{loc}}
\newcommand{\M}{{M_\Phi}}

\begin{document}

	\title[Radial symmetry of stationary and uniformly-rotating solutions]{Rigidity of stationary and uniformly rotating planar Euler flows in the $C^1$ Yudovich class}

	\author{Boquan Fan, Yuchen Wang, Chunjing Xie, Weicheng Zhan}
	\address{Boquan Fan, School of Mathematical Sciences and Institute of Natural Sciences, Shanghai Jiao Tong
		University, 800 Dongchuan Road, Shanghai, P. R. China}
	\email{fanboquan22@mails.ucas.ac.cn}
	
	\address{Yuchen Wang, School of Mathematics Science, Tianjin Normal University, Tianjin,  300387, P. R. China}
	\email{wangyuchen@mail.nankai.edu.cn}
	
	\address{Chunjing Xie, School of Mathematical Sciences,  Ministry of Education Key Laboratory of Scientific and Engineering Computing, CMA-Shanghai, Shanghai Jiao Tong University, Shanghai 200240, China.}
	\email{cjxie@sjtu.edu.cn}
	
	\address{Weicheng Zhan, School of Mathematical Sciences, Xiamen University, Xiamen, Fujian, 361005, P. R. China}
	\email{zhanweicheng@amss.ac.cn}

	\begin{abstract}
		We prove a rigidity theorem for stationary and uniformly rotating solutions of the two-dimensional incompressible Euler equation with vorticity
		$
		\omega_0\in C^1(\mathbb{R}^2)\cap L^1(\mathbb{R}^2)
		\cap L^\infty(\mathbb{R}^2).
		$
		If the angular velocity $\Omega$ satisfies
		\[
\Omega\leq\frac12\inf_{\mathbb{R}^2}\omega_0
		\qquad\text{or}\qquad	\Omega\geq\frac12\sup_{\mathbb{R}^2}\omega_0,
		\]
		then $\omega_0$ is radially symmetric, in particular, every stationary vorticity of one sign in the class $C^1(\mathbb{R}^2)\cap L^1(\mathbb{R}^2)
		\cap L^\infty(\mathbb{R}^2)$ must be radially symmetric about some point. Furthermore, for $\Omega\neq0$, the center is necessarily the origin;
		
		The proof is based on the analysis for level-set geometry of a normalized stream function. A componentwise Bernoulli formula, together with the isoperimetric inequality and Pohozaev identities, yields a nonnegative defect measure encoding both geometric defects and topological branching. An analysis at infinity forces this measure to vanish, reducing the problem to a semilinear elliptic equation with a bounded nonnegative Borel nonlinearity. Radial symmetry then follows from a theorem of P.-L.\ Lions.

		\bigskip
		\noindent\textbf{Keywords}: the Euler equations, uniformly rotating
		vorticities, stationary vorticities, radial symmetry, level-set geometry.

		\bigskip
		
		\noindent\textbf{Mathematics Subject Classification (2020)}: Primary 35Q31; Secondary 76B47, 35B06.
	\end{abstract}

	\maketitle
	
	\tableofcontents

	\bibliographystyle{plain}

	\section{Introduction and main result}

	\subsection{The Euler equation}
	The two-dimensional incompressible Euler equation takes the following
	active-scalar form in vorticity variables:
	\begin{equation}\label{Euler}
\partial_t\omega+v\cdot\nabla\omega=0 \quad \text{in }\R_+\times\R^2,
	\end{equation}
	where $\omega$ denotes the scalar vorticity, and the velocity field $v$ is recovered from $\omega$ through the Biot--Savart law
\begin{equation}
 v(t,\cdot)=K*\omega(t,\cdot)\quad \text{with}\quad K(x)=\dfrac{1}{2\pi}\dfrac{(-x_2,x_1)}{|x|^2}.
\end{equation}    
 We refer to
\cite{Bedrossian2022,Majda2002,Marchioro1994,Wolibner1933,Yudovich1963}
	for the classical theory of \eqref{Euler}. A cornerstone of this theory is Yudovich's theorem \cite{Yudovich1963}, which establishes global existence and uniqueness of solutions with initial vorticities $\omega_0\in L^1(\mathbb{R}^2)\cap L^\infty(\mathbb{R}^2)$.
	
	We say that a vorticity is uniformly rotating with angular velocity
	$\Omega\in\mathbb{R}$ if it has the form
	\begin{equation}\label{1-2}
		\omega(t,x)=\omega_0(R_{-\Omega t}x),
	\end{equation}
	where $R_\vartheta$ denotes counterclockwise rotation through the angle $\vartheta$, i.e.,
    \begin{equation}
        R_\vartheta=
        \begin{pmatrix}
            \cos \vartheta & -\sin\vartheta\\
            \sin\vartheta & \cos\vartheta
        \end{pmatrix}.
    \end{equation}
	By the rotational covariance of the Biot--Savart law, the ansatz \eqref{1-2} defines a solution of \eqref{Euler} if and only if its profile $\omega_0$ satisfies
	\begin{equation}\label{1-3}
		\bigl(K*\omega_0+\Omega x^\perp\bigr)\cdot\nabla\omega_0=0  \qquad\text{in }\mathbb{R}^2,
	\end{equation}
 where $x^\perp=(x_2,-x_1)$.
	The stationary equation is recovered as the special case $\Omega=0$, i.e.
	\begin{equation*}
		(K*\omega_0)\cdot\nabla\omega_0=0  \qquad\text{in }\mathbb{R}^2.
	\end{equation*}
	
	In this paper, we investigate the rigidity of uniformly rotating solutions to \eqref{1-3}, including the stationary case $\Omega=0$. It is easy to check that every radial vorticity profile (i.e., $\omega_0(x)=\omega_0(|x|)$) satisfies \eqref{1-3} for every $\Omega$. Hence it is natural to ask under what additional hypotheses the solutions of \eqref{1-3} have to be radially symmetric. Results of this kind are usually referred to as \emph{rigidity results}, in contrast with \emph{flexibility results}, which investigate the existence of genuinely nontrivial solutions near some symmetric states.
	
	\subsection{Related work}
	There is by now an extensive literature on rigidity and flexibility for uniformly rotating vortex patches and smooth vorticities. Fraenkel \cite{Fraenkel2000}, using the method of moving planes, established the radial symmetry of stationary, simply connected vortex patches. Under the additional assumption that the patch is convex, Hmidi \cite{Hmidi2015} proved the corresponding rigidity result for simply connected rotating patches with $\Omega\in(-\infty,0)\cup\left\{\frac12\right\}$. G\'omez-Serrano, Park, Shi and Yao \cite{GomezSerrano2021} subsequently showed that every planar Euler vortex patch rotating with uniform angular  velocity $\Omega\in(-\infty,0]\cup\left[\frac12,\infty\right)$ is radial, without any convexity assumption, and that the two thresholds are sharp. In the smooth setting, they also proved radial symmetry for nonnegative, compactly supported vorticities rotating with uniformly velocity $\Omega\leq0$.
	
	More recently, Fan, Wang and Zhan \cite{Fan2025} proved that a compactly supported $C^2$ uniformly rotating vorticity is radial whenever
	$\Omega\leq\frac12\inf_{\mathbb{R}^2}\omega_0 $  or $\Omega\geq\frac12\sup_{\mathbb{R}^2}\omega_0$,
	allowing the vorticity to change sign. They also established the corresponding rigidity result for finite multipatches whose boundaries consist of finitely many pairwise disjoint Jordan curves. Analogous rigidity results for the Euler equation in a disk were obtained in \cite{Fan2024}.
	
	Regularity, quantitative estimates, and global bifurcation for vortex patches are treated in \cite{Bertozzi1993,Chemin1993,Serfati1994,Hmidi2015,Hmidi2013,deLaHoz2016,Hassainia2020,Park2022}. Nonradial rotating and time-periodic solutions in the complementary range of angular velocities are constructed in \cite{Garcia2020,Garcia2024,Enciso2025}. In particular, the smooth nonradial rotating flows of \cite{Enciso2025} rule out an unconditional rigidity theorem.
	
	Ruiz \cite{Ruiz2023} proved radial symmetry for compactly supported steady velocity fields under assumptions involving either the annular structure of the support or a nondegeneracy condition along its boundary. De Regibus, Esposito and Ruiz \cite{DeRegibus2026} considered entire stationary flows with finite kinetic energy. They showed that connectedness of the stagnation set leads to an autonomous semilinear formulation and, under additional hypotheses at infinity, to radial symmetry. Hamel and Nadirashvili \cite{Hamel2019,Hamel2023} established Liouville-type theorems based on the geometry of streamlines and the prescribed structure of the stagnation set. In bounded domains, Wang and Zhan \cite{Wang2023} derived rigidity from the topology of the stagnation set together with local symmetry arguments, while Gui, Xie and Xu \cite{Gui2026} classified slowly growing steady flows in terms of their sets of flow angles.
	
	Other structural results include the local description of steady states by Choffrut and Sz\'ekelyhidi \cite{Choffrut2014}, the flexibility and rigidity theory of Constantin, Drivas and Ginsberg \cite{Constantin2021}, stationary structures near Kolmogorov and Poiseuille flows \cite{CotiZelati2023}, and the analytic classification theorem of Elgindi, Huang, Said and Xie \cite{Elgindi2026a}. Elgindi and Huang \cite{Elgindi2026b} show that, even under additional structural assumptions, a smooth steady state need not admit a global single-valued relation between vorticity and stream function. At much weaker regularity, convex integration leads to a different flexibility theory \cite{DeLellis2009,DeLellis2012,DeLellis2013}. Sign-changing stationary vorticities are outside the rigidity class considered here: smooth nonradial compactly supported examples are known; see \cite{GomezSerrano2025,Enciso2026}.
	\subsection{Motivation and the main result}
	It is worth emphasizing that all of the rigidity results discussed above for smooth rotating vorticities assume compact support, and their proofs appear to rely crucially on this hypothesis. Consequently, they do not cover many natural rotating vortices with noncompact tails. On the other hand, as recalled earlier, Yudovich's theorem singles out
	$
	L^1(\mathbb{R}^2)\cap L^\infty(\mathbb{R}^2)$
	as the natural global well-posedness class for the vorticity equation. This leads to the question of whether the preceding rigidity theory continues to hold for classical rotating profiles at the Yudovich level, namely,
	$
	\omega_0\in C^1(\mathbb{R}^2) \cap L^1(\mathbb{R}^2) \cap L^\infty(\mathbb{R}^2).
	$
	Our main theorem answers this question affirmatively.
	\begin{theorem}\label{main}
		Let $\Omega\in\mathbb{R}$, and let
			$\omega_0\in C^1(\mathbb{R}^2) \cap L^1(\mathbb{R}^2) \cap L^\infty(\mathbb{R}^2)$
		satisfy
		\begin{equation}\label{1-6}
			\bigl(K*\omega_0+\Omega x^\perp\bigr)\cdot\nabla\omega_0=0  \qquad\text{in }\mathbb{R}^2.
		\end{equation}
		Suppose that
		\begin{equation}\label{threshold}
			\Omega\leq\frac12\inf_{\mathbb{R}^2}\omega_0 \qquad\text{or}\qquad \Omega\geq\frac12\sup_{\mathbb{R}^2}\omega_0.
		\end{equation}
		Then $\omega_0$ is radially symmetric. More precisely, there exist a point $x_0\in\mathbb{R}^2$ and a function $W\colon[0,\infty)\to\mathbb{R}$ such that
		\[
		\omega_0(x)=W(|x-x_0|)  \qquad\text{for every }x\in\mathbb{R}^2.
		\]
		If $\Omega\neq0$, the center $x_0$ must be chosen to be the origin.
	\end{theorem}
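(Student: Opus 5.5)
Our plan follows the strategy of \cite{GomezSerrano2021,Fan2025}, with every use of compact support replaced by an analysis at infinity. Set $\psi=\Gamma*\omega_0$ with $\Gamma=\frac1{2\pi}\log|x|$, so $\Delta\psi=\omega_0$ and $K*\omega_0=\nabla^\perp\psi$. Define the normalized stream function
\[
\varphi(x)=\psi(x)-\frac{\Omega}{2}|x|^2 .
\]
Then \eqref{1-6} reads $\nabla^\perp\varphi\cdot\nabla\omega_0=0$, and $\Delta\varphi=\omega_0-2\Omega$. Under \eqref{threshold}, $\Delta\varphi$ has a fixed sign. Replacing $(\omega_0,\Omega)$ by $(-\omega_0,-\Omega)$ if necessary, we may assume $\Omega\le\frac12\inf\omega_0$, so $-\Delta\varphi=2\Omega-\omega_0\le0$. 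We then work with $u=-\varphi$, or with $\varphi$ itself, as a function whose Laplacian has one sign.

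Since $\omega_0\in L^1\cap L^\infty\cap C^1$, we have $\psi\in C^{2,\alpha}_{\loc}$, $\nabla\psi=O(1/|x|)$, and $\psi=\frac{m}{2\pi}\log|x|+o(\log|x|)$ with $m=\int\omega_0$. Hence when $\Omega\neq0$ the function $\varphi$ is proper and has quadratic growth. When $\Omega=0$, $\omega_0\ge0$ forces $m\ge0$ and $\varphi$ grows logarithmically (the case $m=0$ means $\omega_0\equiv0$).

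The first step is a componentwise Bernoulli formula. On each connected component $\Sigma$ of a regular level set $\{\varphi=c\}$, $\omega_0$ is constant, since it is transported along the streamlines of $\nabla^\perp\varphi$. So $\omega_0=F_\Sigma$ there, with $F$ depending on the component and not globally on $c$. For a.e.\ $c$, integrating $\Delta\varphi$ over the superlevel/sublevel region bounded by $\Sigma$ gives $\int_{\{\varphi=c\}}|\nabla\varphi|=\int_{\{\varphi<c\}}(\omega_0-2\Omega)$, through the coarea formula. The isoperimetric inequality combined with Cauchy--Schwarz, $|\{\varphi=c\}|^2\le \int|\nabla\varphi|\int|\nabla\varphi|^{-1}$, then yields a differential inequality for $\mu(c)=|\{\varphi<c\}|$. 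Following the rearrangement argument, we compare $\varphi$ with its Schwarz symmetrization $\varphi^*$.

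Next we multiply the equation by $x\cdot\nabla\varphi$ and integrate over $\{\varphi<c\}$; this is a Pohozaev identity on the sublevel sets. The boundary terms are rewritten with the Bernoulli relation, which produces an identity of the form
\[
\int_{\R^2}(\text{quadratic in }\nabla\varphi\text{ with weight }2\Omega-\omega_0)\,dx=-\,\nu\Bigl(\R^2\Bigr)+\text{(terms at infinity)},
\]
where $\nu\ge0$ is a defect measure in the level variable $c$. This measure collects the isoperimetric deficit on each level and the contribution of level sets splitting into several components (topological branching, where componentwise Bernoulli constants differ). The sign condition \eqref{threshold} is exactly what makes each contribution nonnegative.

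The main obstacle is the analysis at infinity: showing that the boundary terms on $\{\varphi=c\}$ vanish as $c\to\infty$, so that the identity forces $\nu\equiv0$. We would first try using $\nabla\psi=O(|x|^{-1})$ together with $\omega_0\in L^1$ to control $\int_{\partial B_R}|x||\nabla\psi|^2$ and $\int_{\partial B_R}|x|\,|\omega_0|\,|\psi|$ along a sequence $R_k\to\infty$; such a sequence exists by averaging, since $\int|\omega_0|<\infty$ implies $\liminf R\int_{\partial B_R}|\omega_0|=0$. The level sets $\{\varphi=c\}$ lie between two balls of comparable radii because $\varphi$ is asymptotically radial, so the two exhaustions can be interchanged. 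For $\Omega=0$ the logarithmic growth is weak, and we would use $\varphi-\frac{m}{2\pi}\log|x|=o(\log|x|)$ together with the gradient asymptotics $\nabla\psi=\frac{m}{2\pi}\frac{x}{|x|^2}+o(|x|^{-1})$ in an averaged sense.

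Once $\nu=0$, there is no branching and there is isoperimetric equality. Every regular level set is then a single circle, and $\omega_0$ becomes a function of $\varphi$ globally, $\omega_0=f(\varphi)$, with $f$ a bounded Borel function. The sign condition gives $-\Delta\varphi=f(\varphi)$ with $f\ge0$ in the appropriately signed variable. The theorem of P.-L.\ Lions on radial symmetry of solutions of $-\Delta u=f(u)$ with nonnegative Borel $f$ via level sets then gives that $\varphi$, hence $\omega_0$, is radial about some $x_0$. Finally, if $\Omega\neq0$, then $\psi=\varphi+\frac\Omega2|x|^2$ with $\psi$ radial about $x_0$ (as $\omega_0$ is) and $\varphi$ radial about $x_0$. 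This forces $|x|^2-|x-x_0|^2$ to be radial about $x_0$, so $x_0=0$.
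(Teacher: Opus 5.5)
Your outline (componentwise Bernoulli constants, Cauchy--Schwarz plus isoperimetry, Pohozaev, a defect measure, analysis at infinity, Lions) has the same architecture as the paper. However, the two steps that carry the proof are not supplied, and your plan for the step you call the main obstacle rests on estimates that fail. Write $\Phi=-\varphi$ and $\rho=-\Delta\Phi\ge0$. Your sublevel components are then the superlevel components $D$ of $\Phi$, with $\rho\equiv c_D$ on $\partial D$.

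First, the defect measure is never constructed. A differential inequality for the total area, plus comparison with $\varphi^*$, cannot be matched with the Bernoulli--Pohozaev quantity. The enclosed $\rho$-mass has derivative $\sum_D c_D\,\frac{\dd}{\dd t}|D|$ with component-dependent $c_D$, and nothing in that argument registers splitting. A Pohozaev identity by itself does not produce a nonnegative measure charging splitting levels either. What is needed is a level function that dominates the Bernoulli--Pohozaev quantity and jumps where components separate. The paper takes $S(t)=\sum_D\bigl(\int_D\rho\bigr)^2$. On regular intervals, componentwise Cauchy--Schwarz and isoperimetry give $-S'\ge 8\pi A$ with $A(t)=\sum_D c_D(t)|D|$. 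At a level where components of masses $m_1,m_2$ separate, $S$ drops by at least $2m_1m_2$. This is paired with the branchwise formula $H(x)-h_D(t)=\int_t^{\Phi(x)}c_{D_s(x)}(s)\,\dd s$ for $\nabla H=\rho\nabla\Phi$, which must be justified across critical levels. It gives $I(t):=\sum_D\int_D(H-h_D(t))=\int_t^{\max\Phi}A$. Hence $-\dd S=8\pi A\,\dd t+\mu$ with $\mu\ge0$, and $\delta(t)=S(t)-8\pi I(t)=\mu((t,\max\Phi))$ is nonincreasing. Your claim that the sign condition ``makes each contribution nonnegative'' is exactly this construction, left unproved.

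Second, your plan at infinity does not work as stated.

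(i) $\nabla\psi=O(|x|^{-1})$ does not follow from $\omega_0\in C^1\cap L^1\cap L^\infty$. Bumps of radius $1$ and height $k^{-2}$ centred at points $x_k$ with $|x_k|=2^k$ give $|\nabla\psi|\gtrsim k^{-2}$ at distance one from $x_k$. Only $\nabla\psi\to0$ is available. The logarithmic convolution defining $\psi$ also need not converge, so it must be renormalized.

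(ii) Even with that decay, the boundary energy does not vanish. The radial part alone gives $R\int_{\partial B_R}\bigl|\frac{m}{2\pi}\frac{x}{|x|^2}\bigr|^2\dd\cH^1=\frac{m^2}{2\pi}$. This term must be split off to balance $\lim_{t\to-\infty}S(t)=m^2$. One then needs $\int_{R_k<|x|<2R_k}|\nabla w|^2\to0$ along some $R_k\to\infty$, where $w=\psi-\frac{m}{2\pi}\log|x|$. That is a genuine lemma (weak Hardy--Littlewood--Sobolev on dyadic annuli plus a pigeonhole on $j\,m_j^2$). It is not a consequence of $\liminf_R R\int_{\partial B_R}|\omega_0|=0$.

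(iii) For $\Omega=0$, an $o(\log|x|)$ error does not confine level curves to annuli of bounded ratio, so the two exhaustions cannot be interchanged. The paper instead uses $S(t)\to m^2$ and $\int H=\int A$ (after normalizing $H\to0$ via $\nabla H\in L^1$), together with a Pohozaev identity on balls.

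(iv) For $\Omega\ne0$, $\rho$ has infinite mass, so there is no global identity. Moreover $S(t)$ and $8\pi I(t)$ both grow like $r^4$ on the level curves, so nothing vanishes; only their difference can be small. The missing estimate comes from the radial-graph parametrization $r_t(\theta)$ of low level curves and Cauchy--Schwarz. For all sufficiently low $t$,
\[
\delta(t)\le 2\pi\int_0^{2\pi}|\nabla\Phi|^2\,r_t'(\theta)^2\,\dd\theta\le 4\pi\int_0^{2\pi}\bigl|\psi_\theta\bigl(r_t(\theta),\theta\bigr)\bigr|^2\,\dd\theta .
\]
Since $\psi_\theta=w_\theta$, average over the interval of levels whose curves lie in $R_k<|x|<2R_k$; it has length $\gtrsim R_k^2$. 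This gives $\delta(t_k)\to0$ along $t_k\to-\infty$, and monotonicity of $\delta$ then yields $\mu=0$.

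Finally, isoperimetric equality forces circles only at levels where $\rho>0$ on the curve, and Lions' theorem is a statement on a disk. You still need to apply it on superlevel disks, show their centres coincide, and extend across $\{\rho=0\}$ by harmonic continuation.
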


    There are a few remarks in order.

    \begin{remark}
	Theorem~\ref{main} gives a classification for rotating vorticities in the natural Yudovich framework. In particular, every stationary vorticity of one sign is radial about some point, whereas for a genuinely rotating solution the center of symmetry is pinned to the prescribed center of rotation, namely the origin. The theorem removes the compact-support hypothesis inherent in the preceding theory and extends rigidity from spatially localized equilibria to the full Yudovich setting.    
    \end{remark}

    \begin{remark}
	The threshold \eqref{threshold} admits a simple intrinsic elliptic interpretation. If $\psi$ denotes the standard stream function associated with $\omega_0$, then the relative stream function
	$\Psi=\psi+\frac{\Omega}{2}|x|^2$
	satisfies
	\begin{equation}\label{eq:effectivesource}
	-\Delta\Psi=\omega_0-2\Omega.
	\end{equation}
	Thus \eqref{threshold} is precisely the condition that the effective source in \eqref{eq:effectivesource}  has a fixed sign. The theorem therefore identifies this one-sided elliptic structure as the fundamental mechanism governing radial rigidity.
    \end{remark}

	\subsection{Outline of the proof}
	The proof is based on the level-set geometry of a normalized stream function, following ideas from elliptic symmetry theory; see \cite{Lions1981,Kesavan1994,Serra2013,Talenti1976,Zhan2026}. 
    
    The transport equation implies that the vorticity is constant on each connected regular level curve, but these constants may differ between components of the same level. Thus a global semilinear equation is not available at the outset. We first keep track of the individual components and then use the behavior at infinity to establish connectedness and symmetry.

Recall that $\Psi=\psi+\frac{\Omega}{2}|x|^2$ is the relative stream function. Excluding the trivial zero profile, we choose $\sigma\in\{-1,1\}$ according to \eqref{threshold} and set
\[
\Phi:=\sigma\Psi,\qquad \rho:=\sigma(\omega_0-2\Omega)\ge0.
\]
By defining $\psi$ through a renormalized logarithmic potential, we obtain
\[
-\Delta\Phi=\rho,\qquad \nabla^\perp\Phi\cdot\nabla\rho=0,\qquad \Phi(x)\to-\infty\quad\text{as }|x|\to\infty.
\]
Here $\Phi\in C^2(\R^2)$ and $\rho\in C^1(\R^2)\cap L^\infty(\R^2)$. In particular, the superlevel sets of $\Phi$ are bounded, even when the vorticity has noncompact support.

Set $\M:=\max_{\R^2}\Phi$. For $t<\M$, let $E_\Phi(t):=\{\Phi>t\}$ and denote its connected components by $\cC_\Phi(t)$. If $t$ is a regular value, meaning that $\nabla\Phi\ne0$ on $\Phi^{-1}(t)$, then $\cC_\Phi(t)$ is finite. Moreover, superharmonicity excludes holes, so each component $D$ is a Jordan domain bounded by a single regular level curve. The transport equation implies that $\rho$ is constant on $\partial D$; we denote this value by $c_D(t)$.

The same equation shows that $\rho\nabla\Phi$ is curl-free. Hence there exists a Bernoulli function $H$ satisfying $\nabla H=\rho\nabla\Phi$. Write $h_D(t)$ for its constant value on $\partial D$. By integrating along the level-set branches and then over the components, we obtain, for every regular value $t<\M$,
\[
I(t):=\sum_{D\in\cC_\Phi(t)}\int_D\bigl(H(x)-h_D(t)\bigr)\,\dd x=\int_t^\M A(s)\,\dd s,
\]
where $A(t):=\sum_{D\in\cC_\Phi(t)}c_D(t)|D|$ at regular values and $A(t):=0$ otherwise, with $|D|$ denoting the area of $D$. The main point is to justify the branchwise representation across critical levels, where components may split. This identity relates the variation of the boundary values to an integral of $H$.

We next introduce the mass-square function
\[
S(t):=\sum_{D\in\cC_\Phi(t)}\left(\int_D\rho(x)\,\dd x\right)^2.
\]
The divergence theorem expresses each component mass as the boundary integral of $|\nabla\Phi|$. By the coarea formula, the Cauchy--Schwarz inequality, and the planar isoperimetric inequality, we have $-S'(t)\ge8\pi A(t)$ on every interval of regular values. Since $S$ is nonincreasing across critical levels as well, we obtain the following identity of measures:
\[
-\dd S=8\pi A(t)\,\dd t+\mu,
\]
where $-\dd S$ is the negative distributional derivative of $S$, and $\mu$ is a nonnegative Radon measure on $(-\infty,\M)$. On regular levels, $\mu$ measures the excess in the geometric inequalities. At critical levels, it also accounts for splitting: separation into two components of positive limiting masses $m_1,m_2$ produces a loss of at least $2m_1m_2$ in $S$, and hence a positive atom of $\mu$. Since $S(t)\to0$ as $t\nearrow\M$, we have
\[
\delta(t):=S(t)-8\pi I(t)=\mu((t,\M))\ge0
\]
at every regular value. Thus $\delta$ is nonincreasing, and $\mu=0$ follows once $\delta(t_k)\to0$ along regular values $t_k\to-\infty$.

We now explain why $\mu=0$ implies radial symmetry. Every nonempty superlevel component has positive $\rho$-mass, so the absence of splitting forces every regular superlevel set to be connected. Its boundary is therefore a single Jordan curve, and the transport equation yields $\rho=f(\Phi)$ almost everywhere for a bounded nonnegative Borel function $f$ on $(-\infty,\M]$. Equality in the geometric inequalities further implies that $E_\Phi(t)$ is a disk for almost every regular value with $f(t)>0$. Applying the theorem of P.-L.\ Lions \cite{Lions1981} to $\Phi-t$ on these disks, we obtain radial symmetry and radial monotonicity; see also \cite[Theorem~1]{Serra2013}. Radial monotonicity and the nestedness of these superlevel sets then force a common center. Any remaining exterior region has $\rho=0$ and is treated by harmonic continuation.

It remains to prove that $\mu=0$. When $\Omega=0$, we have $\rho=\sigma\omega_0\ge0$ and $\rho\in L^1(\R^2)$. Set $\Gamma:=\int_{\R^2}\rho\,\dd x$. After normalizing $H$ to vanish at infinity, the componentwise formula gives $H\ge0$ and $\int_{\R^2}H\,\dd x=\int_{-\infty}^\M A(t)\,\dd t$. Since $S(t)\to\Gamma^2$ as $t\to-\infty$, the mass-square decomposition yields
\[
\Gamma^2=8\pi\int_{\R^2}H\,\dd x+\mu((-\infty,\M)).
\]
On the other hand, a global Pohozaev identity gives $\Gamma^2=8\pi\int_{\R^2}H\,\dd x$. To justify the passage to infinity, we subtract the radial logarithmic term $-\frac{\Gamma}{2\pi}\log|x|$ from $\Phi$ and show that the gradient energy of the remainder tends to zero along a sequence of expanding annuli. Comparing the two identities proves $\mu=0$.

When $\Omega\ne0$, we have
\[
\Phi=\varphi-\frac b4|x|^2,\qquad \rho=b+\sigma\omega_0,
\]
where $b:=2|\Omega|$, $\sigma=-\operatorname{sgn}\Omega$, and $\varphi:=\sigma\psi$. Since $\rho$ has infinite total mass, we instead estimate $\delta$ on low levels. The quadratic term and the decay $\nabla\varphi(x)\to0$ imply that sufficiently low level curves are radial graphs in large annuli. A boundary Pohozaev identity bounds $\delta(t)$ by the angular energy of $\varphi$ along these curves. Subtracting the radial logarithmic part leaves the angular derivative unchanged, while the gradient energy of the remainder tends to zero along suitable annuli. Averaging over the levels crossing these annuli gives $t_k\to-\infty$ with $\delta(t_k)\to0$. By the nonnegativity and monotonicity of $\delta$, we conclude that $\mu=0$. Finally, the asymptotic relation $\nabla\Phi(x)=-\frac b2x+o(1)$ fixes the center at the origin. In the stationary case, the center remains free.

	\subsection{Organization of the paper}
	The rest of the paper is organized as follows. Sections~\ref{s3}--\ref{s5} develop an abstract rigidity framework for a nonstandard elliptic system. Section~\ref{s6} proves Theorem~\ref{main} in the case $\Omega=0$, while Section~\ref{s7} treats the rotating case $\Omega\neq0$ and completes the proof. Appendix~\ref{app1} collects several preliminary estimates.

	\section{Finite-level geometry and the Bernoulli identity}\label{s3}
	In Sections \ref{s3}--\ref{s5}, we shall develop an abstract rigidity framework for a nonstandard elliptic system. This framework will subsequently be applied to the present problem to prove the main theorem. Assume $\Phi\in C^2(\R^2)$ and $\rho \in C^1(\R^2)\cap L^\infty(\R^2)$ satisfies
	\begin{equation}\label{3-1}
		\begin{aligned}
			\rho=-\Delta\Phi,  \qquad \nabla^\perp\Phi\cdot\nabla\rho=0,
		\end{aligned}
	\end{equation}
    together with
    \begin{equation}
        \rho\geq 0 \quad \text{and} \quad \Phi(x)\to-\infty \,\,\text{as }|x|\to\infty.\\
    \end{equation}
    It follows from \eqref{3-1} that one has
	\[
	\operatorname{curl}(\rho\nabla\Phi)=\nabla^\perp\Phi\cdot\nabla\rho=0.
	\]
	Hence there exists a globally defined  function $H\in C^1(\R^2)$, which is called Bernoulli function and is  unique up to an additive constant, such that
	\[
	\nabla H=\rho\nabla\Phi.
	\]
	The delicate point is to trace the corresponding branchwise constants across critical levels. This will lead to a one-dimensional representation of $H$ along each branch, which underlies the integral inequalities developed in the next section.
	
	Set
	\[
	\M:=\max_{\R^2}\Phi \quad \text{and}
	\quad 
	E_\Phi(t):=\{x\in\R^2:\Phi(x)>t\}\,\text{ for}\,\, t<\M.
	\]
Denote $\cC_\Phi(t)$ to be the collection of connected components of $E_\Phi(t)$. Furthermore, we denote $\mathcal{R}_\Phi$ to the range of the function $\Phi$ and 
\[
\mathscr R_\Phi:=\{t:  \nabla \Phi(x)\neq 0 \text{ for any } x\in \mathbb{R}^2 \text{ satisfying } \Phi(x)=t\}
\]
to be the set of regular values of $\Phi$. By Sard's theorem, $\mathcal{R}_\Phi\setminus\mathscr R_\Phi$ is a one-dimensional measure zero set.

	\subsection{Regular superlevels of $\Phi$}
	We begin with the following elementary description of the regular level sets of $\Phi$. The superharmonicity of $\Phi$ plays an important role here. Indeed, the strong maximum principle implies that no connected component of $E_\Phi(t)$ can have a hole.
	\begin{lemma}\label{le3-1}
		Let $t\in \mathscr R_\Phi$. Then
		\begin{enumerate}
			\item $\Phi^{-1}(t)$ is a finite disjoint union of $C^2$ Jordan curves;
        \item $\cC_\Phi(t)$ is finite;
			\item the map $D\mapsto\partial D$ is a bijection from $\cC_\Phi(t)$ onto the connected components of $\Phi^{-1}(t)$; for each $D\in\cC_\Phi(t)$, the set $D$ is the bounded Jordan domain determined by $\partial D$.
		\end{enumerate}
	\end{lemma}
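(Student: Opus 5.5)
We argue in three steps: compactness and the implicit function theorem give (1) and (2), and the maximum principle for the superharmonic function $\Phi$ gives (3).

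\textbf{Step 1: compactness and local structure.} Since $\Phi(x)\to-\infty$ as $|x|\to\infty$ and $\Phi$ is continuous, the level set $\Phi^{-1}(t)$ is closed and bounded, hence compact. For $t\in\mathscr R_\Phi$ we have $\nabla\Phi\neq0$ on $\Phi^{-1}(t)$. The implicit function theorem then shows that $\Phi^{-1}(t)$ is a compact one-dimensional embedded $C^2$ submanifold of $\R^2$. By the classification of compact one-manifolds, each connected component is a $C^2$ Jordan curve. The components are open in $\Phi^{-1}(t)$ and cover a compact set, so there are finitely many. This proves (1).

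\textbf{Step 2: components of $E_\Phi(t)$.} The set $\overline{E_\Phi(t)}\subset\{\Phi\ge t\}$ is compact, so every $D\in\cC_\Phi(t)$ is a bounded open set with $\partial D\subset\Phi^{-1}(t)$. Near each point $p\in\Phi^{-1}(t)$, the regularity $\nabla\Phi(p)\neq0$ gives a small disk on which $\{\Phi>t\}$ is exactly one side of the curve. Hence each component $\gamma$ of $\Phi^{-1}(t)$ has a collar neighborhood whose $\{\Phi>t\}$ side is connected. That side therefore lies in exactly one component $D(\gamma)$. Every $D$ has nonempty boundary, since it is bounded and nonempty, and that boundary is a union of curves $\gamma$. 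Consequently $\gamma\mapsto D(\gamma)$ is surjective onto $\cC_\Phi(t)$, and $\cC_\Phi(t)$ is finite by (1). This proves (2).

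\textbf{Step 3: no holes.} This is the main point: we show that each $D$ has exactly one boundary curve and equals its interior.

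Fix $D$. The complement $\R^2\setminus\overline D$ has exactly one unbounded component. Suppose $V$ is a bounded component of $\R^2\setminus\overline D$. Then $\partial V\subset\partial D\subset\Phi^{-1}(t)$. Since $-\Delta\Phi=\rho\ge0$, the function $\Phi$ is superharmonic, and the minimum principle on $V$ gives $\Phi\ge t$ in $V$. The strong minimum principle upgrades this to $\Phi>t$ in $V$, unless $\Phi\equiv t$ on $V$; the latter is impossible because $\nabla\Phi\ne0$ near $\partial V$. So $V\subset E_\Phi(t)$. Near $\partial V$ we then have $\Phi>t$ on both sides of a boundary curve, which contradicts the local one-sidedness from Step 2. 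Hence $\R^2\setminus\overline D$ is connected.

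Now let $\gamma\subset\partial D$ be any boundary curve. By the Jordan curve theorem, $\gamma$ has a bounded interior $\mathrm{int}(\gamma)$. The connected set $D$ avoids $\gamma$, so $D$ lies either in $\mathrm{int}(\gamma)$ or in the exterior of $\gamma$.
\begin{itemize}
\item Suppose $D$ lies in the exterior of $\gamma$. Then $\mathrm{int}(\gamma)$ meets $\R^2\setminus\overline D$: the collar side of $\gamma$ on which $\Phi<t$ contains points of the interior. The region $\mathrm{int}(\gamma)$ with $D$ removed would then contain a bounded component of the complement of $\overline D$, which contradicts connectedness of $\R^2\setminus\overline D$.
\item So $D\subset\mathrm{int}(\gamma)$ for every boundary curve $\gamma$. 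If two distinct curves $\gamma_1,\gamma_2\subset\partial D$ existed, the curves are disjoint, so their interiors are either nested or disjoint. Disjoint interiors are impossible since both contain $D$. If they are nested, say $\gamma_2\subset\mathrm{int}(\gamma_1)$, then $D\subset\mathrm{int}(\gamma_2)$. Then $D$ cannot approach $\gamma_1$, which lies outside $\overline{\mathrm{int}(\gamma_2)}$, a contradiction.
\end{itemize}
Therefore $\partial D$ is a single Jordan curve $\gamma$. Moreover $D$ is open, nonempty, contained in $\mathrm{int}(\gamma)$, and closed in $\mathrm{int}(\gamma)$ because $\partial D=\gamma$. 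Connectedness of $\mathrm{int}(\gamma)$ gives $D=\mathrm{int}(\gamma)$.

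Finally, each $\gamma$ bounds only the component $D(\gamma)$. Otherwise two components would share the same boundary, hence the same interior, and would coincide. This gives the bijection in (3).

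The expected obstacle is the topological bookkeeping in Step 3. The analytic input is only the strong minimum principle for superharmonic functions.
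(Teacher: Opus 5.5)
Your proof is correct, but it runs in the opposite direction from the paper's.

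The paper goes from curves to domains. For each level curve $\Gamma\subset\Phi^{-1}(t)$, it applies the (strong) minimum principle for the superharmonic $\Phi$ on the Jordan interior $U$ of $\Gamma$ itself, where $\Phi=t$ on $\partial U$. This gives $\Phi>t$ in $U$, so $U$ is open and closed in $E_\Phi(t)$ and hence a component. Conversely, for any component $D$ and any $x\in\partial D$, local one-sidedness at $x$ forces $D$ to meet the interior of the level curve through $x$, so $D$ equals that interior. Statement (2) then follows directly from (1).

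You go from domains to curves instead. Collars give a surjection $\gamma\mapsto D(\gamma)$. You then apply the minimum principle on bounded components of $\R^2\setminus\overline D$ to rule out holes. Finally, a Jordan-curve case analysis (interior versus exterior, nested versus disjoint interiors) shows that $\partial D$ is a single curve and $D$ is its interior.

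The analytic input is the same in both. The paper's choice of domain for the minimum principle, namely the Jordan interior of a level curve rather than a complementary component of $\overline D$, makes the topological bookkeeping essentially disappear. For instance, nested level curves are excluded automatically because $U\subset E_\Phi(t)$ forces $U\cap\Phi^{-1}(t)=\varnothing$.

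Your route makes the no-holes property explicit as connectedness of $\R^2\setminus\overline D$, at the cost of extra steps. One of these is the collar claim that the $\{\Phi>t\}$ side is consistent and connected along an entire curve. You assert this rather than prove it, but it is standard, for example from the gradient flow used in Lemma~\ref{le3-2}.
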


\begin{proof}
Since $\Phi(x)\to-\infty$ as $|x|\to\infty$, the set $\Phi^{-1}(t)$ is compact. Moreover, $t$ is a regular value. By the implicit function theorem, $\Phi^{-1}(t)$ is a compact one-dimensional $C^2$ manifold without boundary, and hence a finite disjoint union of $C^2$ Jordan curves.

Let $\Gamma$ be any such curve, and let $U$ be the bounded Jordan domain enclosed by $\Gamma$. Since $\Delta\Phi\le0$ in $U$ and $\Phi=t$ on $\partial U$, the maximum principle implies that $\Phi\ge t$ in $U$. If $\Phi$ were constant in $U$, continuity would give $\nabla\Phi=0$ on $\Gamma$, contradicting the regularity of $t$. By the strong maximum principle, we therefore have $\Phi>t$ in $U$. Thus $U\subset E_\Phi(t)$. Moreover, $\partial U\cap E_\Phi(t)=\varnothing$, so $U$ is both open and closed in $E_\Phi(t)$. Since $U$ is connected, it is a connected component of $E_\Phi(t)$.

Conversely, let $D$ be a connected component of $E_\Phi(t)$. Since $D$ is bounded, we may choose $x\in\partial D\subset\Phi^{-1}(t)$. Let $\Gamma$ be the level curve through $x$, and let $U$ be its bounded Jordan domain. By the implicit function theorem, near $x$ the set $E_\Phi(t)$ lies precisely on the $U$-side of $\Gamma$. Hence $D\cap U\ne\varnothing$. Since both $D$ and $U$ are connected components of $E_\Phi(t)$, we have $D=U$. This proves all the assertions.
\end{proof}

	The next result provides a finer geometric description of regular superlevels.
	
	\begin{lemma}\label{le3-2}
		The set $\mathscr R_\Phi$ is open. If $[a,b]\subset\mathscr R_\Phi$, then $\Phi^{-1}([a,b])$ is a finite disjoint union of cylinders, each diffeomorphic to $\mathbb S^1\times[a,b]$ in such a way that $\Phi$ corresponds to projection onto the second factor. Moreover, whenever $a\le s<t\le b$, each connected component of $E_\Phi(s)$ contains exactly one connected component of $E_\Phi(t)$.
	\end{lemma}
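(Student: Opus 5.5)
Openness: the complement of $\mathscr R_\Phi$ is $\Phi(Z)$, where $Z:=\{\nabla\Phi=0\}$ is the closed set of critical points. I will show that $\Phi(Z)$ is closed in $\R$. Take $t_k=\Phi(z_k)\to t$ with $z_k\in Z$. The sequence $t_k$ is bounded below, say $t_k\ge t-1$. Then all $z_k$ lie in the compact superlevel set $\{\Phi\ge t-1\}$, which is compact because $\Phi\to-\infty$ at infinity. A subsequence therefore converges to some $z\in Z$, and continuity gives $\Phi(z)=t$. Hence $t\notin\mathscr R_\Phi$ whenever all $t_k\notin\mathscr R_\Phi$, so $\mathscr R_\Phi$ is open. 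I also note that $t<\M$ for $t\in\mathscr R_\Phi$, since the maximum $\M$ is attained at a critical point. In particular $\Phi^{-1}([a,b])$ is nonempty compact or empty, and the statement concerns only the nonempty case.

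Cylinder structure: put $K:=\Phi^{-1}([a,b])$. This set is compact and $\nabla\Phi\neq0$ on it, so by openness of $\{\nabla\Phi\ne0\}$ there is a neighborhood of $K$ on which the $C^1$ vector field $X:=\nabla\Phi/|\nabla\Phi|^2$ is defined. Along its flow, $\frac{d}{d\tau}\Phi=1$. Starting from a point $y\in\Phi^{-1}(a)$, the flow line stays in $K$ for $\tau\in[0,b-a]$. Indeed, $\Phi$ equals $a+\tau$ there, and the trajectory cannot escape the compact set $K$ before $\Phi$ leaves $[a,b]$, so extendability follows from compactness. This gives the map
\[
F\colon \Phi^{-1}(a)\times[a,b]\to K,\qquad F(y,s)=\text{flow of }y\text{ for time }s-a.
\]
The map $F$ is a $C^1$ diffeomorphism. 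Its inverse is given by flowing backward from $x\in K$ for time $\Phi(x)-a$, and $\Phi\circ F(y,s)=s$. Lemma~\ref{le3-1} applied to $a\in\mathscr R_\Phi$ says that $\Phi^{-1}(a)$ is a finite union of Jordan curves $\Gamma_1,\dots,\Gamma_N$, each diffeomorphic to $\mathbb S^1$. The images $F(\Gamma_i\times[a,b])$ are therefore the required cylinders. They are disjoint because flow lines do not cross. If I want $C^2$ regularity of the diffeomorphism rather than $C^1$, I will note that $X$ is only $C^1$, and the $C^1$ statement is all that is used.

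Nesting: fix $a\le s<t\le b$ and a component $D\in\cC_\Phi(s)$. By Lemma~\ref{le3-1}, $\partial D$ is a single curve of $\Phi^{-1}(s)$. This curve is $F(\Gamma_i\times\{s\})$ for exactly one $i$, since each cylinder meets $\Phi^{-1}(s)$ in one circle. Let $\partial D'$ be the curve $F(\Gamma_i\times\{t\})$ and $D'$ its Jordan domain, which lies in $\cC_\Phi(t)$ by Lemma~\ref{le3-1}. The flow segment from $\partial D$ to $\partial D'$ consists of points with $\Phi>s$ except at its initial point. It is connected and touches $\partial D$ from the side where $\Phi>s$, which by the implicit function argument of Lemma~\ref{le3-1} is the $D$ side. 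Hence $\partial D'\subset D$, and so $D'\subset D$.

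Conversely, suppose $D''\in\cC_\Phi(t)$ lies in $D$. Every component of $E_\Phi(t)$ is contained in a unique component of $E_\Phi(s)$. The boundary $\partial D''=F(\Gamma_j\times\{t\})$ is joined by its flow segment to $F(\Gamma_j\times\{s\})$. That segment lies in $\{\Phi\ge s\}$, so $F(\Gamma_j\times\{s\})$ is the boundary of the $s$-component containing $D''$, namely $\partial D$. This forces $j=i$ and $D''=D'$. Existence of at least one $t$-component inside $D$ uses only that $D'\subset D$. The main obstacle is the side-determination step, where one must check carefully that the flow direction points into $D$; the implicit function theorem, via $\nabla\Phi$ pointing toward increasing $\Phi$, handles it.
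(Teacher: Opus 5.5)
Your proposal is correct. The openness argument and the flow-box construction of the cylinders are the same as Steps~1--2 of the paper. The difference is in the nesting statement.

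\emph{The paper's route.} The paper proves nesting topologically. For $a\le s<t\le b$, it pushes each point of $\{s<\Phi<t\}$ along the normalized gradient flow up to level $t$. This gives a strong deformation retraction of $E_\Phi(s)$ onto $\{\Phi\ge t\}$, so the inclusion induces a bijection on connected components. Since $t$ is regular, the components of $\{\Phi\ge t\}$ are exactly the closures of the components of $E_\Phi(t)$, which finishes the proof.

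\emph{Your route.} You instead match boundary curves along the cylinders and invoke the bijection $D\mapsto\partial D$ of Lemma~\ref{le3-1}. A side-determination step then places the flow segment inside $D$. In the converse direction, this step implicitly uses that $\overline{D''}$ is a connected subset of $E_\Phi(s)$ meeting $D$, hence $\partial D''\subset D$. That is fine, but worth saying explicitly.

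\emph{What each approach buys.} The paper's argument is shorter and needs no side check. It also does not use the no-holes property coming from superharmonicity; it works for any $C^2$ function tending to $-\infty$ at infinity on a regular band. Yours depends on Lemma~\ref{le3-1}(3). In return, it identifies the descendant explicitly as the domain bounded by the level-$t$ curve on the same cylinder. That is exactly the form in which the correspondence is used later: the curve $z(s)\in\partial D(s)$ in Lemma~\ref{le3-4} and the consistent labeling of branches in Lemma~\ref{le4-3}.

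\emph{A small inaccuracy.} It is not true that $t<\M$ for every $t\in\mathscr R_\Phi$, because every $t>\M$ is vacuously a regular value. What holds is $\M\notin\mathscr R_\Phi$, since the maximum is attained at a critical point. Hence a connected $[a,b]\subset\mathscr R_\Phi$ lies either entirely below $\M$ or entirely above it. In the latter case all the sets involved are empty, so your reduction to the nonempty case still stands.
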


	\begin{proof}
    The proof is divided into three steps.

    {\it Step 1.} We first show that $\mathscr R_\Phi$ is open. Fix $t_0\in\mathscr R_\Phi$ and suppose, to the contrary, that $t_0$ is not an interior point of $\mathscr R_\Phi$. Then there exist critical points $\{x_k\}$ such that $\Phi(x_k)\to t_0$ as $k\to\infty$. Since $\Phi(x)\to-\infty$ as $|x|\to\infty$, the sequence $\{x_k\}$ is bounded. Passing to a subsequence, we may assume that $x_k\to x_0$ as $k\to \infty$. By continuity, we have
		\[
		\Phi(x_0)=t_0 \quad\text{and}\quad  \nabla\Phi(x_0)=0.
		\]
	This contradicts the fact that $t_0$ is a regular value.

        {\it Step 2.}
		Denote
		$\mathcal{K}:=\Phi^{-1}([a,b])$.
		If $\mathcal K=\varnothing$, there is nothing to prove. Otherwise, $\mathcal K$ is closed by continuity of $\Phi$ and bounded because $\Phi(x)\to-\infty$ as $|x|\to\infty$. Hence $\mathcal K$ is compact. Since $[a,b]\subset\mathscr R_\Phi$, we have
     \[
     \min_{\mathcal K}|\nabla\Phi|>0.
     \]
		Hence there exists an $\varepsilon>0$ such that for any $x\in \mathcal{K}_\varepsilon:=\left\{x:dist(x,\mathcal{K})<\varepsilon\right\}$, $\nabla\Psi(x)>0$. define
		\begin{equation}
        \left\{
        \begin{aligned}
		&\frac{d\Theta}{d\tau} (\tau, x)=\frac{\nabla\Phi}{|\nabla\Phi|^2}(\Theta(\tau, x)),\\
        & \Theta(0, x) =x\in \mathcal{K}.
        \end{aligned}
        \right.
        \end{equation}
		Clearly, one has
		\[
		\frac{\dd}{\dd\tau}\Phi(\Theta(\tau, x))=1.
		\]
		Compactness of $\mathcal{K}$ ensures that the flow is defined for all times needed to cross the strip $\mathcal{K}$. Consequently, the map $F(x,r):=\Theta(r-a, x)$ 
		is a $C^1$ diffeomorphism from $\Phi^{-1}(a)\times[a,b]$ to $\mathcal{K}$, whose inverse is 
		$F^{-1}(z)=\bigl(\Theta({a-\Phi(z)},z),\Phi(z)\bigr)$.
		Since $\Phi^{-1}(a)$ is a finite disjoint union of $C^2$ Jordan curves, $\mathcal{K}$ is a finite disjoint union of cylinders diffeomorphic to $\mathbb S^1\times[a,b]$.

         {\it Step 3.}
		Now fix $a\le s<t\le b$ and denote
			$F_\Phi(t):=\{\Phi\ge t\}$.
		For $x\in E_\Phi(s)$ and $\lambda\in[0,1]$, define
		\[
		\vartheta(\lambda, x):=
		\begin{cases}
			\Theta(\lambda(t-\Phi(x)), x),
			&s<\Phi(x)<t,\\[1mm]
			x,
			&\Phi(x)\ge t.
		\end{cases}
		\]
		If $s<\Phi(x)<t$, then
		\[
		\Phi(\vartheta(\lambda, x))=(1-\lambda)\Phi(x)+\lambda t.
		\]
		Thus $\vartheta$ is a strong deformation retraction of $E_\Phi(s)$ onto $F_\Phi(t)$. It follows that $E_\Phi(s)$ and $F_\Phi(t)$ have the same connected components. Since $t$ is a regular value, the components of $F_\Phi(t)$ are precisely the closures of the components of $E_\Phi(t)$. Hence the inclusion $E_\Phi(t)\subset E_\Phi(s)$ induces a bijection between their connected components. The proof is thus complete.
	\end{proof}

	\subsection{The component formula for $H$}
	
	Note that if $t\in \mathscr R_\Phi$ and $D\in\cC_\Phi(t)$, then $\rho$ and $H$ are constant on $\partial D$.  We denote the constants by
	\begin{equation}\label{3-4}
		c_D(t):=\rho|_{\partial D},\qquad h_D(t):=H|_{\partial D}.
	\end{equation}
	For $x\in E_\Phi(t)$, let $D_t(x)$ denote the connected component of $E_\Phi(t)$ containing $x$. If $ s_1<s_2<\Phi(x)$, then $D_{s_2}(x)\subset D_{s_1}(x)$.
	
	We next derive a one-dimensional representation of $H$ along each branch. We begin with two auxiliary results.
	\begin{lemma}\label{lem3-3}
		Let $a$ be a regular value, $D\in\cC_\Phi(a)$, and $x,y\in D$. Set
		\[
		m(x,y):=\min\{\Phi(x),\Phi(y)\}
		\]
		and
		\[
		\Lambda(x,y):= \{s\in(a,m(x,y)):D_s(x)=D_s(y)\}.
		\]
		Then $\Lambda(x,y)$ is nonempty and  if $s_2\in\Lambda(x,y)$ and $a<s_1<s_2$, then $s_1\in \Lambda(x,y)$. In particular,
		$
		\lambda(x,y):=\sup\Lambda(x,y)\in(a,m(x,y)]
		$
		is well defined. Moreover, every rectifiable curve $\Gamma:[0,1]\to D$ joining $x$ to $y$ satisfies
		\begin{equation}\label{3-5}
			\min_{0\le\tau\le1}\Phi(\Gamma(\tau))\le\lambda(x,y)
		\end{equation}
		and
		\begin{equation}\label{3-6}
			\Phi(x)+\Phi(y)-2\lambda(x,y) \le \operatorname{Var}(\Phi\circ\Gamma).
		\end{equation}
	\end{lemma}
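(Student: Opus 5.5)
Fix $a\in\mathscr R_\Phi$, $D\in\cC_\Phi(a)$ and $x,y\in D$. I will check the four assertions in order: nonemptiness, downward closedness, \eqref{3-5}, \eqref{3-6}.

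\emph{Nonemptiness of $\Lambda(x,y)$.} By Lemma~\ref{le3-2}, $\mathscr R_\Phi$ is open, so $[a,a+\varepsilon]\subset\mathscr R_\Phi$ for some $\varepsilon>0$. Shrink $\varepsilon$ so that also $a+\varepsilon<m(x,y)$. The last part of Lemma~\ref{le3-2} says $D$ contains exactly one component of $E_\Phi(s)$ for each $s\in(a,a+\varepsilon]$. Since $x,y\in D\cap E_\Phi(s)$ and components of $E_\Phi(s)$ lying in $D$ are components of $E_\Phi(s)$, we get $D_s(x)=D_s(y)$. Hence $(a,a+\varepsilon]\subset\Lambda(x,y)$.

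\emph{Downward closedness.} Let $s_2\in\Lambda(x,y)$ and $a<s_1<s_2$. Then $D_{s_2}(x)=D_{s_2}(y)$ is connected, lies in $E_\Phi(s_1)$, and contains both $x$ and $y$. So $x$ and $y$ lie in the same component of $E_\Phi(s_1)$, i.e.\ $s_1\in\Lambda(x,y)$. Thus $\Lambda(x,y)$ is an interval with left endpoint $a$, and $\lambda(x,y)\in(a,m(x,y)]$ is well defined.

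\emph{Inequality \eqref{3-5}.} Set $s_*:=\min_\tau\Phi(\Gamma(\tau))$. Since $\Gamma\subset D\subset E_\Phi(a)$, we have $s_*>a$, and $s_*\le m(x,y)$ because the endpoints are $x$ and $y$. If $s_*\le\lambda(x,y)$ we are done. Otherwise $\lambda(x,y)<s_*$. For every $s\in(a,s_*)$ the curve $\Gamma$ is a connected subset of $E_\Phi(s)$ joining $x$ and $y$, so $s\in\Lambda(x,y)$. Hence $\lambda(x,y)\ge s_*$, a contradiction. (If $s_*=m(x,y)$, the bound $\lambda\le m$ already gives equality.) Only connectedness of the continuous image is used here, so rectifiability is not needed for this step.

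\emph{Inequality \eqref{3-6}.} Let $\tau_*$ attain the minimum of $\Phi\circ\Gamma$. Then
\[
\operatorname{Var}(\Phi\circ\Gamma)\ge|\Phi(x)-\Phi(\Gamma(\tau_*))|+|\Phi(\Gamma(\tau_*))-\Phi(y)|=\Phi(x)+\Phi(y)-2s_*.
\]
By \eqref{3-5}, $s_*\le\lambda(x,y)$, which gives \eqref{3-6}. The variation may be infinite in general, but it is finite when $\Gamma$ is rectifiable since $\Phi$ is locally Lipschitz; in any case the inequality is trivial when the variation is infinite.

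The only genuinely delicate point is the nonemptiness step, which needs openness of $\mathscr R_\Phi$ and the component-correspondence of Lemma~\ref{le3-2}. Everything else is elementary connectedness.
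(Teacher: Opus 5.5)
Your proof is correct. The downward closedness, \eqref{3-5} and \eqref{3-6} are argued exactly as in the paper; the only difference is how you show that $\Lambda(x,y)$ is nonempty.

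You invoke openness of $\mathscr R_\Phi$ and the component correspondence of Lemma~\ref{le3-2}, which brings in the gradient-flow retraction and the regularity of $a$. The paper instead takes any continuous path $\Gamma_0\subset D$ from $x$ to $y$, which exists because $D$ is open and connected. By compactness, $\mu_0:=\min\Phi\circ\Gamma_0>a$. It then concludes $(a,\mu_0)\subset\Lambda(x,y)$, because $\Gamma_0$ is a connected subset of $E_\Phi(s)$ for each such $s$.

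This is precisely the argument you give yourself for \eqref{3-5}. So your closing assessment is reversed: nonemptiness is elementary and needs neither the regularity of $a$ nor Lemma~\ref{le3-2}.

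What your route buys is a path-independent collar. With $\varepsilon$ depending only on $a$, you get $(a,a+\varepsilon]\subset\Lambda(x,y)$ for every pair with $m(x,y)>a+\varepsilon$. Nothing later in the paper uses this uniformity.

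One wording slip in that step: the fact you actually need is that $D_s(x)$ and $D_s(y)$ lie in $D$. This holds because they are connected subsets of $E_\Phi(a)$ that meet $D$. Uniqueness of the component of $E_\Phi(s)$ inside $D$ then forces $D_s(x)=D_s(y)$.
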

	
	\begin{proof}
		Choose a continuous path $\Gamma_0\subset D$ from $x$ to $y$, and set 	$\mu_0:=\min_{0\le\tau\le1}\Phi(\Gamma_0(\tau)).$
		
		The compactness of $\Gamma_0([0,1])$ and the inclusion
		$\Gamma_0([0,1])\subset D\subset E_\Phi(a)$ imply that $\mu_0>a$. Hence one has
		$
		(a,\min\{\mu_0,m(x,y)\})\subset\Lambda(x,y),
		$
		Thus $\Lambda(x,y)$ is nonempty. If $s_2\in\Lambda(x,y)$ and $a<s_1<s_2$, the common component of $E_\Phi(s_2)$ containing $x$ and $y$ is a connected subset of $E_\Phi(s_1)$, and therefore $s_1\in\Lambda(x,y)$.
		
		Now let $\Gamma$ be a rectifiable curve from $x$ to $y$, and set $\varsigma:=\min_{0\le\tau\le1}\Phi(\Gamma(\tau)).$
		If $\varsigma>\lambda(x,y)$, then any $s\in(\lambda(x,y),\varsigma)$ satisfies $\Gamma\subset E_\Phi(s)$, contrary to the definition of $\lambda(x,y)$. Thus $\varsigma\le\lambda(x,y)$, which proves
		\eqref{3-5}.
		
		If $\tau_0$ is such that $\Phi(\Gamma(\tau_0))=\varsigma$, then
		\begin{align*}
			\operatorname{Var}(\Phi\circ\Gamma)
			&\ge \bigl|\Phi(x)-\varsigma\bigr|+\bigl|\Phi(y)-\varsigma\bigr|\\
			&=\Phi(x)+\Phi(y)-2\varsigma\\
			&\ge\Phi(x)+\Phi(y)-2\lambda(x,y),
		\end{align*}
		which is \eqref{3-6}. The proof is thus completed.
	\end{proof}

	\begin{lemma}\label{le3-4}
		Define
		\[
		\mathcal O:=\bigl\{(x,s)\in\R^2\times\R: s\in\mathscr R_\Phi,\ s<\Phi(x)\bigr\}
		\]
		and
		\begin{equation}\label{3-8}
			\mathfrak c(x,s):=
			\begin{cases}
				c_{D_s(x)}(s),&(x,s)\in\mathcal O,\\
				0,&(x,s)\notin\mathcal O.
			\end{cases}
		\end{equation}
		Then $\mathfrak c$ is continuous on $\mathcal O$ and Borel measurable
		on $\R^2\times\R$. In particular, for every fixed $x\in\R^2$, the map
		\[
		s\mapsto c_{D_s(x)}(s),
		\]
		extended by zero at critical values, is Borel measurable on $(-\infty,\Phi(x))$.
	\end{lemma}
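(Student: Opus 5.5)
We first prove that $\mathcal O$ is open in $\R^2\times\R$. Fix $(x_0,s_0)\in\mathcal O$. By Lemma~\ref{le3-2}, $\mathscr R_\Phi$ is open, so we may choose $\varepsilon>0$ with $[s_0-\varepsilon,s_0+\varepsilon]\subset\mathscr R_\Phi$ and $s_0+\varepsilon<\Phi(x_0)$. By continuity of $\Phi$, every $x$ near $x_0$ still satisfies $\Phi(x)>s_0+\varepsilon$. Hence a product neighbourhood of $(x_0,s_0)$ lies in $\mathcal O$.

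Next we prove continuity on $\mathcal O$ in this neighbourhood. Set $a:=s_0-\varepsilon$ and $b:=s_0+\varepsilon$. By Lemma~\ref{le3-2}, $\Phi^{-1}([a,b])$ is a finite union of cylinders $F(\gamma\times[a,b])$, where $\gamma$ runs over the Jordan curves of $\Phi^{-1}(a)$ and $F$ is the gradient-flow diffeomorphism. By the last assertion of Lemma~\ref{le3-2}, each component of $E_\Phi(a)$ contains exactly one component of $E_\Phi(s)$ for each $s\in[a,b]$. Combined with Lemma~\ref{le3-1}(3), this says the following. For $s\in[a,b]$, the boundary of $D_s(x)$ is the slice $F(\gamma\times\{s\})$, where $\gamma=\partial D_a(x)$. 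Moreover, $\gamma$ is the same for all $x$ near $x_0$.

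To see why $\gamma$ is locally constant in $x$, take $x$ near $x_0$ with the segment $[x_0,x]$ contained in $\{\Phi>b\}$, which is possible by continuity. Then $D_a(x)=D_a(x_0)$. Since $D_s(x)\subset D_a(x)$ for $s\ge a$, we get $D_s(x)=D_s(x_0)$ for all $s\in[a,b]$. Therefore, near $(x_0,s_0)$,
\[
\mathfrak c(x,s)=\rho\bigl(F(p,s)\bigr)
\]
for any fixed $p\in\gamma$. The right-hand side is continuous in $s$, because $F$ and $\rho$ are continuous, and it does not depend on $x$. Hence $\mathfrak c$ is continuous at $(x_0,s_0)$.

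Borel measurability then follows directly. The function $\mathfrak c$ equals a continuous function on the open set $\mathcal O$ and equals $0$ on its complement, so $\mathfrak c=\mathfrak c\1_{\mathcal O}$ is Borel. For fixed $x$, the slice $s\mapsto\mathfrak c(x,s)$ is the composition of $\mathfrak c$ with the continuous map $s\mapsto(x,s)$, hence Borel on $(-\infty,\Phi(x))$. On that interval it coincides with $c_{D_s(x)}(s)$ at regular values and with $0$ at critical ones.

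The main obstacle is the identification of $\partial D_s(x)$ as a flow slice of a \emph{fixed} curve $\gamma$, uniformly in $x$ near $x_0$; this is where the component-bijection statement of Lemma~\ref{le3-2} is essential.
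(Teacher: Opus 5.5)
Your proposal is correct and follows essentially the same route as the paper: openness of $\mathcal O$ from openness of $\mathscr R_\Phi$, local constancy of the component $D_s(x)$ in $x$ across a regular band, the representation of $\mathfrak c(x,s)$ as $\rho$ evaluated along a normalized gradient-flow line crossing that band, and Borel measurability from extending a function continuous on the open set $\mathcal O$ by zero. The differences are cosmetic: you obtain local constancy from a segment $[x_0,x]\subset\{\Phi>b\}$ and start the flow line on $\partial D_a(x_0)$, whereas the paper uses a ball $B\Subset D_{s_0}(x_0)$ with $s_0+\delta<\inf_{\overline B}\Phi$ and starts the curve $z(s)$ at a point of $\partial D(s_0)$.
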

	
	\begin{proof}
		Since $\mathscr R_\Phi$ is open, so is $\mathcal O$. Fix
		$(x_0,s_0)\in\mathcal O$, and choose a ball $B$ such that $x_0\in$
		$
		x_0\in B\Subset D_{s_0}(x_0).
		$
		After shrinking $\delta>0$, we may assume that
		\[
		J:=[s_0-\delta,s_0+\delta]\subset\mathscr R_\Phi \qquad\text{and}\qquad s_0+\delta<\inf_{\overline B}\Phi.
		\]
		Thus $B\subset E_\Phi(s)$ for every $s\in J$. By Lemma \ref{le3-2}, the component $D_{s_0}(x_0)$ extends
		uniquely to a nested family
		$
		D(s)\in\cC_\Phi(s)$, $s\in J,
		$
		and the component correspondence across the regular band implies that
		$
		B\subset D(s)\text{ for every }s\in J. 
		$
		In particular,
		$
		D_s(x)=D(s)\text{ for }x\in B,\ s\in J.
		$
        
		Choose $z_0\in\partial D(s_0)$. The normalized gradient flow used in the proof of Lemma \ref{le3-2} gives a $C^1$ curve $z:J\to\R^2$ such that
		$z(s)\in\partial D(s)$ and $ z(s_0)=z_0.$
		The fact that $\rho$ is constant on each regular level curve implies
		$c_{D(s)}(s)=\rho(z(s)), s\in J.$
		Hence, we have $\mathfrak c(x,s)=\rho(z(s))$ on $B\times(s_0-\delta,s_0+\delta)$, 
		which is continuous. Thus $\mathfrak c$ is continuous on $\mathcal O$. Its extension by zero to the complement of the open set $\mathcal O$ is therefore Borel measurable.
	\end{proof}
	
	We are now ready to give a one-dimensional representation of $H$ along each branch.

	\begin{proposition}\label{Pro3-5}
		Let $a$ be a regular value and let $D\in\cC_\Phi(a)$. Then, for every $x\in D$,
		\begin{equation}\label{3-15}
			H(x)-h_D(a)=\int_a^{\Phi(x)}c_{D_s(x)}(s)\,\dd s,
		\end{equation}
		where the integrand is extended by zero at critical values.
		
		In particular, let $a<b$ be regular values, and suppose that $D_b\subset D_a$, where $D_a\in\cC_\Phi(a)$ and $D_b\in\cC_\Phi(b)$. Then
		\begin{equation}\label{3-16}
			h_{D_b}(b)-h_{D_a}(a)=\int_a^b c_{D_s}(s)\,\dd s,
		\end{equation}
		where, for each regular value $s\in(a,b)$, $D_s$ is the unique connected component of $E_\Phi(s)$ containing $D_b$.
		
		Moreover,
		\begin{equation}\label{3-17}
			0\le h_{D_b}(b)-h_{D_a}(a) \le \|\rho\|_{L^\infty(D_a)}(b-a).
		\end{equation}
	\end{proposition}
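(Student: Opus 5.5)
The plan is to prove \eqref{3-15} by comparing $H$ with the right-hand side along curves that cross the nested components, first on regular bands and then across critical levels by approximation.

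\emph{Regular bands.} Let $J=[s_1,s_2]\subset\mathscr R_\Phi$ and let $D(s)$ be the nested family of components given by Lemma~\ref{le3-2}. Take the $C^1$ curve $z(s)\in\partial D(s)$ produced by the normalized gradient flow of Lemma~\ref{le3-2}. Since $\frac{\dd}{\dd s}\Phi(z(s))=1$, we get
\[
\frac{\dd}{\dd s}H(z(s))=\rho\nabla\Phi\cdot z'(s)=\rho(z(s))=c_{D(s)}(s).
\]
This gives $h_{D(s_2)}(s_2)-h_{D(s_1)}(s_1)=\int_{s_1}^{s_2}c_{D(s)}(s)\,\dd s$, which is \eqref{3-16} on regular bands.

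\emph{General case.} Fix $x\in D$ and set $g(s):=c_{D_s(x)}(s)$, extended by zero at critical values. By Lemma~\ref{le3-4}, $g$ is Borel and bounded by $\|\rho\|_{L^\infty(D)}$. Put $F(t):=h_{D_t(x)}(t)$ for regular $t\in(a,\Phi(x))$.

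\begin{enumerate}
\item \emph{Lipschitz bound for $F$.} For regular $a\le t<t'<\Phi(x)$, join a point of $\partial D_{t'}(x)$ to a point of $\partial D_t(x)$ by a curve. I would use the backward normalized gradient-type flow: run $\dot\gamma=\nabla\Phi$ (unnormalized, so it passes through critical levels) starting from a point of $\partial D_{t'}(x)$ backwards in time until it reaches level $t$. It stays inside $D_t(x)$, because it stays inside the closure of $D_{t}(x)$ by connectedness, and $\Phi$ is monotone along it. Then
\[
|H(\text{end})-H(\text{start})|=\int\rho|\nabla\Phi|^2\,\dd\tau\le\|\rho\|_\infty\,(t'-t),
\]
and since $\nabla H\cdot\dot\gamma\ge0$ this also gives the nonnegativity in \eqref{3-17}. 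One issue is that the backward flow might converge to a critical point before reaching level $t$. In that case I would instead concatenate short perturbed paths. Alternatively, I would choose the path inside $D_t(x)$ that minimizes $\operatorname{Var}(\Phi\circ\Gamma)$ up to $\varepsilon$ and use $|\nabla H|=\rho|\nabla\Phi|$ via the coarea inequality $\int_\Gamma\rho|\nabla\Phi|\le\|\rho\|_\infty\operatorname{Var}(\Phi\circ\Gamma)$, with Lemma~\ref{lem3-3} controlling the variation.

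\item \emph{Identifying $F$.} $F$ is Lipschitz on the regular values and extends continuously to the critical set, which has measure zero. On each open interval of regular values, $F'=g$ by the regular-band computation. A Lipschitz function with derivative $g$ almost everywhere satisfies $F(t)-F(a)=\int_a^tg$. Finally, letting regular $t\nearrow\Phi(x)$, we have $x\in\overline{D_t(x)}$ and the diameter of $D_t(x)$ stays bounded. We also need $H\to H(x)$ on $\partial D_t(x)$: the same variation estimate applied to a path from $x$ to $\partial D_t(x)$ inside $D_t(x)$ gives $|H(x)-F(t)|\le\|\rho\|_\infty(\Phi(x)-t)+o(1)$. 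This yields \eqref{3-15}.
\end{enumerate}

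Then \eqref{3-16} follows by applying \eqref{3-15} to any $x\in D_b$ with $\Phi(x)$ near $b$, or directly from the argument for $F$. The bound \eqref{3-17} follows from $0\le g\le\|\rho\|_{L^\infty(D_a)}$.

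\emph{Main obstacle.} The hard step is the Lipschitz control of $F$ across critical levels. Components may merge there, and a naive path between boundary curves can oscillate in $\Phi$. This is exactly what Lemma~\ref{lem3-3}'s variation bound \eqref{3-6} is designed for: it ensures that a path inside the nested branch can be chosen with $\operatorname{Var}(\Phi\circ\Gamma)$ close to the level difference. I would try to produce such a nearly monotone path via the gradient flow, perturbing it around critical points. Then $\int_\Gamma|\nabla H|\le\|\rho\|_\infty\operatorname{Var}(\Phi\circ\Gamma)$ gives absolute continuity of $F$.
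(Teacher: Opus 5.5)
There is a genuine gap, and it is at the step you flag as the main obstacle. Your argument needs $|F(t')-F(t)|\le\|\rho\|_\infty(t'-t)$ for \emph{all} regular $t<t'$. Lipschitz bounds on each interval of regular values are not enough: the critical values form a closed null set that need not be finite (possibly uncountable), so $F$ could still carry a Cantor-type singular part there.

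Along any rectifiable path, $H(\Gamma(1))-H(\Gamma(0))=\int\rho\,\dd(\Phi\circ\Gamma)$. So the bound you want requires, for each $\varepsilon>0$, a path from $\partial D_{t'}(x)$ to $\partial D_t(x)$ with $\operatorname{Var}(\Phi\circ\Gamma)\le t'-t+\varepsilon$, i.e.\ an almost $\Phi$-monotone path. (Your inequality $\int_\Gamma\rho|\nabla\Phi|\,\dd\cH^1\le\|\rho\|_\infty\operatorname{Var}(\Phi\circ\Gamma)$ is false as written, since $\operatorname{Var}(\Phi\circ\Gamma)\le\int_\Gamma|\nabla\Phi|\,\dd\cH^1$. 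The correct estimate is the chain-rule one, using $\nabla H\parallel\nabla\Phi$.)

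Lemma~\ref{lem3-3} does not supply such a path. The bound \eqref{3-6} is a \emph{lower} bound on the variation of \emph{every} path joining two points; it says nothing about the existence of a near-optimal one. The backward gradient flow can stall at a critical point above level $t$, for instance on a stable separatrix of a saddle. With possibly degenerate critical sets and infinitely many critical levels to cross, ``perturbing around critical points'' is not a procedure whose accumulated cost you control.

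The same missing ingredient reappears in your final step. As $t\nearrow\Phi(x)$ the sets $D_t(x)$ need not shrink to $x$. So $F(t)\to H(x)$ again needs either an almost monotone path from $x$ to $\partial D_t(x)$, or constancy of $H$ on critical-level continua, and you establish neither.

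The paper avoids constructing such paths by reversing the roles: it defines the candidate $F(x):=h_D(a)+\int_a^{\Phi(x)}c_{D_s(x)}(s)\,\dd s$ as a function of $x\in D$.
\begin{itemize}
\item Two points $x,y$ have the same integrand on $(a,\lambda(x,y))$, so $|F(x)-F(y)|\le M\bigl(\Phi(x)+\Phi(y)-2\lambda(x,y)\bigr)$.
\item Now \eqref{3-6} is used in its correct direction: this is at most $M\operatorname{Var}(\Phi\circ\Gamma)$ for \emph{any} path, in particular a segment. Hence $F$ is locally Lipschitz with $|\partial_iF|\le M|\partial_i\Phi|$.
\item Near points on regular levels, $\nabla F=\rho\nabla\Phi$. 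On $\Phi^{-1}(N)$ one has $\nabla\Phi=0$ a.e.\ because $|N|=0$, hence $\nabla F=0$ a.e.\ there.
\item Thus $\nabla(F-H)=0$ a.e.\ in $D$, and the common boundary value $h_D(a)$ gives $F=H$ everywhere in $D$, including on critical levels.
\end{itemize}
No monotone paths are needed.
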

	
	\begin{proof}
We divide the proof into four steps.
    
		{\it Step 1.} Set up. $M:=\|\rho\|_{L^\infty(D)}$. For $x\in D$ and $a<s<\Phi(x)$, write $c_x(s):=\mathfrak c(x,s)$. By Lemma \ref{le3-4}, $c_x$ is Borel measurable. Since every component $D_s(x)$ is contained in $D$, we have $0\le c_x(s)\le M$.
		Define
		\begin{equation}\label{3-18}
			F(x):=h_D(a)+\int_a^{\Phi(x)}c_x(s)\,\dd s,\qquad x\in D.
		\end{equation}
		The estimate
		\begin{equation}\label{3-19}
			|F(x)-h_D(a)|\le M\bigl(\Phi(x)-a\bigr)
		\end{equation}
		shows that $F$ extends continuously to $\partial D$ by setting $F=h_D(a)$ there.
		
		{\it Step 2.} Lipschitz estimate. Let $x,y\in D$ and put $\lambda:=\lambda(x,y)$ as in Lemma \ref{lem3-3}, we have
		$D_s(x)=D_s(y)$ $\text{for every regular }s\in(a,\lambda)$ and hence $c_x(s)=c_y(s)$ $\text{for a.e. }s\in(a,\lambda)$ by Sard's theorem.
		 Which implies
        \begin{equation*}
		\begin{aligned}
|F(x)-F(y)|&=\left|\int_\lambda^{\Phi(x)}c_x(s)\dd s-\int_\lambda^{\Phi(y)}c_y(s)\dd s\right|\\
        &\le M\bigl(\Phi(x)-\lambda\bigr) + M\bigl(\Phi(y)-\lambda\bigr).
	\end{aligned}
    \end{equation*}
		By Lemma \ref{lem3-3}, for every rectifiable curve $\Gamma\subset D$ joining $x$ to $y$, we have
		\begin{equation}\label{3-20}
			|F(x)-F(y)|\le M\operatorname{Var}(\Phi\circ\Gamma) \le M\int_\Gamma|\nabla\Phi|\,\dd s.
		\end{equation}
		Taking $\Gamma$ to be a line segment on balls compactly contained in $D$, we see that $F$ is locally Lipschitz.
        
        {\it Step 3.} Gradient estimate. Applying \eqref{3-20} to coordinate segments and passing to difference quotients yields
		\begin{equation}\label{3-21}
			|\partial_iF| \le M|\partial_i\Phi| \qquad\text{a.e. in }D.
		\end{equation}
		
		We next identify the gradient of $F$ at points lying on regular
		levels. Fix $x_0\in D$ such that $s_0:=\Phi(x_0)$ is a regular value.
		Choose $\eta>0$ so that
		\[
		a<s_0-2\eta, \qquad [s_0-2\eta,s_0+2\eta]\subset\mathscr R_\Phi.
		\]
		Let $Q$ be the component of 
		$\Phi^{-1}\bigl((s_0-2\eta,s_0+2\eta)\bigr)$
		containing $x_0$, and set
		$Q_0:=Q\cap\{s_0-\eta<\Phi<s_0+\eta\}.$
		By Lemma \ref{le3-2}, the level curves
		$\Gamma_r:=Q\cap\Phi^{-1}(r)$
		form a single branch, and
		$c(r):=\rho|_{\Gamma_r}$
		is continuous. Since $Q_0$ lies in one component of $E_\Phi(s_0-\eta)$, the part of \eqref{3-18} below $s_0-\eta$ is independent of $x\in Q_0$. Thus
		\[
		F(x)=C+\int_{s_0-\eta}^{\Phi(x)}c(r)\,\dd r, \qquad x\in Q_0,
		\]
		for some constant $C$, and hence
		\begin{equation}\label{3-22}
			\nabla F(x)=c(\Phi(x))\nabla\Phi(x)=\rho(x)\nabla\Phi(x), \qquad x\in Q_0.
		\end{equation}
		
		Let $N$ be the set of critical values of $\Phi$. Since $|N|=0$, we have $\nabla\Phi=0$ almost everywhere on $\Phi^{-1}(N)\cap D$; see \cite[Theorem 6.19]{Lieb2001}.
		By \eqref{3-21}, $\nabla F=0$ almost everywhere there as well. Combining this with \eqref{3-22}, yields
		\[
		\nabla F=\rho\nabla\Phi=\nabla H \qquad\text{a.e. in }D.
		\]
		Since $D$ is connected, $F-H$ is constant in $D$. Both $F$ and $H$ are continuous, and
		\[
		F(x),\,H(x)\to h_D(a)\qquad\text{as }x\to\partial D,\ x\in D,
		\]
		by \eqref{3-19} and \eqref{3-4}. Hence $F=H$ in $D$, proving \eqref{3-15}.
		
		Finally, let $D_b\subset D_a$ and choose $x\in\partial D_b$. Since
		$\Phi(x)=b>a$, we have $x\in D_a$. Applying
		\eqref{3-15} in $D_a$, we obtain
		\[
		h_{D_b}(b)-h_{D_a}(a)=\int_a^b c_{D_s}(s)\,\dd s.
		\]
		The bound \eqref{3-17} follows immediately from
		\[
		0\le c_{D_s}(s)\le\|\rho\|_{L^\infty(D_a)}.
		\]
		The proof is thus completed.
	\end{proof}
	
	Integrating the branchwise representation in Proposition \ref{Pro3-5} over the components of a superlevel set leads to the following important identity, which will be used in the integral estimates of the next section.
	
	\begin{corollary}\label{cor3-6}
		Let $t<\M$ be a regular value, and define
		\begin{equation}\label{3-23}
			I(t):=\sum_{D\in\cC_\Phi(t)}\int_D\bigl(H(x)-h_D(t)\bigr)\,\dd x.
		\end{equation}
		Then $I(t)\ge0$, and
		\begin{equation}\label{3-24}
			I(t)=\int_t^\M\sum_{D\in\cC_\Phi(s)}c_D(s)|D|\,\dd s,
		\end{equation}
		where the integrand on the right is understood to be zero at critical values.
	\end{corollary}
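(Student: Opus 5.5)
The plan is to apply Proposition~\ref{Pro3-5} pointwise inside each component and then exchange the order of integration by Tonelli's theorem. Fix a regular value $t<\M$ and $D\in\cC_\Phi(t)$. By Lemma~\ref{le3-1} there are finitely many such components, each bounded. For $x\in D$, \eqref{3-15} gives
\[
H(x)-h_D(t)=\int_t^{\Phi(x)}c_{D_s(x)}(s)\,\dd s\ \ge 0,
\]
because $\rho\ge0$. Summing over the finitely many components yields $I(t)\ge0$ immediately.

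For the identity, write the integrand as $\mathfrak c(x,s)\1_{\{t<s<\Phi(x)\}}$, with $\mathfrak c$ as in Lemma~\ref{le3-4}. That lemma shows the function is Borel measurable on $D\times(t,\M)$. It is also nonnegative and bounded by $\|\rho\|_{L^\infty}$, while $D\times(t,\M)$ has finite measure since $D$ is bounded. Tonelli's theorem therefore gives
\[
\int_D\bigl(H-h_D(t)\bigr)\dd x=\int_t^\M\int_{D\cap E_\Phi(s)}c_{D_s(x)}(s)\,\dd x\,\dd s .
\]

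Next, fix a regular value $s\in(t,\M)$. Critical values form a null set, so they do not affect the outer integral. The set $D\cap E_\Phi(s)$ is a union of components $D'\in\cC_\Phi(s)$ with $D'\subset D$. Indeed, each component of $E_\Phi(s)$ is connected and lies in $E_\Phi(t)$, hence lies in exactly one component of $E_\Phi(t)$. For $x\in D'$ we have $D_s(x)=D'$, so the integrand equals the constant $c_{D'}(s)$ there. The inner integral is therefore $\sum_{D'\in\cC_\Phi(s),\,D'\subset D}c_{D'}(s)|D'|$.

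Finally, sum over $D\in\cC_\Phi(t)$. Every $D'\in\cC_\Phi(s)$ is counted exactly once by the containment just noted, which yields \eqref{3-24}. The only delicate point is the measurability needed for Tonelli, and Lemma~\ref{le3-4} already supplies it.
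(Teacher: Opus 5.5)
Your proposal is correct and follows the paper's argument: Proposition~\ref{Pro3-5} gives the pointwise representation and hence nonnegativity, Tonelli's theorem (with measurability from Lemma~\ref{le3-4}) exchanges the integrals, and at each regular $s$ the inner integral reduces to $\sum_{D\in\cC_\Phi(s)}c_D(s)|D|$. The only cosmetic difference is that you apply Tonelli component by component and then regroup the components of $E_\Phi(s)$ under their ancestors, whereas the paper integrates over all of $E_\Phi(t)$ at once.
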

	
	\begin{proof}
		By Proposition \ref{Pro3-5}, for $D\in\cC_\Phi(t)$ and $x\in D$, we have
		\begin{equation*}
			H(x)-h_D(t)=\int_t^{\Phi(x)}\mathfrak c(x,s)\,\dd s.
		\end{equation*}
		Since $\rho\ge0$, the integrand is nonnegative, and hence $I(t)\ge0$. The function
		\[
		(x,s)\mapsto \mathfrak c(x,s)\1_{\{t<s<\Phi(x)\}}
		\]
		is Borel measurable and nonnegative. Summing over the components of $E_\Phi(t)$ and applying Tonelli's theorem, yield
		\begin{equation*}
			I(t)=\int_{E_\Phi(t)}\int_t^{\Phi(x)} \mathfrak c(x,s)\,\dd s\,\dd x=\int_t^\M\int_{E_\Phi(s)}\mathfrak c(x,s)\,\dd x\,\dd s.
		\end{equation*}
		For every regular value $s$, the function $\mathfrak c(\,\cdot\,,s)$ is equal to $c_D(s)$ on each component $D\in\cC_\Phi(s)$. Therefore,
		\[
		\int_{E_\Phi(s)}\mathfrak c(x,s)\,\dd x=\sum_{D\in\cC_\Phi(s)}c_D(s)|D|.
		\]
		At critical values, $\mathfrak c$ vanishes by definition. Substitution into the preceding identity yields \eqref{3-24}. This finishes the proof of the corollary.
	\end{proof}
	
	\section{The mass-square inequality}\label{s4}
	In this section, we establish a differential inequality for the mass-square function along regular level-set branches. Together with the integral representation in Corollary \ref{cor3-6}, this leads to the mass-square inequality underlying the rigidity argument.
	
	\subsection{Differentiation along a regular component}
	Let $J$ be an interval consisting entirely of regular values of $\Phi$, and let $\{D(s)\}_{s\in J}$ be one of the families of components provided by Lemma \ref{le3-2}. For $s\in J$, set
	\begin{equation}\label{4-0}
		a(s):=|D(s)|, \qquad m(s):=\int_{D(s)}\rho(x)\dd x,
		\qquad  c(s):=\rho|_{\partial D(s)}.
	\end{equation}
	The divergence theorem and the standard differentiation formulas for
	regular superlevel sets imply
	\begin{align}
		m(s)
		&=\int_{\partial D(s)}|\nabla\Phi|\,\dd\cH^1,\label{4-1}\\
		-a'(s) &=\int_{\partial D(s)}\frac{1}{|\nabla\Phi|}\,\dd\cH^1, \label{4-2}\\
		m'(s) &=c(s)a'(s),
		\label{4-3}
	\end{align}
	for every $s\in J$. Indeed, the outward unit normal to $D(s)$ is $-\nabla\Phi/|\nabla\Phi|$ on $\partial D(s)$, while \eqref{4-2} and \eqref{4-3} follow from the coarea formula.
	
	\begin{lemma}\label{le4-1}
		Along every regular component family, it holds that
		\begin{equation}\label{4-4}
			-\frac{\dd}{\dd s}m^2(s)\ge 8\pi c(s)a(s).
		\end{equation}
		If $c(s)>0$, equality holds if and only if $D(s)$ is a disk and $|\nabla\Phi|$ is constant on $\partial D(s)$.
	\end{lemma}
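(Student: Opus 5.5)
Differentiating $m^2$ with \eqref{4-3} gives
\[
-\frac{\dd}{\dd s}m^2(s)=-2m(s)m'(s)=2c(s)\,m(s)\,\bigl(-a'(s)\bigr),
\]
so it suffices, whenever $c(s)>0$, to prove
\[
m(s)\bigl(-a'(s)\bigr)\ge 4\pi a(s).
\]
If $c(s)=0$, the left side of \eqref{4-4} vanishes and so does the right side, so there is nothing to prove. The statement only concerns $c(s)>0$ for the equality characterization anyway.

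The required inequality comes from chaining Cauchy--Schwarz with the isoperimetric inequality. Let $L(s):=\cH^1(\partial D(s))$. By \eqref{4-1} and \eqref{4-2}, Cauchy--Schwarz on $\partial D(s)$ gives
\[
m(s)\bigl(-a'(s)\bigr)=\int_{\partial D(s)}|\nabla\Phi|\,\dd\cH^1\int_{\partial D(s)}\frac{\dd\cH^1}{|\nabla\Phi|}\ge L(s)^2 .
\]
By Lemma~\ref{le3-1}, $D(s)$ is a Jordan domain with $C^2$ boundary, so the planar isoperimetric inequality applies and yields $L(s)^2\ge 4\pi a(s)$. Combining the two bounds and multiplying by $2c(s)\ge0$ gives \eqref{4-4}.

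For the equality case with $c(s)>0$, equality in \eqref{4-4} forces equality in both steps, since each is an inequality between nonnegative quantities and the product is multiplied by a positive factor. Equality in Cauchy--Schwarz holds if and only if $|\nabla\Phi|$ and $1/|\nabla\Phi|$ are proportional on $\partial D(s)$. Since $|\nabla\Phi|>0$ is continuous there, this means $|\nabla\Phi|$ is constant. Equality in the isoperimetric inequality for a $C^2$ Jordan domain holds if and only if $D(s)$ is a disk. Conversely, if $D(s)$ is a disk and $|\nabla\Phi|$ is constant on its boundary, both steps become equalities, and so does \eqref{4-4}.

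The only point needing care is the validity of \eqref{4-1}--\eqref{4-3}, and that the family $D(s)$ has differentiable area and mass. These are taken from the preceding discussion, via the cylinder structure of Lemma~\ref{le3-2} and the coarea formula. Beyond that the argument is routine; the main step is recognizing the Cauchy--Schwarz plus isoperimetric chain.
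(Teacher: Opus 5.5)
Your proof is correct and follows essentially the same route as the paper: you use \eqref{4-3} to write $-\frac{d}{ds}m^2=2c\,m\,(-a')$, bound $m(-a')$ below by the squared perimeter via Cauchy--Schwarz with \eqref{4-1}--\eqref{4-2}, and then apply the planar isoperimetric inequality. The equality case when $c(s)>0$ comes, as in the paper, from requiring equality in both inequalities.
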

	
	\begin{proof}
		By \eqref{4-1}, \eqref{4-2} and the Cauchy--Schwarz inequality, we have
		\[
		m(s)(-a'(s))=\left(\int_{\partial D(s)}|\nabla\Phi|\,\dd\cH^1\right) \left(\int_{\partial D(s)}\frac1{|\nabla\Phi|}\,\dd\cH^1\right)
		\ge \cH^1(\partial D(s))^2.
		\]
		The planar isoperimetric inequality yields
		\[
		\cH^1(\partial D(s))^2\ge4\pi a(s).
		\]
		In view of \eqref{4-3}, we obtain
		\begin{equation}\label{dsm2}
		-\frac{\dd}{\dd s}m^2(s)=-2m(s)m'(s)= 2c(s)m(s)(-a'(s))
		\ge 8\pi c(s)a(s).
		\end{equation}
		If $c(s)>0$, equality in \eqref{dsm2} holds if and only if both the Cauchy--Schwarz and isoperimetric inequalities are corresponding equalities. Hence $|\nabla\Phi|$ is constant on $\partial D(s)$ and $D(s)$ is a disk. The converse is immediate. This finishes the proof of the lemma.
	\end{proof}
	
	\subsection{The mass-square function}
	For arbitrary $t<\M$, the open set $E_\Phi(t)$ has at most countably many components. Define
	\begin{equation}\label{4-7}
		S(t):=\sum_{D\in\cC_\Phi(t)}  \left(\int_D\rho(x)\dd x\right)^2.
	\end{equation}
	The series is finite because
	\[
	S(t)\le\left(\int_{E_\Phi(t)}\rho\right)^2<\infty.
	\]

	\begin{lemma}\label{le4-2}
		The function $S$ is nonincreasing, and
		\begin{equation}\label{4-8}
			\lim_{t\nearrow\M}S(t)=0.
		\end{equation}
	\end{lemma}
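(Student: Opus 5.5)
The plan has two parts: monotonicity from a superadditivity inequality, and the limit from the shrinking of $E_\Phi(t)$ together with $\rho\in L^\infty$.

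\emph{Monotonicity.} Fix $s<t<\M$. Since $E_\Phi(t)\subset E_\Phi(s)$, every component $D'\in\cC_\Phi(t)$ is connected and lies in a unique component $D\in\cC_\Phi(s)$. For each $D\in\cC_\Phi(s)$, the components of $E_\Phi(t)$ contained in $D$ are pairwise disjoint open subsets of $D$. Because $\rho\ge0$, their masses $m_{D'}:=\int_{D'}\rho$ satisfy
\[
\sum_{D'\subset D} m_{D'}\le \int_D\rho .
\]
For nonnegative numbers (possibly countably many, with the series convergent) we have $\sum m_{D'}^2\le\bigl(\sum m_{D'}\bigr)^2$. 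Hence
\[
\sum_{D'\subset D}\Bigl(\int_{D'}\rho\Bigr)^2\le\Bigl(\int_D\rho\Bigr)^2 .
\]
Summing over $D\in\cC_\Phi(s)$, and using that this grouping partitions $\cC_\Phi(t)$ (a rearrangement of nonnegative series), gives $S(t)\le S(s)$.

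\emph{Limit.} By the bound stated just after \eqref{4-7},
\[
0\le S(t)\le\Bigl(\int_{E_\Phi(t)}\rho\Bigr)^2\le \|\rho\|_{L^\infty}^2\,|E_\Phi(t)|^2 .
\]
It remains to show $|E_\Phi(t)|\to0$ as $t\nearrow\M$. The sets $E_\Phi(t)$ are bounded, since $\Phi\to-\infty$ at infinity, and they decrease as $t$ increases. Their intersection over $t<\M$ is $\{\Phi\ge\M\}=\{\Phi=\M\}$. By continuity of measure from above (the sets have finite measure), $|E_\Phi(t)|\to|\{\Phi=\M\}|$.

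The main obstacle is the possibility that $\{\Phi=\M\}$ has positive measure; everything else is routine. To exclude this, recall that $\Phi\in C^2$. On the set $\{\Phi=\M\}$ we have $\nabla\Phi=0$, and by the standard fact already used in the proof of Proposition \ref{Pro3-5} (\cite[Theorem 6.19]{Lieb2001}), applied to the Sobolev function $\partial_i\Phi$, the second derivatives vanish a.e. on $\{\nabla\Phi=0\}$. Thus $\rho=-\Delta\Phi=0$ a.e. on $\{\Phi=\M\}$. So even if this set has positive area, it carries no $\rho$-mass.

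Accordingly, I would redo the estimate with the mass in place of the area:
\[
\int_{E_\Phi(t)}\rho\;\longrightarrow\;\int_{\{\Phi=\M\}}\rho=0 ,
\]
by dominated convergence, with dominating function $\rho\,\1_{E_\Phi(t_0)}\in L^1$ for any fixed $t_0<\M$. Since $S(t)\le\bigl(\int_{E_\Phi(t)}\rho\bigr)^2$, this yields \eqref{4-8}. This argument does not require knowing that $\{\Phi=\M\}$ is null.
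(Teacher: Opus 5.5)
Your proof is correct and follows essentially the same route as the paper: monotonicity from the same ancestor/descendant superadditivity ($\sum m_{D'}^2\le(\sum m_{D'})^2\le(\int_D\rho)^2$), and the limit from $S(t)\le\bigl(\int_{E_\Phi(t)}\rho\bigr)^2$ together with $\rho=-\Delta\Phi=0$ a.e.\ on $\{\Phi=M_\Phi\}$. Your explicit dominating function and the application of \cite[Theorem 6.19]{Lieb2001} to $\partial_i\Phi$ fill in details the paper leaves implicit, and the initial area-based detour can simply be dropped.
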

	
	\begin{proof}
		Let $s<t$. Each component of $E_\Phi(t)$ lies in a unique ancestor component of $E_\Phi(s)$. Fix an ancestor of mass $M$, and let $m_1,m_2,\dots$ be the masses of its descendants at level $t$.  The descendants are disjoint subsets of the ancestor, so $\sum_jm_j\le M$, and hence
		\[
		\sum_jm_j^2\le\left(\sum_jm_j\right)^2\le M^2.
		\]
		Summing over all ancestors gives $S(t)\le S(s)$. The sets $E_\Phi(t)$ decrease as $t$ increasingly converge to
		$K:=\{\Phi=\M\}$.
		At every point of $K$, $\nabla\Phi=0$, and hence $\rho=-\Delta\Phi=0$ almost everywhere on $K$. It follows that
		\[
		\lim_{t\nearrow M_\Phi}\int_{E_\Phi(t)}\rho\dd x=\int_K\rho\dd x=0.
		\]
		Since $S(t)\le(\int_{E_\Phi(t)}\rho)^2$, \eqref{4-8} follows. The proof is thus completed.
	\end{proof}

	For each regular value $t$, define
	\begin{equation}\label{4-9}
		A(t):=\sum_{D\in\cC_\Phi(t)}c_D(t)|D|,
	\end{equation}
	and set $A(t):=0$ at critical values. By the definition of $\mathfrak c$, we have
	\begin{equation}\label{4-10}
		A(t)=\int_{\R^2}\mathfrak c(x,t)\,\dd x.
	\end{equation}
	It follows from Lemma \ref{le3-4} that $A$ is Borel measurable. Moreover, Corollary \ref{cor3-6} gives
	\[ I(t)=\int_t^\M A(s)\,\dd s<\infty \] for every regular value $t<\M$. Since regular values are dense, it follows that  $A\in L^1_{\loc}((-\infty,\M))$.

	\begin{lemma}\label{le4-3}
		Let $J\subset\mathscr R_\Phi$ be an open interval. Then
		\begin{equation}\label{4-11}
			S\in C^1(J),\ \ \ A\in C(J),\ \ \ -S'(t)\ge8\pi A(t) \quad\text{for every }t\in J.
		\end{equation}
	\end{lemma}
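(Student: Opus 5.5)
The plan is to reduce everything to a finite sum of smooth branch functions and then apply Lemma~\ref{le4-1} branch by branch. Fix $t_0\in J$ and choose a compact interval $[\alpha,\beta]\subset J$ whose interior contains $t_0$. By Lemma~\ref{le3-2}, $\Phi^{-1}([\alpha,\beta])$ is a finite disjoint union of cylinders. Each component of $E_\Phi(\alpha)$ contains exactly one component of $E_\Phi(s)$ for every $s\in[\alpha,\beta]$. So there are finitely many regular component families $\{D_i(s)\}_{s\in[\alpha,\beta]}$, $i=1,\dots,N$, with $N$ independent of $s$, and
\[
S(s)=\sum_{i=1}^N m_i(s)^2,\qquad A(s)=\sum_{i=1}^N c_i(s)a_i(s),\qquad s\in[\alpha,\beta].
\]
Here $m_i,a_i,c_i$ are the quantities defined in \eqref{4-0} for the $i$-th family. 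The whole claim therefore reduces to regularity of each $m_i$, $a_i$, $c_i$ on the open interval $(\alpha,\beta)$.

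For the regularity I will use the diffeomorphism $F$ from the proof of Lemma~\ref{le3-2}, together with the coarea formula. First, $c_i(s)=\rho(z_i(s))$, where $z_i$ is the $C^1$ curve along the normalized gradient flow (as in the proof of Lemma~\ref{le3-4}). Hence $c_i$ is continuous, indeed $C^1$ since $\rho\in C^1$.

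Second, for $s\in[\alpha,\beta]$ the coarea formula gives
\[
a_i(s)=a_i(\beta)+\int_s^\beta\int_{\partial D_i(r)}\frac{\dd\cH^1}{|\nabla\Phi|}\,\dd r.
\]
The inner integral depends continuously on $r$: pull it back via $F$ to the fixed curve $\Phi^{-1}(\alpha)\cap\partial D_i(\alpha)$, where $|\nabla\Phi|$ is bounded below on the compact band and the Jacobian of $F$ is continuous. Thus $a_i\in C^1$ and \eqref{4-2} holds.

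Similarly $m_i(s)=m_i(\beta)+\int_s^\beta c_i(r)\int_{\partial D_i(r)}|\nabla\Phi|^{-1}\dd\cH^1\,\dd r$, using that $\rho=c_i(r)$ on $\partial D_i(r)$. So $m_i\in C^1$ with $m_i'=c_ia_i'$, which is \eqref{4-3}.

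Consequently $S\in C^1((\alpha,\beta))$ and $A\in C((\alpha,\beta))$. Summing \eqref{4-4} from Lemma~\ref{le4-1} over $i$ gives $-S'(t)\ge8\pi A(t)$ there. Since $t_0\in J$ was arbitrary, \eqref{4-11} holds on all of $J$.

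The main obstacle is the continuity of the boundary integrals $r\mapsto\int_{\partial D_i(r)}|\nabla\Phi|^{\pm1}\dd\cH^1$. Lemma~\ref{le4-1} tacitly assumed \eqref{4-1}--\eqref{4-3}; here they must actually be justified uniformly on the band. I will handle this by parametrizing $\partial D_i(r)=\Theta(r-\alpha,\gamma_i)$ for a fixed $C^2$ Jordan curve $\gamma_i$. The flow $\Theta$ is $C^1$ in $(\tau,x)$ because $\nabla\Phi/|\nabla\Phi|^2$ is $C^1$ near the compact band. Hence the pulled-back integrands and line elements are jointly continuous, and dominated convergence yields the required continuity.
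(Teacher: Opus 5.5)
Your proposal is correct and takes essentially the same route as the paper. You label the finitely many branches consistently via Lemma~\ref{le3-2}, write $S=\sum_i m_i^2$ and $A=\sum_i c_i a_i$, get the branch regularity from \eqref{4-1}--\eqref{4-3}, and sum \eqref{4-4}. The differences are that you localize to compact subintervals $[\alpha,\beta]\subset J$, which matches the hypothesis of Lemma~\ref{le3-2}, and that you justify the $C^1$ regularity of $a_i$ and $m_i$ explicitly through the flow parametrization and the coarea formula, which the paper simply cites.
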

	
	\begin{proof}
		By Lemma \ref{le3-2}, the components of $E_\Phi(t)$ can be labeled
		consistently on $J$ as\\
		$D_1(t)$,...,$D_N(t)$.
		For each branch, write
		\[
		a_j(t):=|D_j(t)|, \qquad m_j(t):=\int_{D_j(t)}\rho(x)\,\dd x,
		\qquad c_j(t):=\rho|_{\partial D_j(t)}.
		\]
		By \eqref{4-1}--\eqref{4-3}, we have $a_j,m_j\in C^1(J)$ and $c_j\in C(J)$. Consequently,
		\[
		S(t)=\sum_{j=1}^N m^2_j(t) \qquad\text{and}\qquad A(t)=\sum_{j=1}^N c_j(t)a_j(t)
		\]
		belong to $C^1(J)$ and $C(J)$, respectively. In view of \eqref{4-4}, we get
		\[
		-S'(t)=-\sum_{j=1}^N\frac{\dd}{\dd t}m^2_j(t)
		\ge 8\pi\sum_{j=1}^Nc_j(t)a_j(t)
		=8\pi A(t).
		\]
		The proof of the lemma is thus completed.
	\end{proof}

	The next proposition extends the differential inequality in \eqref{4-11} across critical levels by interpreting the variation of $S$ as a Stieltjes measure and recording the resulting defect in a nonnegative Radon measure.
	\begin{proposition}\label{Pro4-4}
		After replacing $S$ at its discontinuity points by its right-continuous representative, there is a nonnegative Radon measure $\mu$ on $(-\infty,\M)$ such that
		\begin{equation}\label{4-13}
			-\dd S=8\pi A(t)\dd t+\mu.
		\end{equation}
		For every regular value $t$,
		\begin{equation}\label{4-14}
			\delta(t):=S(t)-8\pi I(t)=\mu((t,\M))\ge0.
		\end{equation}
		Thus $\delta$ has a nonincreasing representative.
	\end{proposition}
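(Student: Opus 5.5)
Since $S$ is nonincreasing (Lemma~\ref{le4-2}) and bounded on $(-\infty,\M)$, we take its right-continuous representative $\tilde S$, which agrees with $S$ off a countable set. Then $-\dd\tilde S$ is a nonnegative Radon measure $\nu$ on $(-\infty,\M)$ with $\nu((s,t])=\tilde S(s)-\tilde S(t)$. We define
\[
\mu:=\nu-8\pi A(t)\,\dd t .
\]
This is a signed Radon measure because $A\in L^1_{\loc}((-\infty,\M))$. The real content is to prove $\mu\ge0$. Once that is done, \eqref{4-13} holds by construction.

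To get $\mu\ge0$, decompose $(-\infty,\M)$ into the open set $\mathscr R_\Phi\cap(-\infty,\M)$ and the closed critical set $N$, which has Lebesgue measure zero by Sard. On each open interval $J\subset\mathscr R_\Phi$, Lemma~\ref{le4-3} gives $S\in C^1(J)$, so $\tilde S=S$ on $J$, $\nu\llcorner J=-S'\,\dd t$ and $\mu\llcorner J=(-S'-8\pi A)\,\dd t\ge0$. On $N$ the Lebesgue part vanishes, so $\mu\llcorner N=\nu\llcorner N\ge0$. Since $\mathscr R_\Phi$ is a countable disjoint union of open intervals, $\mu$ is nonnegative on every Borel set.

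For \eqref{4-14}, fix a regular $t$ and let $T\nearrow\M$. Then $\mu((t,T])=\tilde S(t)-\tilde S(T)-8\pi\int_t^T A\,\dd s$. Here $\tilde S(t)=S(t)$ because $t$ is regular and $S$ is continuous near $t$. By \eqref{4-8}, $\tilde S(T)\to0$. Corollary~\ref{cor3-6} gives $\int_t^T A\to I(t)$ by monotone convergence, since $A\ge0$. Monotone convergence of measures then yields $\mu((t,\M))=S(t)-8\pi I(t)=\delta(t)\ge0$. Since $t\mapsto\mu((t,\M))$ is nonincreasing and defined for all $t$, it is the desired nonincreasing representative of $\delta$; regular values are dense.

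The main subtlety is the passage from the regular bands to the whole line. The measure $\nu$ may have a singular continuous part on the possibly uncountable critical set $N$, besides the atoms coming from splitting. The argument above avoids any fine analysis there: $N$ is Lebesgue-null, so $A\,\dd t$ gives it no mass and nonnegativity there is automatic. What has to be checked carefully is that the right-continuous modification does not disturb the identity on regular bands, and this holds since $S$ is already continuous there.
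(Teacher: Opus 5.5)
Your proof is correct and follows the paper's argument essentially step for step. You use the same Stieltjes measure $\nu=-\dd S$ of the right-continuous representative and the same definition $\mu:=\nu-8\pi A(t)\,\dd t$, with nonnegativity checked separately on the regular bands (via Lemma~\ref{le4-3}) and on the Lebesgue-null critical set. You also make the same passage $T\nearrow\M$ using \eqref{4-8} and Corollary~\ref{cor3-6}; you subtract the $A$-term before taking the limit and the paper after, which is immaterial.

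One harmless slip: $S$ need not be bounded on all of $(-\infty,\M)$. In the rotating case $\rho\ge b>0$ forces $S(t)\to\infty$ as $t\to-\infty$. However, since $S$ is finite and nonincreasing it is locally bounded, and that is all the construction of the Radon measure $\nu$ requires.
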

	
	\begin{proof}
	 For a nonincreasing right-continuous function $S$, let
		$\nu:=-DS$
		be its distributional Stieltjes measure.  Then, for $\alpha<\beta$, we have
		\begin{equation}\label{4-15}
			\nu((\alpha,\beta])=S(\alpha)-S(\beta),\qquad \nu(\{t\})=S(t^-)-S(t).
		\end{equation}
		At a continuity point, $S(t)=S(t^-)=S(t^+)$. The original geometric function \eqref{4-7} is continuous at every regular value by Lemma \ref{le4-3}, so passing to the right-continuous representative changes no value used on a regular level.
		
		By Lemma \ref{le4-3}, on each connected component of $\mathscr R_\Phi$, we have
		\[
		\nu=-S'(t)\dd t
		\ \ \ \text{and}\ \ \
		-S'(t)\ge8\pi A(t).
		\]
		Consequently, for every Borel set $B\subset\mathscr R_\Phi$, it holds that
		\[
		\nu(B)=\int_B-S'(t)\dd t \ge8\pi\int_BA(t)\dd t.
		\]
		Since the critical-value set has Lebesgue measure zero and $A=0$ there, the measure
		\begin{equation}\label{4-16}
			\mu:=\nu-8\pi A(t)\dd t
		\end{equation}
		is nonnegative on every Borel set: on the regular set this follows from the preceding inequality, and on its complement only the nonnegative measure $\nu$ remains. This proves \eqref{4-13}.
		
		Let $t$ be regular, and choose regular values $\left\{b_k\right\}$ which increasingly converge to $\M$. Since both $t$ and $b_k$ are continuity points of $S$, it follows from \eqref{4-15} that
		$\nu((t,b_k])=S(t)-S(b_k)$.
		The intervals $(t,b_k]$ increase to $(t,\M)$. Hence, by \eqref{4-8} and continuity from below of $\nu$, we obtain
		\begin{equation}\label{4-17}
			\nu((t,\M))=S(t).
		\end{equation}
		It follows from the measure inequality $\nu\ge8\pi A(t)\dd t$ that
		\[
		8\pi\int_t^\M A(s)\dd s\le S(t).
		\]
		In view of Corollary \ref{cor3-6}, we have
		\begin{equation}\label{4-18-0}
			I(t)=\int_t^\M A(s)\dd s.
		\end{equation}
		Combining \eqref{4-16}, \eqref{4-17} and \eqref{4-18-0}, we get \eqref{4-14}. The function $t\mapsto\mu((t,\M))$ is obviously nonincreasing. The proof is thus complete.
	\end{proof}
	
	\subsection{Pohozaev identity}\label{s3-3}
	We next establish a Pohozaev-type identity, analogous to the classical one for semilinear elliptic equations.
	\begin{proposition}\label{Pro4-5}
		For every $t\in \mathscr R_\Phi$, we have
		\begin{equation}\label{4-18}
			I(t)=\frac14\sum_{D\in\cC_\Phi(t)} \int_{\partial D}(x\cdot n)|\nabla\Phi|^2\dd\cH^1.
		\end{equation}
	\end{proposition}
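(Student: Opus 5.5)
Fix a regular value $t$ and one component $D\in\cC_\Phi(t)$; by Lemma \ref{le3-1}, $D$ is a $C^2$ Jordan domain with $\Phi=t$, $\rho=c_D(t)$ and $H=h_D(t)$ on $\partial D$, and with outward normal $n=-\nabla\Phi/|\nabla\Phi|$. The plan is to prove the identity one component at a time, in the form
\[
\int_D\bigl(H-h_D(t)\bigr)\dd x=\frac14\int_{\partial D}(x\cdot n)|\nabla\Phi|^2\dd\cH^1,
\]
and then sum over the finitely many components. The approach is the classical Pohozaev computation: test $-\Delta\Phi=\rho$ against the dilation field $x\cdot\nabla\Phi$, with $\rho\nabla\Phi$ replaced by $\nabla H$. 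This replacement is what removes the need for a global relation $\rho=f(\Phi)$.

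\emph{Step 1 (right-hand side).} Set $\tilde H:=H-h_D(t)$, which vanishes on $\partial D$. Then
\[
\int_D\rho\,(x\cdot\nabla\Phi)\dd x=\int_D x\cdot\nabla\tilde H\dd x=\int_{\partial D}(x\cdot n)\tilde H\dd\cH^1-2\int_D\tilde H\dd x=-2\int_D\tilde H\dd x.
\]
This step is legitimate because $H\in C^1(\overline D)$.

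\emph{Step 2 (left-hand side).} Compute $\int_D(-\Delta\Phi)(x\cdot\nabla\Phi)\dd x$ using the Rellich identity
\[
\Delta\Phi\,(x\cdot\nabla\Phi)=\Div\Bigl(\nabla\Phi\,(x\cdot\nabla\Phi)-\tfrac12 x|\nabla\Phi|^2\Bigr),
\]
whose divergence-free correction term vanishes in dimension two. This is valid for $\Phi\in C^2$. On $\partial D$ we have $\nabla\Phi=-|\nabla\Phi|n$, so $(\nabla\Phi\cdot n)(x\cdot\nabla\Phi)=|\nabla\Phi|^2(x\cdot n)$. The boundary flux is therefore $\frac12\int_{\partial D}(x\cdot n)|\nabla\Phi|^2$, which gives
\[
\int_D(-\Delta\Phi)(x\cdot\nabla\Phi)\dd x=-\frac12\int_{\partial D}(x\cdot n)|\nabla\Phi|^2\dd\cH^1.
\]

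\emph{Step 3 (conclusion).} Equating Steps 1 and 2 gives $2\int_D\tilde H\dd x=\frac12\int_{\partial D}(x\cdot n)|\nabla\Phi|^2\dd\cH^1$, which is the componentwise identity. Summing over $D\in\cC_\Phi(t)$ and using the definition \eqref{3-23} of $I(t)$ proves \eqref{4-18}.

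The only point needing care is regularity. Since $\Phi\in C^2(\R^2)$ and $\partial D$ is $C^2$, the divergence theorem applies directly to the $C^1$ vector field in Step 2, so no approximation argument is required. This identity is a purely local boundary statement; the genuinely hard analysis, namely passing to $t\to-\infty$, comes later in the paper.
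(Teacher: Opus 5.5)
Your proof is correct and is essentially the paper's argument: the paper integrates $\Div(Tx)=2(H-h)$ over $D$ with $T=\nabla\Phi\otimes\nabla\Phi-\frac12|\nabla\Phi|^2I+(H-h)I$, and $Tx$ is exactly your Rellich field $\nabla\Phi\,(x\cdot\nabla\Phi)-\frac12 x|\nabla\Phi|^2$ plus the field $\tilde H x$ from your Step 1, so you have split the same divergence identity into two integrations by parts. One wording correction: the term that drops out in two dimensions is the zeroth-order term $(1-\frac{d}{2})|\nabla\Phi|^2$ (with $d$ the dimension), not a ``divergence-free correction''.
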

	
	\begin{proof}
		Fix $D\in\cC_\Phi(t)$, and write $h=h_D(t)$. 
		 The straightforward calculations give
		\[
		\Div\left(\nabla\Phi\otimes\nabla\Phi-\frac12|\nabla\Phi|^2I\right)=(\Delta\Phi)\nabla\Phi.
		\]
        We introduce the following stress--energy tensor, which is the local
		tensorial form underlying the Rellich--Pohozaev identity:
		\[
		T=\nabla\Phi\otimes\nabla\Phi -\frac12|\nabla\Phi|^2I+(H-h)I.
		\]
		Using $\Delta\Phi=-\rho$ and $\nabla H=\rho\nabla\Phi$, one obtains $\Div T=0$.  In dimension two, we have
		\[
		\operatorname{tr}T=2(H-h).
		\]
		Hence
		\[
		\Div(Tx)=2(H-h).
		\]
		Integrating over $D$, we get
		\[
		2\int_D(H-h)\dd x=\int_{\partial D}(Tx)\cdot n\dd\cH^1.
		\]
		On $\partial D$, $H-h=0$ and $\nabla\Phi=-|\nabla\Phi|n$, so
		\[
		(Tx)\cdot n=\frac12(x\cdot n)|\nabla\Phi|^2,
		\]
		from which \eqref{4-18} follows. The proof is thus completed.
	\end{proof}
	
	\section{Rigidity via the vanishing of $\mu$}\label{s5}
	In this section, we prove that the vanishing of $\mu$ forces both $\Phi$ and $\rho$ to be radially symmetric.

	\subsection{Connectedness of regular superlevels} We begin by showing that
	$E_\Phi(t)$ is connected for every regular value $t<\M$. We first note that, for every nonempty component $D$ of $E_\Phi(t)$, it holds that
	\begin{equation}\label{5-1}
		\int_D\rho(x)\,\dd x>0.
	\end{equation}
	Indeed, otherwise $\rho\equiv0$ in $D$, and $\Phi$ would be harmonic there, contradicting the maximum principle since $\Phi=t$ on $\partial D$ and $\Phi>t$ in $D$.
	
	\begin{lemma}\label{le5-1}
		Let $t_*<\M$. Assume that there exist regular values $\left\{s_k\right\}$ and $\left\{r_k\right\}$ satisfying
		\begin{equation}\label{rbj}
		\lim_{k\to\infty}s_k=\lim_{k\to\infty}r_k= t_*, \text{ and}\quad s_k<s_{k+1}<t_*<r_{k+1}<r_k.
	    \end{equation}
		Suppose that
		$M_k\in\cC(s_k)$,  $D_{1,k},D_{2,k}\in\cC(r_k)$, and $D_{1,k}\ne D_{2,k}$,
		such that
		$D_{1,k}\cup D_{2,k}\subset M_k$.
		Assume moreover that
		$M_{k+1}\subset M_k$, $D_{i,k}\subset D_{i,k+1}$ for $i=1,2$,
		and that
		$\inf_k\int_{D_{i,k}}\rho\,\dd x>0$, $i=1,2$.
		Then $-\dd S$ has an atom at $t_*$. More precisely, if $m_i:=\lim_{k\to\infty}\int_{D_{i,k}}\rho\,\dd x>0$,
		then
		\begin{equation}\label{5-2}
			(-\dd S)(\{t_*\})\ge2m_1m_2 .
		\end{equation}
	\end{lemma}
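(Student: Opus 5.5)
We bound the atom of $-\dd S$ at $t_*$ by comparing $S$ slightly below and slightly above $t_*$. Since $S$ is nonincreasing (Lemma~\ref{le4-2}) and the regular values $s_k,r_k$ are continuity points of $S$ (Lemma~\ref{le4-3}), the right-continuous representative satisfies
\[
(-\dd S)(\{t_*\})=S(t_*^-)-S(t_*)\ \ge\ \lim_{k\to\infty}\bigl(S(s_k)-S(r_k)\bigr)-\lim_{k\to\infty}\bigl(S(t_*)-S(r_k)\bigr).
\]
The second limit vanishes because $r_k\searrow t_*$ and $S$ is right-continuous. So it suffices to show
\[
\liminf_{k\to\infty}\bigl(S(s_k)-S(r_k)\bigr)\ge 2m_1m_2 .
\]
In fact, using only the squared masses of the ancestor and its two distinguished descendants, we aim for the lower bound $2\,m(D_{1,k})m(D_{2,k})$ at every $k$, where $m(D):=\int_D\rho\,\dd x$.

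For the per-$k$ bound we repeat the ancestor–descendant argument from Lemma~\ref{le4-2}, keeping track of the cross term. Every component of $E_\Phi(r_k)$ lies in a unique component of $E_\Phi(s_k)$. For each ancestor $M\in\cC_\Phi(s_k)$ with descendants of masses $m_j$, we have $\sum_j m_j\le m(M)$, hence
\[
m(M)^2-\sum_j m_j^2\ \ge\ \Bigl(\sum_j m_j\Bigr)^2-\sum_j m_j^2=\sum_{i\neq j}m_im_j\ \ge 0 .
\]
For the particular ancestor $M_k$, the sum contains the two distinct descendants $D_{1,k}\neq D_{2,k}$, so the cross term is at least $2\,m(D_{1,k})m(D_{2,k})$. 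Summing over all ancestors, and using that every other ancestor contributes nonnegatively, gives
\[
S(s_k)-S(r_k)\ge 2\,m(D_{1,k})\,m(D_{2,k}).
\]

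It remains to pass to the limit. The sets $D_{i,k}$ increase in $k$, so their masses $m(D_{i,k})$ are nondecreasing. They are bounded by $\int_{M_1}\rho<\infty$, since $M_1$ is bounded and $\rho\in L^\infty$. Hence the limits $m_i$ exist and are positive by the $\inf$ hypothesis. Letting $k\to\infty$ in the per-$k$ bound then yields \eqref{5-2}.

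The main subtlety is the bookkeeping of $S$ at the critical level $t_*$, which we handle via the Stieltjes relation \eqref{4-15} and right-continuity. This is also why the nesting $M_{k+1}\subset M_k$ is not needed for the inequality itself: it only guarantees that the split happens in a single branch. If $\lim_k S(s_k)$ has to be identified with $S(t_*^-)$, we use monotonicity of $S$ and $s_k\nearrow t_*$, which gives $S(s_k)\searrow S(t_*^-)$.
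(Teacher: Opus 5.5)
Your proof is correct and follows essentially the same route as the paper. Both arguments obtain the per-level bound $S(s_k)-S(r_k)\ge 2\,m(D_{1,k})\,m(D_{2,k})$ from the ancestor--descendant comparison of Lemma~\ref{le4-2}, then pass to the limit using $S(s_k)\to S(t_*^-)$ and $S(r_k)\to S(t_*)$ for the right-continuous representative.
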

	
	\begin{proof}
		By the nestedness and $\rho\ge0$, the limits defining $m_i$ exist.
		For each $k$, let
		$\left\{m_{i,k}\right\}$
		be the masses of the descendant components of $E_{r_k}$ contained in $M_k$, with the first two corresponding to $D_{1,k}$ and $D_{2,k}$. Define
		$\mathfrak{m}_k(\rho):=\int_{M_k}\rho\,\dd x$,
		we have
		\[
		\sum_j m_{j,k}\le \mathfrak{m}_k(\rho) .
		\]
		Using the same component comparison as in the proof of Lemma \ref{le4-2}, yields
        \begin{equation}\label{dSS1}
		\begin{aligned}
			S(s_k)-S(r_k)
			&\ge (\mathfrak{m}_k(\rho))^2-\sum_jm_{j,k}^2 \\
			&\ge
			\left(\sum_jm_{j,k}\right)^2-\sum_jm_{j,k}^2 
			=2\sum_{i<j}m_{i,k}m_{j,k} \ge2m_{1,k}m_{2,k}.
		\end{aligned}
         \end{equation}
		The right-continuous representative of $S$ satisfies
		\[
		S(s_k)\to S(t_*^-), \qquad S(r_k)\to S(t_*).
		\]
		Passing to the limit in \eqref{dSS1}, we obtain
		\[
		S(t_*^-)-S(t_*)\ge2m_1m_2.
		\]
		It follows from the definition of the Stieltjes measure that one has
		\[
		S(t_*^-)-S(t_*)=(-\dd S)(\{t_*\}),
		\]
		which proves \eqref{5-2}. The proof of the lemma is completed.
	\end{proof}
	
	\begin{proposition}\label{Pro5-2}
		If $\mu=0$, then $E_\Phi(t)$ is connected for every regular value $t<\M$.
	\end{proposition}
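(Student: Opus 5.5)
We argue by contradiction. Suppose $t_0<\M$ is a regular value for which $E_\Phi(t_0)$ has at least two components $D_1\ne D_2$. The idea is to locate the level $t_*<t_0$ at which these two branches merge, and then apply Lemma~\ref{le5-1} there. That lemma produces an atom of $-\dd S$ at $t_*$. Since $-\dd S=8\pi A\,\dd t+\mu$ by Proposition~\ref{Pro4-4} and the absolutely continuous part has no atoms, the atom must belong to $\mu$, which contradicts $\mu=0$.

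\textbf{Step 1: the merging level.} Pick $x\in D_1$ and $y\in D_2$. Because $\Phi\to-\infty$ at infinity, some very negative regular value $a$ has the property that $x$ and $y$ lie in one component of $E_\Phi(a)$. Concretely, join $x$ to $y$ by a segment and take $a$ regular and below the minimum of $\Phi$ on that segment; such $a$ exists because regular values are dense. Set $t_*:=\lambda(x,y)$ as in Lemma~\ref{lem3-3}, computed in the component of $E_\Phi(a)$ containing $x$ and $y$. We need $t_*\le t_0$. This holds because $D_{t_0}(x)=D_1\ne D_2=D_{t_0}(y)$, and $\Lambda(x,y)$ is downward closed, so $t_0\notin\Lambda(x,y)$.

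We next show $t_*$ is a critical value. By Lemma~\ref{le3-2}, $\mathscr R_\Phi$ is open, and on a regular band the component correspondence is a bijection. So if $t_*$ were regular, $D_s(x)=D_s(y)$ for $s$ slightly below $t_*$ would propagate to $s$ slightly above $t_*$, contradicting the definition of the supremum. The same argument rules out $t_*=t_0$, so $t_*<t_0$ and $t_*$ is critical.

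\textbf{Step 2: the sequences for Lemma~\ref{le5-1}.} Since the critical values have measure zero, we can choose regular values $s_k\nearrow t_*$ with $s_k>a$ and $s_k<t_*$, so that $s_k\in\Lambda(x,y)$. We also choose regular values $r_k\searrow t_*$ with $r_k<t_0$.

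Set $M_k:=D_{s_k}(x)=D_{s_k}(y)$. Then $M_{k+1}\subset M_k$, since the components at a higher level sit inside those at a lower level.

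Set $D_{1,k}:=D_{r_k}(x)$ and $D_{2,k}:=D_{r_k}(y)$. These are distinct, because $r_k>t_*$ means $r_k\notin\Lambda(x,y)$. They are nested increasingly, $D_{i,k}\subset D_{i,k+1}$, because $r_{k+1}<r_k$. Both are contained in $M_k$, since $r_k>s_k$ and they contain points of $M_k$.

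Finally, we need the masses bounded below. Each $D_{i,k}$ contains $D_{t_0}(x)$ or $D_{t_0}(y)$, that is, $D_1$ or $D_2$. By \eqref{5-1}, both of these have positive $\rho$-mass. Hence
\[
\inf_k\int_{D_{i,k}}\rho\,\dd x\ge\int_{D_i}\rho\,\dd x>0 .
\]
This gives $m_1,m_2>0$.

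\textbf{Step 3: conclusion.} Lemma~\ref{le5-1} now yields $(-\dd S)(\{t_*\})\ge 2m_1m_2>0$. The measure $8\pi A(t)\,\dd t$ is absolutely continuous with respect to Lebesgue measure, because $A\in L^1_{\loc}$, so it gives no mass to the point $\{t_*\}$. Therefore
\[
\mu(\{t_*\})=(-\dd S)(\{t_*\})>0,
\]
which contradicts $\mu=0$.

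The only delicate point is Step 1: showing that the merging level $t_*$ is a genuine critical level lying strictly below $t_0$, and that the components at levels $r_k$ just above $t_*$ really are different. Both facts come from the downward closedness of $\Lambda$ in Lemma~\ref{lem3-3} together with the regular-band bijection in Lemma~\ref{le3-2}, so we do not expect a serious obstacle.
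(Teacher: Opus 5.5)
Your proof is correct and follows the paper's argument essentially step for step. You locate the merging level $t_*$ as $\lambda(x,y)$ from Lemma~\ref{lem3-3}, whereas the paper takes the supremum over $[a,t]$; these coincide by downward closedness. You then show via Lemma~\ref{le3-2} that $t_*$ is critical and strictly below $t_0$, and apply Lemma~\ref{le5-1} with masses bounded below by \eqref{5-1}. Your explicit observation that $8\pi A\,\dd t$ carries no atoms is exactly the step the paper leaves implicit when it writes $\mu(\{t_*\})=(-\dd S)(\{t_*\})$.
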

	
	\begin{proof}
		Suppose, to the contrary, that $E_\Phi(t)$ is disconnected for some regular value $t<\M$. Choose distinct components $D_t^1,D_t^2\in\cC_\Phi(t)$ and points $x_i\in D_t^i$, $i=1,2$. Let $\Gamma$ be a compact rectifiable
		curve joining $x_1$ to $x_2$. Since regular values are dense, we may choose a regular value
		$a<\min_{x\in\Gamma}\Phi(x)$.
		Then $\Gamma\subset E_a$, and hence
		$D_a(x_1)=D_a(x_2)$.
		Define 
		\begin{equation}\label{5-3}
			t_*:=\sup\{s\in[a,t]:D_s(x_1)=D_s(x_2)\}.
		\end{equation}
		The set on the right is nonempty. Moreover, $t_*>a$, because $\Gamma\subset E_\Phi(s)$ for all $s>a$ sufficiently close to
		$a$. Since $t$ is regular and $x_1,x_2$ lie in distinct components of $E_\Phi(t)$, Lemma \ref{le3-2} shows that they remain separated on a neighborhood of $t$; thus $t_*<t$.
		
		The value $t_*$ must be critical. Indeed, if it were regular, the component correspondence of Lemma \ref{le3-2} would preserve, across
		a neighborhood of $t_*$, whether $x_1$ and $x_2$ belong to the same component. This contradicts the definition of $t_*$.
		
		Choose regular values $\left\{s_k\right\}$ and $\left\{r_k\right\}$ satisfying \eqref{rbj} for $t_*$ defined in \eqref{5-3} and set
		\begin{equation}\label{5-5}
			M_k:=D_{s_k}(x_1)=D_{s_k}(x_2), \qquad D_{i,k}:=D_{r_k}(x_i), \quad i=1,2.
		\end{equation}
		Then $D_{1,k}$ and $D_{2,k}$ are distinct components contained in
		$M_k$, and the nesting of superlevel components gives
		\[
		M_{k+1}\subset M_k, \qquad D_{i,k}\subset D_{i,k+1}, \quad i=1,2.
		\]
		Since $r_k<t$, we also have $D_t^i\subset D_{i,k}$. Therefore,
		it follows from \eqref{5-1} that
		\begin{equation}\label{5-6}
			\inf_k\int_{D_{i,k}}\rho(x)\,\dd x \ge \int_{D_t^i}\rho(x)\,\dd x>0, \qquad i=1,2.
		\end{equation}
		Using Lemma \ref{le5-1}, yields
			$\mu(\{t_*\})=(-\dd S)(\{t_*\})>0$.
		This contradicts $\mu=0$. The proof of the proposition is thus completed.
	\end{proof}
	
	Combining Proposition \ref{Pro5-2} with Lemma \ref{le3-1}, we conclude that, if $\mu=0$, then $\Phi^{-1}(t)$ is a single $C^2$ Jordan curve for every regular value
	$t<\M$.
	
	\subsection{A semilinear relation}
	We now show that, when $\mu=0$, there exists a global functional relation between $\rho$ and $\Phi$: namely, $\rho=f(\Phi)$ for some Borel function $f$ defined on the range of $\Phi$. Consequently,
	\eqref{3-1} reduces to a standard semilinear elliptic equation.
	
	\begin{proposition}\label{Pro5-3}
		If $\mu=0$, then there exists a bounded Borel function
		$f:(-\infty,\M]\to[0,\infty)$
		such that
		\begin{equation}\label{5-7}
			\rho(x)=f(\Phi(x)) \qquad\text{for a.e. }x\in\R^2.
		\end{equation}
	\end{proposition}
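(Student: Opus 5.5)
The plan is to define $f$ level by level, using the fact that regular levels carry a single curve once $\mu=0$, and then to dispose of the critical levels separately. By Proposition~\ref{Pro5-2} and Lemma~\ref{le3-1}, for every regular value $t<\M$ the level set $\Phi^{-1}(t)$ is a single $C^2$ Jordan curve, namely $\partial E_\Phi(t)$. The transport relation $\nabla^\perp\Phi\cdot\nabla\rho=0$ says that $\rho$ is constant along this curve, and this constant is exactly $c_{E_\Phi(t)}(t)$. I would therefore set
\[
f(t):=c_{E_\Phi(t)}(t)\quad\text{for } t\in\mathscr R_\Phi,\ t<\M,\qquad f(t):=0\quad\text{otherwise.}
\]

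For measurability, the component $D_t(x)$ is the whole set $E_\Phi(t)$ for every $x$ with $\Phi(x)>t$. Hence $f(t)=\mathfrak c(x,t)$ for any such $x$, and $f$ does not depend on the choice of $x$. I would then show $f$ is Borel. On the open set $\mathscr R_\Phi\cap(-\infty,\M)$, $f$ is continuous: near a regular $t_0$, pick $x$ with $\Phi(x)>t_0+\delta$ and apply the continuity part of Lemma~\ref{le3-4}. Elsewhere $f$ is zero. Boundedness follows from $0\le f\le\|\rho\|_{L^\infty}$, since $f$ takes values of $\rho$ or zero.

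For the identity \eqref{5-7}, take first any $x$ with $\Phi(x)\in\mathscr R_\Phi$. Then $x$ lies on the single curve $\Phi^{-1}(\Phi(x))$, so $\rho(x)=f(\Phi(x))$ exactly. It remains to treat the set $Z:=\Phi^{-1}(N)$, where $N$ is the set of critical values, together with the maximum level $\{\Phi=\M\}$. Since $|N|=0$, the fact already cited in the proof of Proposition~\ref{Pro3-5} (\cite[Theorem 6.19]{Lieb2001}) gives $\nabla\Phi=0$ a.e.\ on $Z$. Because $\Phi\in C^2$, applying the same theorem to each $\partial_i\Phi\in C^1$ gives $D^2\Phi=0$ a.e.\ on $\{\nabla\Phi=0\}$. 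In particular $\rho=-\Delta\Phi=0$ a.e.\ on $Z$. The level $\{\Phi=\M\}$ lies in $Z$, since every point there is a critical point, so it is covered by the same argument. Hence $\rho=0=f(\Phi)$ a.e.\ on $Z$, and \eqref{5-7} holds a.e.

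The main point I expect to require care is this a.e.\ vanishing of $\rho$ on critical levels, which is where the $C^2$ regularity of $\Phi$ is needed. The well-definedness of $f$ is immediate from the connectedness supplied by Proposition~\ref{Pro5-2}, which is the only place where $\mu=0$ enters.
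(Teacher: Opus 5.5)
Your proposal is correct and follows essentially the same route as the paper. You define $f$ as the constant value of $\rho$ on the single regular level curve supplied by Proposition~\ref{Pro5-2}, set $f=0$ at critical values, and use that $\rho=0$ a.e.\ on $\Phi^{-1}(N)$; your two extra justifications (the level-set theorem applied to $\Phi$ and then to each $\partial_i\Phi$, and Lemma~\ref{le3-4} for Borel measurability) only fill in details the paper leaves implicit.
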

	
	\begin{proof}
		For every regular value $t$, $\Phi^{-1}(t)$ is a single $C^2$ Jordan curve. It follows from \eqref{3-1} that
		the function $\rho$ is constant on $\Phi^{-1}(t)$. Let us denote this value by $c(t)$. Let $N:=(-\infty,\M]\setminus\mathscr R_\Phi$ be the set of critical values, and define
		\begin{equation}\label{5-8}
			f(t):=
			\begin{cases}
				c(t),&t\in\mathscr R_\Phi,\\
				0,&t\in N.
			\end{cases}
		\end{equation}
		On each connected component of $\mathscr R_\Phi$, $c$ is continuous. Hence $f$ is Borel measurable, and satisfies
		\[
		0\le f\le\|\rho\|_{L^\infty}.
		\]
		By construction, \eqref{5-7} holds whenever $\Phi(x)$ is a regular value. On $\Phi^{-1}(N)$, we have $\rho=0$ almost everywhere, while $f(\Phi)=0$ by \eqref{5-8}.
		Therefore \eqref{5-7} holds almost everywhere in $\R^2$. This finishes the proof of the proposition.
	\end{proof}

	\subsection{Radial symmetry}
	
	\begin{lemma}\label{le5-5}
		If $\mu=0$, then for almost every regular value $t$ with $f(t)>0$, the set $E_\Phi(t)$ is a disk and $|\nabla\Phi|$ is constant on $\partial E_\Phi(t)$.
	\end{lemma}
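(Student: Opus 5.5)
The plan is to combine the measure identity of Proposition~\ref{Pro4-4} with the equality case of Lemma~\ref{le4-1}. By Proposition~\ref{Pro5-2}, when $\mu=0$ every regular superlevel set $E_\Phi(t)$ with $t<\M$ is a single Jordan domain. For $t\in\mathscr R_\Phi$ we therefore have $S(t)=m(t)^2$ and $A(t)=f(t)|E_\Phi(t)|=c(t)a(t)$, where $m,a,c$ are as in \eqref{4-0} for the unique branch.

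First I fix a connected component $J$ of the open set $\mathscr R_\Phi$; it is an open interval. By Lemma~\ref{le4-3}, on $J$ the measure $-\dd S$ equals $-S'(t)\,\dd t$ with $S\in C^1(J)$. Since $\mu=0$, the identity \eqref{4-13} restricted to $J$ gives
\[
-S'(t)\,\dd t=8\pi A(t)\,\dd t\quad\text{on }J.
\]
Both sides are continuous on $J$ by Lemma~\ref{le4-3}, so $-S'(t)=8\pi A(t)$ for every $t\in J$. This is an everywhere statement on $J$, not merely an almost-everywhere one. Hence equality holds in \eqref{4-4} at every regular value $t$, that is,
\[
-\frac{\dd}{\dd t}m^2(t)=8\pi c(t)a(t).
\]

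Next I apply the equality characterization in Lemma~\ref{le4-1}. At any regular $t$ with $c(t)=f(t)>0$, equality forces equality in both the Cauchy--Schwarz and the isoperimetric inequality. So $D(t)=E_\Phi(t)$ is a disk, and $|\nabla\Phi|$ is constant on $\partial E_\Phi(t)$. This gives the claim for every such regular value, which is stronger than for almost every one; the ``almost every'' in the statement simply absorbs the null set of critical values.

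The only delicate point is the passage from the measure identity to a pointwise identity. It rests on two facts: $\mu$ restricted to $\mathscr R_\Phi$ is $\bigl(-S'-8\pi A\bigr)\dd t$, and this density is continuous and nonnegative on each interval of regular values. Both follow from the construction \eqref{4-16} in the proof of Proposition~\ref{Pro4-4} together with Lemma~\ref{le4-3}. A continuous nonnegative density whose integral over an open interval vanishes must vanish identically there.
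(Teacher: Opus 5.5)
Your proof is correct and follows the paper's argument. Connectedness from Proposition~\ref{Pro5-2} gives $S=m^2$ and $A=f\,a$. The vanishing of $\mu$ in \eqref{4-13} then forces equality in \eqref{4-4}, and the equality case of Lemma~\ref{le4-1} yields the disk and the constant $|\nabla\Phi|$.

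The only difference is that you use the continuity of $S'$ and $A$ on intervals of regular values (Lemma~\ref{le4-3}) to upgrade the paper's almost-everywhere equality to every regular value. This is a harmless strengthening, since the paper only needs the a.e.\ version to select the sequence $t_k$ in Proposition~\ref{Pro5-6}. One small misreading: the ``almost every'' in the statement already ranges over regular values, so it is not absorbing the critical values; your stronger claim implies the statement regardless.
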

	
	\begin{proof}
		By Proposition \ref{Pro5-2}, $E_\Phi(t)$ is connected for every regular value $t$. Recall that
		\[ a(t)=|E_\Phi(t)|, \qquad m(t)=\int_{E_\Phi(t)}\rho, \qquad c(t)=f(t). \]
		Hence one has
		\begin{equation}\label{5-9}
			S(t)=m^2(t)
			\quad\text{and}\quad
			A(t)=f(t)a(t)
		\end{equation}
		at every regular value. Since $\mu=0$, the decomposition \eqref{4-13} implies that, for almost every regular $t$,
		\begin{equation}\label{5-10}
			-\frac{\dd}{\dd t}m^2(t)=8\pi f(t)a(t).
		\end{equation}
		Thus equality holds in \eqref{4-4}. Whenever $f(t)>0$, its equality characterization shows that $E_\Phi(t)$ is a disk and $|\nabla\Phi|$ is constant on $\partial E_\Phi(t)$. The proof of the lemma is completed.
	\end{proof}
	
	Note that the nonlinearity $f$ obtained in Proposition~\ref{Pro5-3} is merely bounded, nonnegative, and Borel measurable. We therefore need a symmetry theorem that remains valid when the nonlinearity has such low regularity. The corresponding theory originates in the pioneering work of Lions \cite{Lions1981}, who established the two-dimensional case and conjectured its validity in arbitrary dimensions. Serra \cite{Serra2013} subsequently proved higher-dimensional versions under an additional structural assumption on the nonlinearity, and the conjecture was recently resolved by Zhan \cite{Zhan2026}. Quasilinear extensions can be found in \cite{Kesavan1994,Serra2013,Zhan2026}.
	
	For completeness, we record below the precise planar result needed here. It is essentially contained in \cite{Lions1981}, and also follows from the case $p=n=2$ of \cite[Theorem~1]{Serra2013}.

	\begin{proposition}\label{Li}
		Let $B\subset\R^2$ be a disk, and let $G\colon[0,\infty)\to[0,\infty)$ be a locally bounded Borel function. Suppose that $u\in C^1(B)\cap C(\overline B)$ is a nonnegative weak
		solution of
		$$
		\begin{cases}
			-\Delta u=G(u) & \text{in } B,\\
			u=0 & \text{on } \partial B.
		\end{cases}
		$$
		Then $u$ is radially symmetric about the center of $B$ and is nonincreasing as a function of the radial variable.
	\end{proposition}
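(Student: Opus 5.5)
We will prove Proposition~\ref{Li} by running, inside the disk $B$, the same level-set argument developed in Sections~\ref{s3}--\ref{s5}. This is essentially Lions' original proof. Without loss of generality $B=B_R(0)$. If $u\equiv0$ there is nothing to prove, so assume $u\not\equiv 0$. The maximum principle, applied since $-\Delta u=G(u)\ge0$, gives $u>0$ in $B$. For $0<t<M:=\max u$ set $E(t)=\{u>t\}\Subset B$, with area $a(t)=|E(t)|$ and mass
\[
m(t)=\int_{E(t)}G(u)\,\dd x .
\]
Since $G(u)$ is a function of $u$ alone, the whole superlevel set plays the role of a single "component". No branching bookkeeping is needed, and $S(t)=m(t)^2$.

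\textbf{Step 1 (regularity).} Because $G$ is locally bounded and $u$ is bounded, we have $\Delta u\in L^\infty$, hence $u\in W^{2,p}_{\loc}$ and $u\in C^{1,\alpha}$. By the coarea formula,
\[
a(t)=|\{u>t,\ \nabla u=0\}|+\int_t^M\int_{\{u=s\}}|\nabla u|^{-1}\,\dd\cH^1\,\dd s .
\]
On $\{\nabla u=0\}$ we have $\Delta u=0$ a.e., so $G(u)=0$ there. Thus the mass carried by the critical set vanishes.

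\textbf{Step 2 (inequality).} For a.e.\ $t$, the level set $\{u=t\}$ is a rectifiable set on which $\nabla u\neq0$ $\cH^1$-a.e.; this follows from Sard/coarea for $W^{2,p}$ functions. On such levels we have:
\begin{itemize}
\item[(i)] $m(t)=\int_{\{u=t\}}|\nabla u|$, by the divergence theorem on $E(t)$;
\item[(ii)] $-m'(t)=G(t)\int_{\{u=t\}}|\nabla u|^{-1}$;
\item[(iii)] Cauchy--Schwarz together with the isoperimetric inequality, as in Lemma~\ref{le4-1}.
\end{itemize}
Note that in (ii), $m$ is absolutely continuous precisely because the critical set carries no $G(u)$-mass by Step 1. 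Combining these gives
\[
-\frac{\dd}{\dd t}m^2\ \ge\ 8\pi G(t)\,\mu_u(t),
\]
where $\mu_u(t)=|E(t)|$ is the distribution function.

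\textbf{Step 3 (Pohozaev equality).} Let $\mathcal G'=G$, with $\mathcal G(0)=0$. The Pohozaev identity on the whole disk $B$ yields
\[
2\int_B\mathcal G(u)\,\dd x=\frac12\int_{\partial B}(x\cdot n)|\partial_nu|^2 .
\]
Its right-hand side equals $\frac R2\int_{\partial B}|\partial_n u|^2\ge \frac{1}{4\pi}m(0)^2$, by Cauchy--Schwarz on a circle. This is exactly the disk case of Proposition~\ref{Pro4-5}, and equality in it is where the disk hypothesis enters. On the other hand, integrating the inequality of Step 2 from $0$ to $M$ gives
\[
m(0)^2\ \ge\ 8\pi\int_0^M G(t)|E(t)|\,\dd t=8\pi\int_B\mathcal G(u)\,\dd x .
\]
Comparing the two forces equality in Step 2 for a.e.\ $t$ with $G(t)>0$. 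Justifying Pohozaev for $u\in W^{2,p}\cap C^1(\overline B)$ needs a regularization near $\partial B$, which we handle by interior approximation on $B_{R-\epsilon}$; this approximation is routine.

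\textbf{Step 4 (conclusion).} Equality means that $E(t)$ is a disk and $|\nabla u|$ is constant on $\partial E(t)$, for a.e.\ $t$ with $G(t)>0$. Mass positivity (cf.\ \eqref{5-1}) shows that every $E(t)$ contains a set where $G(u)>0$. Between consecutive levels where $G=0$ on an interval, $u$ is harmonic in the annular region; combining nestedness with constancy of $u$ on the bounding circles, that region is a concentric annulus with a radial harmonic profile. Concentricity of the disks follows because $u$ is constant on their boundaries, which also gives monotonicity.

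The main obstacle is this last step: showing that the centers of the disks $E(t)$ coincide. The approach is to compare the inner and outer boundary conditions via the constant $|\nabla u|$ together with unique continuation/Cauchy data, or alternatively to use the limit $t\to0$, where $E(t)\to B$ forces all centers to $0$. This uses the fact that $u-t$ on $E(t)$ solves the same problem; we would then apply the argument recursively to get that $u$ depends only on $|x|$ and is nonincreasing in $|x|$.
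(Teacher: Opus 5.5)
The paper does not prove this proposition; it quotes Lions \cite{Lions1981} and the case $p=n=2$ of \cite[Theorem~1]{Serra2013}. Your Steps 1--3 correctly reproduce Lions' mechanism, which is the same one the paper uses in Lemma~\ref{le4-1} and Proposition~\ref{Pro4-5}. Coarea, Cauchy--Schwarz and the isoperimetric inequality give $m(0)^2\ge 8\pi\int_B\mathcal G(u)$. Pohozaev on the disk (with $x\cdot n\equiv R$) plus Cauchy--Schwarz on $\partial B$ gives the reverse inequality. Hence, for a.e.\ $t$ with $G(t)>0$, $E(t)$ is a disk $B_{r(t)}(c(t))$ and $|\nabla u|$ is constant on its boundary. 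One small point: Pohozaev needs $u\in W^{2,p}(B)\cap C^1(\overline B)$ globally. This does follow from $u\in C(\overline B)$, $u|_{\partial B}=0$ and $\Delta u\in L^\infty$, but your Step 1 only records interior regularity.

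The genuine gap is Step 4. Concentricity of these disks is exactly the content of the cited theorem, and none of the mechanisms you offer delivers it.

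\emph{Constant $u$ on nested circles is not enough.} Constancy of $u$ on two nested circles, with $u$ harmonic in between, does not force them to be concentric. A M\"obius map sends any eccentric annulus onto a round one, and the pullback of $\log|z|$ is harmonic, constant on both circles, and has non-concentric (Apollonian) level circles. What rules this out is that $|\nabla u|$ is constant on one bounding circle, followed by Cauchy data and unique continuation. That constancy is available only as a limit of good levels (by continuity of $\nabla u$), or from the boundary Cauchy--Schwarz equality when the circle is $\partial B$.

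\emph{The limit and the recursion are too weak.} Letting $t\to0$ only gives $c(t)\to0$. Re-running the argument on $u-t$ in $E(t)$ yields at most continuity of $t\mapsto c(t)$, never constancy.

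\emph{Gaps alone cannot conclude.} Most seriously, $\{G>0\}$ may contain no interval at all (a fat Cantor set, say). Then $u$ is not harmonic between two good levels, and no gap-by-gap argument can finish. You need a local argument, for instance the following chain:
\begin{itemize}
\item On a good circle, mass positivity gives $|\nabla u|=m(t)/(2\pi r(t))\ge m(s_1)/(2\pi R)>0$ for $t\le s_1<\max u$.
\item By $C^1$-continuity this lower bound survives at every level in the essential closure of $\{G>0\}$. It excludes critical points and plateaus there, so all those levels are circles with constant $|\nabla u|$.
\item The complementary gaps are then handled by the Cauchy-data argument above.
\item For two nearby level circles, each carrying constant $|\nabla u|$, the normal distance between them is $(t'-t)/|\nabla u|+o(t'-t)$ uniformly. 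This forces $|c(t')-c(t)|=o(t'-t)$, hence $c$ is constant.
\end{itemize}
Alternatively, as in Serra, show equality in P\'olya--Szeg\H{o} and absence of flat zones of $u^*$ below $\max u$, and invoke Brothers--Ziemer. Without some such argument, your proposal shows only that almost every good superlevel set is a disk, not that $u$ is radial.
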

	
	We are now in a position to establish the radial symmetry of both $\Phi$ and $\rho$ under the assumption that $\mu=0$.
	\begin{proposition}\label{Pro5-6}
		If $\mu=0$, then there exists a point $a\in\R^2$ such that both $\Phi$ and $\rho$ are radially symmetric with respect to $a$.
	\end{proposition}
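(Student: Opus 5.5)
We will combine Proposition~\ref{Pro5-3}, Lemma~\ref{le5-5} and Proposition~\ref{Li}. Assume $\mu=0$. By Proposition~\ref{Pro5-2} and Lemma~\ref{le3-1}, every regular superlevel set $E_\Phi(t)$ is a single Jordan domain. Let
\[
T:=\{t\in\mathscr R_\Phi: f(t)>0,\ E_\Phi(t)\text{ is a disk}\}.
\]
By Lemma~\ref{le5-5}, $T$ contains almost every regular value with $f(t)>0$.

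\emph{Symmetry of $\Phi$ on each disk $E_\Phi(t)$, $t\in T$.} Set $u:=\Phi-t\ge0$ on $B:=E_\Phi(t)$, with $u=0$ on $\partial B$. By Proposition~\ref{Pro5-3},
\[
-\Delta u=\rho=f(\Phi)=G(u)\quad\text{a.e.},\qquad G(r):=f(r+t)\mathbf 1_{\{r\le \M-t\}}.
\]
Here $G$ is bounded, Borel and nonnegative. Since $u\in C^2$, $u$ is a weak solution. Proposition~\ref{Li} then shows that $\Phi$ is radial and radially nonincreasing about the center $a_t$ of $B$.

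\emph{The centers coincide.} Take $t<t'$ in $T$. The set $E_\Phi(t')$ is a superlevel set of the function $\Phi|_{E_\Phi(t)}$, which is radial and nonincreasing about $a_t$. Hence $E_\Phi(t')$ is a disk centered at $a_t$, so $a_{t'}=a_t$; note that $E_\Phi(t')$ is nonempty, since $t'<\M$. Therefore all centers equal a single point $a$. Let $t_0:=\inf T$. Then $\Phi$ is radial about $a$ on $U:=\bigcup_{t\in T}E_\Phi(t)=E_\Phi(t_0)$, or on all of $\R^2$ if $t_0=-\infty$. Then $\rho=-\Delta\Phi$ is also radial on $U$.

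\emph{The exterior region $\{\Phi\le t_0\}$ when $t_0>-\infty$.} This is the step I expect to be the main obstacle. I claim $\rho=0$ a.e.\ on $\{\Phi<t_0\}$. Suppose instead that $\rho>0$ on a set of positive measure in $\{\Phi<t_0\}$. Then the set of $t<t_0$ with $f(t)>0$ has positive measure, because $\rho=f(\Phi)$ a.e.\ and $\rho=0$ a.e.\ on preimages of critical values (Lemma~\ref{le4-2}). Almost every such $t$ belongs to $T$, contradicting the definition of $t_0$. The level $t_0$ itself is handled the same way: $\rho=0$ a.e.\ on $\Phi^{-1}(t_0)$ whether $t_0$ is critical or a regular value with measure-zero level set. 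Since $\rho\in C^1$, we get $\rho\equiv0$ on the open set $V:=\R^2\setminus\overline{E_\Phi(t_0)}$, so $\Phi$ is harmonic on $V$.

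\emph{Continuing the symmetry outward.} The set $\overline{E_\Phi(t_0)}$ is a closed disk $\overline{B_R(a)}$, being the closure of an increasing union of concentric disks; if $R=\infty$ we are done. On $\partial B_R(a)$, $\Phi\equiv t_0$. As $t\downarrow t_0$ through $T$, $|\nabla\Phi|$ is constant on $\partial E_\Phi(t)$, so by the $C^1$ regularity of $\Phi$, $|\nabla\Phi|$ equals a constant $\kappa$ on $\partial B_R(a)$. The radial symmetry inside forces $\nabla\Phi=-\kappa\,(x-a)/|x-a|$ on this circle. Now let $w:=\Phi-\Phi_{\mathrm{rad}}$, where
\[
\Phi_{\mathrm{rad}}(x):=t_0-\kappa R\log\frac{|x-a|}{R}.
\]
Then $w$ is harmonic in $V=\{|x-a|>R\}$ and has zero Cauchy data on $|x-a|=R$, in the sense that $w$ and $\nabla w$ vanish there, with $\Phi\in C^2$ across the circle. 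I would reflect $w$ by inversion, or apply Holmgren / unique continuation for harmonic functions, both across the smooth circle; this gives $w\equiv0$ in $V$. Hence $\Phi$ is radial about $a$ everywhere, and $\rho=-\Delta\Phi$ is radial as well ($\rho=0$ outside $B_R(a)$). To justify the unique continuation step cleanly, I would extend $w$ by $0$ into $B_R(a)$: the result is $C^1$ and weakly harmonic on a neighborhood of the circle, hence harmonic. Since it vanishes on an open set, it vanishes identically.
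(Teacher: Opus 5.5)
Your proof is correct. Up to the common center it follows the paper almost verbatim: you apply Proposition~\ref{Li} to $\Phi-t$ on the disks $E_\Phi(t)$, then use the nestedness of the radially nonincreasing superlevel sets to get a single center. The genuine difference is the exterior step.

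The paper notes that $\rho$ vanishes outside $B_R(a)$, so it is compactly supported and radial. Hence $h:=\Phi-\mathcal N*\rho$ is an entire harmonic function that is radial in $B_R(a)$. By the mean value property $h$ is constant there, and by unique continuation it is constant everywhere.

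You instead match Cauchy data on $\partial B_R(a)$. There $\Phi\equiv t_0$ and $\nabla\Phi=-\kappa(x-a)/|x-a|$, so $w=\Phi-\bigl(t_0-\kappa R\log(|x-a|/R)\bigr)$ is harmonic outside the disk with vanishing Cauchy data. Extending $w$ by zero into $B_R(a)$ gives a $C^1$ weakly harmonic, hence harmonic, function on $\R^2$ that vanishes on an open set.

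Both routes rest on unique continuation. The paper's is shorter and needs no boundary information. Yours is local near the circle, avoids the global potential $\mathcal N*\rho$, and gives the exterior logarithmic profile explicitly. The constancy of $|\nabla\Phi|$ on the circle already follows from radial symmetry inside together with $\Phi\in C^1$, so you do not need Lemma~\ref{le5-5} at that point.

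One small omission: you never check that $T\neq\varnothing$. If $T$ were empty, then $t_0=+\infty$ and your exterior step has no disk to start from. The paper closes this with Liouville's theorem. If no regular value has $f>0$, then $\rho=0$ a.e., so $\Phi$ is harmonic and bounded above, hence constant, which contradicts $\Phi\to-\infty$.

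A minor citation point: the fact that $\rho=0$ a.e. on preimages of critical values comes from $\nabla\Phi=0$ a.e. on $\Phi^{-1}(N)$. This is the fact used in the proof of Proposition~\ref{Pro5-3}, not Lemma~\ref{le4-2} itself.
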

	
	\begin{proof}
{\it Step 1.}		Set
		$$
		A:=\{t\in\mathscr R_\Phi:f(t)>0\}.
		$$
		We first note that $A\neq\varnothing$. Indeed, if $A=\varnothing$, then \eqref{5-7} and \eqref{5-8} would imply that $\rho=0$ almost everywhere in $\R^2$. Hence, by \eqref{3-1}, the function $\Phi$ would be harmonic in $\R^2$. Since $\Phi(x)\to-\infty$ as $|x|\to\infty$, it is bounded above, and therefore constant by Liouville's theorem. This leads to a contradiction. We may thus define $t_*:=\inf A$. Since $A$ is open and the conclusion of Lemma \ref{le5-5} holds for almost every $t\in A$, we may choose a sequence $t_k\in A$ with $t_k\searrow t_*$ such that, for each $k$, there exist $R_k>0$ and $a_k\in\R^2$ satisfying
$E_\Phi(t_k)=B_{R_k}(a_k)$.
		On $E_\Phi(t_k)$, set
		$
		u_k:=\Phi-t_k
		$
		and define $G_k\colon[0,\infty)\to[0,\infty)$ by
		$$
		G_k(s):=
		\begin{cases}
			f(t_k+s), & 0\leq s\leq \M-t_k,\\
			0, & s>\M-t_k.
		\end{cases}
		$$
		Since $f$ is locally bounded and Borel measurable, $G_k$ is a bounded
		nonnegative Borel function. For $x\in E_\Phi(t_k)$, one has
		$
		0<u_k(x)\leq \M-t_k.
		$
		Therefore, by Proposition \ref{Pro5-3},
		$$
		-\Delta u_k=f(\Phi)=f(t_k+u_k)=G_k(u_k) \qquad\text{a.e. in }E_\Phi(t_k).
		$$
		Thus $u_k$ is a weak solution of
		$$
		-\Delta u_k=G_k(u_k) \qquad\text{in }E_\Phi(t_k).
		$$
		Moreover, we have
		$
		u_k>0\text{ in }E_\Phi(t_k), \ u_k=0\text{ on }\partial E_\Phi(t_k).
		$
		Proposition \ref{Li} therefore implies that $u_k$, and hence $\Phi$, is radially symmetric with respect to $a_k$ and nonincreasing in the radial variable on $E_\Phi(t_k)$.
		
		{\it Step 2.} We next show that the centers $a_k$ are independent of $k$. If
		$t_{k+1}=t_k$, then
		$E_\Phi(t_{k+1})=E_\Phi(t_k)$,
		and the uniqueness of the center of a disk immediately gives $a_{k+1}=a_k$. Suppose now that $t_{k+1}<t_k$. Then
		$
		\varnothing\neq E_\Phi(t_k)\subsetneq E_\Phi(t_{k+1}).
		$
		Since $\Phi$ is radially symmetric and nonincreasing with respect to $a_{k+1}$ on $E_\Phi(t_{k+1})$, every nonempty proper strict superlevel set of $\Phi$ in $E_\Phi(t_{k+1})$ is a disk centered at $a_{k+1}$. In
		particular,
		$$
		E_\Phi(t_k)
		=
		\bigl\{x\in E_\Phi(t_{k+1}):\Phi(x)>t_k\bigr\}
		$$
		is centered at $a_{k+1}$. On the other hand,
		$E_\Phi(t_k)=B_{R_k}(a_k)$.
		The uniqueness of the center of a disk therefore yields $a_k=a_{k+1}$. Hence all the disks $E_\Phi(t_k)$ have a common center, which we denote by $a$.
		
		Since $t_k\searrow t_*$, we have
		$$
		\{\Phi>t_*\}=\bigcup_{k\geq1}E_\Phi(t_k).
		$$
		The sets on the right-hand side are nested disks with common center $a$. Consequently, $\{\Phi>t_*\}$ is either a disk centered at $a$ or the whole plane. Moreover, $\Phi$ is radially symmetric with respect to $a$ and nonincreasing in the radial variable on this set.
		
{\it Step 3.}		Since $\rho=f(\Phi)$ almost everywhere, it follows that $\rho$ is radially symmetric with respect to $a$ on $\{\Phi>t_*\}$. We also have
		$$
		\rho=0 \qquad\text{a.e. on }\R^2\setminus\{\Phi>t_*\}.
		$$
		Thus $\rho$ is radially symmetric with respect to $a$ in $\R^2$. It remains to prove the same conclusion for $\Phi$. We already know that $\Phi$ is radially symmetric on $\{\Phi>t_*\}$. If this set is the whole plane, there is nothing more to prove. Otherwise, it is a disk $B_R(a)$. Since $\rho$ vanishes almost everywhere outside $B_R(a)$, it is compactly supported. Let
		$
		\mathcal N(x):=\frac{1}{2\pi}\log\frac{1}{|x|}
		$
		be the two-dimensional Newtonian potential, and set
		$
		h:=\Phi-\mathcal N*\rho.
		$
		Then $h$ is harmonic in $\R^2$. Since both $\Phi$ and $\mathcal N*\rho$ are radially symmetric with respect to $a$ in $B_R(a)$, so is $h$. A radially symmetric harmonic function in $\R^2$ must be constant. Hence $h$ is constant in $B_R(a)$ and, by unique continuation for harmonic functions, in all of $\R^2$. Consequently,
		$
		\Phi=\mathcal N*\rho+C
		$
		for some $C\in\R$. It follows that $\Phi$ is radially symmetric with respect to $a$ in $\R^2$. Hence the proof of the proposition is completed.
	\end{proof}

	\section{Proof of Theorem \ref{main} with $\Omega=0$}\label{s6}
	In this section, we prove Theorem \ref{main} in the case $\Omega=0$. The key idea is to apply the rigidity theory developed in the preceding three sections.
	
	By \eqref{threshold}, $\omega_0$ has a fixed sign. Choose
	$\sigma\in\{-1,1\}$ and set
	\begin{equation}\label{7-1}
		\omega:=\sigma\omega_0\geq0,
		\qquad
		v:=K*\omega,
		\qquad
		\Gamma:=\int_{\R^2}\omega\,\dd x.
	\end{equation}
	Then $v\cdot\nabla\omega=0$. If $\Gamma=0$, then $\omega\equiv0$, and the conclusion is immediate. We henceforth assume that $\Gamma>0$.
	
	\subsection{The stationary stream function}
	We first construct a stationary stream function satisfying the asymptotic behavior at infinity in \eqref{3-1}.
	\begin{lemma}\label{le7-1}
		There exists a stream function $u\in C_{\loc}^{2,\beta}(\R^2)$ for every $0<\beta<1$, unique up to an additive constant, such that
		\begin{equation}\label{7-2}
			v=\nabla^\perp u,\qquad-\Delta u=\omega.
		\end{equation}
		Under the normalization $u(0)=0$,
		\begin{equation}\label{7-3}
			u(x)=-\frac{1}{2\pi}\int_{\R^2}\log\frac{|x-y|}{|y|}\,\omega(y)\,\dd y.
		\end{equation}
		Moreover,
		\begin{equation}\label{7-4}
			u(x)\to-\infty\qquad\text{as }|x|\to\infty.
		\end{equation}
		Consequently, every superlevel set of $u$ is bounded, and $u$ attains a finite global maximum.
	\end{lemma}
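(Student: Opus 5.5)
Define $u$ by the renormalized logarithmic potential \eqref{7-3} and verify its properties in turn.

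First, the integral converges for every $x$. Write $\log\frac{|x-y|}{|y|}=\log|x-y|-\log|y|$ and split $\R^2$ into $\{|y|\le 2|x|+1\}$ and its complement. On the bounded region, both logarithms are locally integrable, and $\omega\in L^\infty$ handles the singularities at $y=x$ and $y=0$. On the complement, $\bigl|\log\frac{|x-y|}{|y|}\bigr|\le C|x|/|y|\le C$, and $\omega\in L^1$ gives convergence. The same splitting, differentiated under the integral, gives $\nabla u=-\frac1{2\pi}\int\frac{x-y}{|x-y|^2}\omega(y)\dd y$. Hence $\nabla^\perp u=K*\omega=v$ with the paper's convention $x^\perp=(x_2,-x_1)$; I will check this sign convention carefully. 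Standard Newtonian potential theory then gives $-\Delta u=\omega$ in the distributional sense. Since $\omega\in C^1\subset C^{0,\beta}_{\loc}$, Schauder estimates give $u\in C^{2,\beta}_{\loc}$. Concretely, compare $u$ on a ball $B_R$ with the truncated potential of $\omega\1_{B_{2R}}$; the difference is harmonic in $B_R$.

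Uniqueness: if $u_1,u_2$ both satisfy $\nabla^\perp u_i=v$, then $\nabla(u_1-u_2)=0$, so they differ by a constant. The normalization $u(0)=0$ is immediate from \eqref{7-3}.

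The main step is \eqref{7-4}. Since $\omega\ge0$ and $\Gamma>0$, I would show
\[
u(x)=-\frac{\Gamma}{2\pi}\log|x|+o(\log|x|),
\]
or at least obtain a lower bound $-u(x)\ge c\log|x|-C$. I write
\[
2\pi u(x)=-\int\log|x-y|\,\omega(y)\dd y+\int\log|y|\,\omega(y)\dd y.
\]
The second integral might diverge if $\omega$ lacks a log moment, so I will not separate the terms; I work with the combined kernel instead. Fix $R_0$ with $\int_{B_{R_0}}\omega\ge\Gamma/2$. For $|y|\le R_0$ and $|x|\ge 2R_0$, the kernel satisfies $\log\frac{|x-y|}{|y|}\ge\log\frac{|x|}{2}-\log|y|$. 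After integration against $\omega$ on $B_{R_0}$ (where $\int_{B_{R_0}}\log^-|y|\,\omega<\infty$ because $\omega$ is bounded), this contributes at least $\frac{\Gamma}{2}\log|x|-C$.

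The remaining region $|y|>R_0$ is the obstacle, because $\log\frac{|x-y|}{|y|}$ is negative when $y$ is near $x$. For these $y$, split according to $|x-y|\ge1$ or $|x-y|<1$:
\begin{itemize}
\item If $|x-y|\ge1$: $\log\frac{|x-y|}{|y|}\ge-\log|y|\ge-\log(|x|+|x-y|)$. This needs care. Simpler: when $|x-y|\ge |x|/2$ we have $|y|\le 3|x-y|$, so the kernel is $\ge-\log3$.
\item If $|x-y|<|x|/2$: then $|y|\ge|x|/2$, so the total weight $\int_{|y|\ge|x|/2}\omega=o(1)$. The kernel is bounded below by $\log|x-y|-\log(2|x|)$. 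The term $-\log(2|x|)$ against $o(1)$ mass gives $-o(\log|x|)$. The term $\int\log^-|x-y|\,\omega$ over $|x-y|<1$ is bounded by $\|\omega\|_\infty\int_{B_1}\log^-$.
\end{itemize}
Combining these, $-2\pi u(x)\ge\frac{\Gamma}{2}\log|x|-o(\log|x|)-C$, which tends to $+\infty$ as $|x|\to\infty$.

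Finally, every superlevel set $\{u>t\}$ is then bounded. Since $u$ is continuous, it attains a finite global maximum on the compact set $\{u\ge u(0)\}$.
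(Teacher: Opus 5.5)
Your proposal is correct and follows the paper's proof essentially step for step. Both arguments use the same renormalized potential, the same splitting at $|y|=2|x|+1$ for absolute convergence, and standard potential theory plus Schauder estimates for \eqref{7-2} and the regularity. For \eqref{7-4}, both split $\R^2$ into a large ball carrying a fixed fraction $m$ of the mass, which yields the $-\frac{m}{2\pi}\log|x|$ decay, and its complement.

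The only difference is how the complement is handled. The paper bounds its contribution to $u$ crudely by $\frac{\eta}{2\pi}\log|x|+C$ via $\log^+\frac{|y|}{|x-y|}$, and so needs $m>\eta$, i.e.\ $m>\Gamma/2$ strictly. Your split according to whether $|x-y|<|x|/2$ shows that the complement contributes only $o(\log|x|)$, so any ball of positive mass would suffice.
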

	
	\begin{proof}
		Define $u$ by \eqref{7-3}. The logarithmic singularities in the integrand are locally integrable. Moreover, for each fixed $x\in\R^2$
		and $|y|>2|x|+1$, we have
		$
		\left|\log\frac{|x-y|}{|y|}\right| \leq C\frac{|x|}{|y|}.
		$
		Together with $\omega\in L^1(\R^2)\cap L^\infty(\R^2)$, these observations show that the integrand in \eqref{7-3} belongs to $L^1(\R^2)$ as a function of $y$. Hence $u$ is well defined.
		
		Standard potential theory and local elliptic regularity give \eqref{7-2} and the claimed regularity. The identity $v=\nabla^\perp u$ also yields uniqueness up to an additive constant.
		
		To prove \eqref{7-4}, choose $R>1$ such that
		$$
		m:=\int_{B_R}\omega\,\dd x>\frac{\Gamma}{2},
		\qquad \eta:=\int_{\R^2\setminus B_R}\omega\,\dd x=\Gamma-m<m.
		$$
		For $r:=|x|>2R$, the contribution of $B_R$ to \eqref{7-3} is bounded by
		$$
		-\frac{1}{2\pi}\int_{B}\log\frac{|x-y|}{|y|}\,\omega(y)\,\dd y\leq-\frac{m}{2\pi}\log r+C_R.
		$$
		Using
		$
		-\log\frac{|x-y|}{|y|} \leq \log^+\frac{|y|}{|x-y|}
		$
		and splitting the complement of $B$ into $R<|y|\leq2r$ and $|y|>2r$, yields
		$$
		-\frac{1}{2\pi}\int_{B^c}\log\frac{|x-y|}{|y|}\,\omega(y)\,\dd y\leq\frac{\eta}{2\pi}\log r+C_R.
		$$
		Hence
		$$
		u(x)\leq-\frac{m-\eta}{2\pi}\log r+C_R.
		$$
		Since $m-\eta=2m-\Gamma>0$, \eqref{7-4} follows. This the proof of the lemma.
	\end{proof}
	
	By \eqref{1-6} and \eqref{7-2}, we have
	\begin{equation}\label{7-5}
		\nabla^\perp u\cdot\nabla\omega=v\cdot\nabla\omega=0.
	\end{equation}
	Thus $(\Phi,\rho):=(u,\omega)$ satisfies \eqref{3-1}. In the rest of this section, $S$, $A$, $\mu$, and $\M$ denote the quantities
	associated with this pair and introduced in Sections \ref{s3}--\ref{s5}; in particular,
	$
	\M=\max_{\R^2}u.
	$
	
	\subsection{The normalized Bernoulli function}
	
	Let $H\in C^1(\R^2)$ be the Bernoulli function constructed in Section~\ref{s3}, so that
	\begin{equation}\label{add-26-9-12}
		\nabla H=\omega\nabla u.
	\end{equation}
	By Lemma \ref{le3-0}, we have $|\nabla u|\in L^\infty(\R^2)$, and hence
	$
	\nabla H\in L^1(\R^2)\cap L^\infty(\R^2).
	$
	By Lemma~\ref{lem7-7}, there exists a constant $c\in\R$ such that
	$$
	\|H-c\|_{L^2(\R^2)} \leq C\|\nabla H\|_{L^1(\R^2)}.
	$$
	Subtracting $c$ from $H$, which does not affect \eqref{add-26-9-12}, we may assume that $H\in L^2(\R^2)$. Moreover, since $\nabla H\in L^\infty(\R^2)$, the function $H$ is Lipschitz continuous. It follows that
	\begin{equation}\label{7-6}
		H(x)\to 0  \qquad\text{as }|x|\to\infty.
	\end{equation}

	Recalling Proposition \ref{Pro3-5}, we have a one-dimensional representation of $H$ along each branch. Using \eqref{7-6}, one can now remove the branchwise constants.
	
	For $t<0$, let $D_0(t)$ denote the connected component of $\{u>t\}$ containing the origin.
	\begin{lemma}\label{le7-3}
		For every $x\in\R^2$, we have
		\begin{equation}\label{7-8}
			H(x)=\int_{-\infty}^{u(x)}\mathfrak c(x,t)\,\dd t\geq0,
		\end{equation}
		where $\mathfrak c$ is the branchwise density defined by \eqref{3-8}. Consequently,
		\begin{equation}\label{7-9}
			\int_{\R^2}H(x)\,\dd x=\int_{-\infty}^{\M}A(t)\,\dd t,
		\end{equation}
		with both sides allowed to take the value $+\infty$.
	\end{lemma}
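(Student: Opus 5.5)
Fix $x\in\R^2$. The plan is to apply Proposition~\ref{Pro3-5} on a sequence of regular levels $a_k\to-\infty$ and then use the normalization \eqref{7-6} to remove the boundary constants $h_{D}(a_k)$.

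Choose regular values $a_k<u(x)$ with $a_k\searrow-\infty$; this is possible since critical values have measure zero. Let $D_k:=D_{a_k}(x)\in\cC_u(a_k)$. Proposition~\ref{Pro3-5} gives
\[
H(x)-h_{D_k}(a_k)=\int_{a_k}^{u(x)}\mathfrak c(x,s)\,\dd s .
\]
The integrand is nonnegative and $\mathfrak c(x,\cdot)$ vanishes off $(-\infty,u(x))$. By monotone convergence, the right side therefore increases to $\int_{-\infty}^{u(x)}\mathfrak c(x,t)\,\dd t\in[0,\infty]$. It thus suffices to show $h_{D_k}(a_k)\to0$.

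Here $h_{D_k}(a_k)$ is the value of $H$ at any point of $\partial D_k\subset u^{-1}(a_k)$. By Lemma~\ref{le7-1} the superlevel sets $\{u\ge a_k\}$ are compact. For $a_k$ below any fixed value, the level $u^{-1}(a_k)$ must leave every fixed ball: $u$ is bounded below on $\overline{B_R}$, so once $a_k<\min_{\overline{B_R}}u$ the level set $u^{-1}(a_k)$ lies outside $B_R$. Hence $\operatorname{dist}(0,\partial D_k)\to\infty$. By \eqref{7-6}, $\sup_{u^{-1}(a_k)}|H|\to0$, so $h_{D_k}(a_k)\to0$. This proves \eqref{7-8}; in particular the integral is finite and $H\ge0$.

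For \eqref{7-9}, integrate \eqref{7-8} over $\R^2$ and apply Tonelli to the nonnegative Borel function $(x,t)\mapsto\mathfrak c(x,t)\1_{\{t<u(x)\}}$, which is measurable by Lemma~\ref{le3-4}. This gives
\[
\int_{\R^2}H\,\dd x=\int_{-\infty}^{\M}\int_{\R^2}\mathfrak c(x,t)\,\dd x\,\dd t .
\]
The inner integral equals $A(t)$ by \eqref{4-10}, since $\mathfrak c(\cdot,t)$ is supported in $E_u(t)$ and vanishes at critical $t$. This holds in $[0,\infty]$.

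I do not expect a serious obstacle. The only point needing care is that the boundary constants tend to zero uniformly along the branch. This rests on \eqref{7-6} and on the fact that low level sets escape to infinity, which follows from $u$ being bounded on compact sets.
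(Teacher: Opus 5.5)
Your proof is correct and follows essentially the paper's argument. You apply Proposition~\ref{Pro3-5} at regular levels tending to $-\infty$, remove the boundary constants via \eqref{7-6}, and conclude with Tonelli and \eqref{4-10}; the only cosmetic difference is that you use $D_{a_k}(x)$ and note that the whole level set $u^{-1}(a_k)$ leaves every ball, whereas the paper tracks the component $D_0(t)$ containing the origin and uses a path from $0$ to $x$ to place $x$ in it.
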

	
	\begin{proof}
		For a regular value $t<0$, set
		$
		h_0(t):=H|_{\partial D_0(t)}.
		$
		If $t<\min_{\overline{B_R}}u$, then $B_R\subset D_0(t)$. Hence $\partial D_0(t)$ escapes to infinity as $t\to-\infty$, and
		\eqref{7-6} gives $h_0(t)\to0$ along regular values. Fix $x\in\R^2$, and choose regular values $t_k\to-\infty$ below the minimum of $u$ on a compact path joining $0$ to $x$. Then
		$x\in D_0(t_k)$. By Proposition \ref{Pro3-5}, we have
		$$
		H(x)=h_0(t_k)+\int_{t_k}^{u(x)}\mathfrak c(x,t)\,\dd t.
		$$
		Letting $k\to\infty$ proves \eqref{7-8}. Note that \eqref{7-9} follows from Tonelli's theorem and \eqref{4-10}. The proof is thus complete.
	\end{proof}
	
	\subsection{Vanishing of the defect measure}
	In view of Proposition~\ref{Pro5-6}, to establish radial symmetry of $u$ and $\omega$, it suffices to prove that the defect measure $\mu$ vanishes.
	
	We begin by extracting from the mass-square decomposition an identity relating $\mu$ to the Bernoulli function.
	
	\begin{lemma}\label{lem7-8}
		The function $H$ belongs to $L^1(\R^2)$, and
		\begin{equation}\label{7-10}
			\Gamma^2= 8\pi\int_{\R^2}H\,\dd x +\mu((-\infty,\M)).
		\end{equation}
	\end{lemma}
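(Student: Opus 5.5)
The plan is to let $t\to-\infty$ along regular values in the identity \eqref{4-14} of Proposition~\ref{Pro4-4}, namely $S(t)=8\pi I(t)+\mu((t,\M))$, and then identify the three limits separately.

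\emph{The limit of $S$.} Since $\omega\in L^1$ and $\int_{E_u(t)}\omega\to\Gamma$ as $t\to-\infty$, we have $S(t)\le\Gamma^2$. For a lower bound, fix $R$ and take $t<\min_{\overline{B_R}}u$. Then $B_R$ lies in the single component $D_0(t)$, so
\[
S(t)\ge\Bigl(\int_{D_0(t)}\omega\Bigr)^2\ge\Bigl(\int_{B_R}\omega\Bigr)^2 .
\]
Letting first $t\to-\infty$ and then $R\to\infty$ gives $S(t)\to\Gamma^2$.

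\emph{The limit of $\mu$.} By monotone convergence of measures, $\mu((t,\M))\nearrow\mu((-\infty,\M))$ as $t\to-\infty$.

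\emph{The limit of $I$.} By Corollary~\ref{cor3-6}, $I(t)=\int_t^\M A(s)\,\dd s$, which increases to $\int_{-\infty}^\M A(s)\,\dd s=\int_{\R^2}H\,\dd x$ by \eqref{7-9}. Because $H\ge0$ by Lemma~\ref{le7-3}, the identity gives $8\pi I(t)\le S(t)\le\Gamma^2$. Passing to the limit, $\int_{\R^2}H\,\dd x\le\Gamma^2/(8\pi)<\infty$, so $H\in L^1(\R^2)$. Combining the three limits in \eqref{4-14} then yields \eqref{7-10}.

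\emph{Consistency of $I$ with the component formula.} The only delicate point is that $I(t)$ was defined through the branch constants $h_D(t)$, not through $H$ itself. This causes no trouble here, because we only use the representation $I(t)=\int_t^\M A$, which is already established, together with \eqref{7-9}. No direct comparison of $\sum_D\int_D(H-h_D)$ with $\int H$ is needed.

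The main, and only mild, obstacle is therefore the convergence $S(t)\to\Gamma^2$. It requires that low superlevel sets eventually contain any fixed ball within one component, which follows from $u\to-\infty$ at infinity together with the continuity of $u$ on $\overline{B_R}$.
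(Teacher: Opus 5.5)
Your proof is correct and follows the paper's argument. The paper also obtains $S(t)\to\Gamma^2$ from $B_R\subset D_0(t)$ for very negative $t$, and then passes to the limit in the integrated decomposition $-\dd S=8\pi A\,\dd t+\mu$ using \eqref{7-9}, which is exactly your limit in \eqref{4-14}. One small attribution slip: the bound $8\pi I(t)\le S(t)$ comes from $\mu\ge0$, whereas $H\ge0$ is what turns $\int_{\R^2}H\,\dd x<\infty$ into $H\in L^1(\R^2)$.
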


	\begin{proof}
		First, we claim that
		\begin{equation}\label{7-11}
			\lim_{t\to-\infty}S(t)=\Gamma^2,\qquad \lim_{t\nearrow\M}S(t)=0.
		\end{equation}
		The second limit follows from Lemma~\ref{le4-2}. To prove the first,
		set
		$
		m_0(t):=\int_{D_0(t)}\omega\,\dd x.
		$
		For every fixed $R>0$, the inclusion $B_R\subset D_0(t)$ holds for all sufficiently negative $t$. Hence
		$
		\liminf_{t\to-\infty}m_0(t)  \geq \int_{B_R}\omega\,\dd x.
		$
		Letting $R\to\infty$ and using $\omega\in L^1(\R^2)$, we obtain
		$
		\liminf_{t\to-\infty}m_0(t)\geq\Gamma.
		$
		Since $m_0(t)\leq\Gamma$, it follows that $m_0(t)\to\Gamma$ as $t\to-\infty$. Finally, we have
		$$
		[m_0(t)]^2 \leq S(t) \leq\left(\int_{\{u>t\}}\omega\,\dd x\right)^2 \leq\Gamma^2,
		$$
		and the first limit in \eqref{7-11} follows. By \eqref{4-13}, we have the following identity of measures on $(-\infty,\M)$:
		$$
		-\dd S=8\pi A(t)\,\dd t+\mu.
		$$
		Integrating between regular endpoints, letting them tend respectively to $-\infty$ and $\M$, and using \eqref{7-9} and \eqref{7-11}, we obtain
		\begin{equation*}
			\Gamma^2=8\pi\int_{\R^2}H\,\dd x+\mu((-\infty,\M)).
		\end{equation*}
		Since $H\geq0$ and $\mu$ is a nonnegative measure, both terms on the right-hand side are finite. The proof of the lemma is thus completed.
	\end{proof}

In the following, we complete the proof of Theorem \ref{main} with $\Omega=0$.		By Lemma~\ref{lem7-8}, it remains to prove that
		$$
		\int_{\R^2}H\,\dd x=\frac{\Gamma^2}{8\pi}.
		$$
		We obtain this identity from a global Pohozaev argument. Set
		$$
		w(x):=u(x)+\frac{\Gamma}{2\pi}\log|x|.
		$$
		By \eqref{7-3}, we have
		$$
		\nabla w(x)
		=
		\int_{\R^2}
		\bigl[\kappa(x-y)-\kappa(x)\bigr]\omega(y)\,\dd y,\ \ \ \kappa(x):=-\frac{1}{2\pi}\frac{x}{|x|^2}.
		$$
		Lemma \ref{Lem2-3} therefore provides a sequence $R_k\to\infty$ such that
		\begin{equation}\label{7-13}
			\int_{\left\{R_k<|x|<2R_k\right\}}|\nabla w|^2\,\dd x\to 0.
		\end{equation}
		Since $H\geq0$ and $H\in L^1(\R^2)$, we also have
		$$
		\int_{\left\{R_k<|x|<2R_k\right\}}H\,\dd x\to0.
		$$
		It follows from intermediate value theorem that there exists
		$r_k\in(R_k,2R_k)$ such that
		\begin{equation}\label{7-14}
			r_k\int_{\partial B_{r_k}}
			\bigl(|\nabla w|^2+H\bigr)\,\dd\cH^1
			\to0.
		\end{equation}
		As in Subsection \ref{s3-3}, we introduce the following stress tensor
		$$
		\tilde{T}:= \nabla u\otimes\nabla u -\frac12|\nabla u|^2I +HI.
		$$
		Then one has
		$$
		\Div  \tilde{T}=0, \quad  \operatorname{tr} \tilde{T}=2H, \text{ and }\Div( \tilde{T}x)=2H.
		$$
		Integrating this identity over $B_r$ and applying the divergence
		theorem gives
		\begin{equation}\label{7-15}
			2\int_{B_r}H\,\dd x=\frac r2\int_{\partial B_r} \bigl(u_r^2-u_\tau^2+2H\bigr)\,\dd\cH^1.
		\end{equation}
		Note that, on $\partial B_r$, one has
		\begin{equation}\label{7-15-1}
			w_r=u_r+\frac{\Gamma}{2\pi r}, \qquad  u_\tau=w_\tau.
		\end{equation}
		It follows from the divergence theorem that
		\begin{equation}\label{7-16}
			\int_{\partial B_r}w_r\,\dd\cH^1= \int_{\partial B_r}u_r\,\dd\cH^1+\Gamma= \Gamma-\int_{B_r}\omega\,\dd x \to 0 \qquad\text{as }r\to\infty.
		\end{equation}
		Combining \eqref{7-15}, \eqref{7-15-1} and \eqref{7-16}, yields
		\begin{equation}\label{7-17}
			2\int_{B_r}H\,\dd x=\frac{\Gamma^2}{4\pi} +\frac r2\int_{\partial B_r}\bigl(w_r^2-w_\tau^2+2H\bigr)\,\dd\cH^1-\frac{\Gamma}{2\pi}\int_{\partial B_r}w_r\,\dd\cH^1
		\end{equation}
		Taking $r=r_k$ and letting $k\to\infty$, we infer from \eqref{7-14} and \eqref{7-17} that
		$$
		\int_{\R^2}H\,\dd x=\frac{\Gamma^2}{8\pi}.
		$$
		The proof is thus complete.
	\qed
	
	We have therefore completed the proof of Theorem \ref{main} in the case $\Omega=0$.
	\begin{remark}
	    From the proof of the steady state case above, it can be seen that our method also provides an alternative proof of the classical result in \cite{ChenLi1991} that does not employ the method of moving planes. In fact, it follows from the Lemma~1.2 in \cite{ChenLi1991} that, in this case, $u$ is bounded above and satisfies the general elliptic problem considered in Section~\ref{s3}. The corresponding Bernoulli function $H$ is then simply $e^u$.
	\end{remark}
	\section{Proof of Theorem \ref{main} with $\Omega\not=0$}\label{s7}
	In this section, we prove Theorem \ref{main} in the case $\Omega\neq0$. The main point is to introduce a normalized stream function so that the rigidity theory of Sections \ref{s3}--\ref{s5} applies globally.

	\subsection{A normalized stream function}
	We first introduce a normalized stream function. Recall that, in Theorem \ref{main}, $\omega_0$ is assumed only to belong to $C^1(\mathbb{R}^2)\cap L^1(\R^2)\cap L^\infty(\R^2)$. Under this assumption, the convolution $\mathcal{N}*\omega_0$ may not be well defined; see, for instance,
	\cite{Lieb2001}. To normalize the behavior at infinity, define
	\begin{equation}\label{2-6}
		\psi(x)=\frac1{2\pi}\int_{\R^2} \log\frac{\langle y\rangle}{|x-y|}\,\omega_0(y)\,\dd y,
	\end{equation}
    where $\langle y\rangle := (1+|y|^2)^{1/2}$.
	The integral in \eqref{2-6} is absolutely convergent for every $x\in\R^2$. Moreover, the straightforward computations show that
	\begin{equation}\label{2-6-1}
		\psi(x)=o(|x|),\ \ \  \nabla\psi(x)=-\frac{1}{2\pi}\int_{\R^2} \frac{x-y}{|x-y|^2}\,\omega_0(y)\,\dd y=o(1) \qquad\text{as } |x|\to\infty.
	\end{equation}
	We also have
	\[
	-\Delta\psi=\omega_0\qquad\text{in }\R^2.
	\]
	Since $\omega_0\in C^1(\R^2)$, standard interior elliptic regularity yields $\psi\in C_{\loc}^{2,\alpha}(\R^2)$ for every $0<\alpha<1$.
	
	Set
	\begin{equation*}
		\Psi(x)=\psi(x)+\frac\Omega2|x|^2.
	\end{equation*}
	Then we have $ \nabla^\perp\Psi\cdot\nabla\rho=0$ by \eqref{1-6}. We now recast the problem in the form of \eqref{3-1}.
	
	\begin{lemma}\label{le6-1}
		Assume, in addition to the hypotheses of Theorem \ref{main}, that
		$\Omega\neq0$. Set
		$$
		\sigma:=-\operatorname{sgn}\Omega, \quad b:=2|\Omega|, \quad g:=\sigma\omega_0, \text{ and }\ \varphi:=\sigma\psi.
		$$
		Then $b>0$, $\sigma\in\{-1,1\}$, and
		\begin{equation}\label{6-2}
			\Phi:=\sigma\Psi =\varphi-\frac{b}{4}|x|^2, \qquad \rho:=-\Delta\Phi=b+g\geq0.
		\end{equation}
		$g\in C^1(\R^2)\cap L^1(\R^2)\cap L^\infty(\R^2)$. Moreover, the following conditions hold.
		\begin{equation}\label{6-3}
			\begin{gathered}
				\nabla\varphi(x)\to0,
				\ \ \
				\varphi(x)=o(|x|),\ \ \  \Phi(x)\to -\infty \quad\text{as }|x|\to\infty,\\
				\nabla^\perp\Phi\cdot\nabla\rho=0 \quad\text{in }\R^2.
			\end{gathered}
		\end{equation}
	\end{lemma}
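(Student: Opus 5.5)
The argument is mostly bookkeeping plus two asymptotic facts. First, $b=2|\Omega|>0$ and $\sigma\in\{-1,1\}$ hold trivially. Since $\sigma=-\operatorname{sgn}\Omega$, we have $\sigma\Omega=-|\Omega|=-b/2$. Hence
\[
\Phi=\sigma\psi+\tfrac{\sigma\Omega}{2}|x|^2=\varphi-\tfrac b4|x|^2 .
\]
From $-\Delta\psi=\omega_0$ we get
\[
-\Delta\Phi=\sigma\omega_0+\tfrac b4\cdot 4=b+g .
\]
The class of $g$ is that of $\omega_0$, since $\sigma=\pm1$.

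For the sign $\rho\ge0$ we split according to \eqref{threshold}. If $\Omega>0$, then $\sigma=-1$ and $\rho=2\Omega-\omega_0$. Because $\omega_0\in L^1$ is continuous, $\inf\omega_0\le0\le\sup\omega_0$. So the alternative $\Omega\le\frac12\inf\omega_0$ would force $\Omega\le0$, which is impossible. Therefore $\Omega\ge\frac12\sup\omega_0$, which gives $\rho\ge0$. The case $\Omega<0$ is symmetric: $\rho=\omega_0-2\Omega=\omega_0+b\ge0$ follows from $\Omega\le\frac12\inf\omega_0$.

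Next come the decay statements. The properties $\nabla\varphi\to0$ and $\varphi=o(|x|)$ are exactly \eqref{2-6-1} multiplied by $\sigma$. If one wants to verify \eqref{2-6-1} rather than assume it, split $\omega_0$ into a part in $B_R$ and a tail of small $L^1$ norm. The kernel $|x-y|^{-1}$ is bounded by $C/|x|$ on $B_R$ for large $|x|$. On the tail, use the bound $\int|x-y|^{-1}|f|\le C\|f\|_{L^1}^{1/2}\|f\|_{L^\infty}^{1/2}$. Then $\varphi=o(|x|)$ follows by integrating the gradient along rays. Finally, $\Phi(x)\le o(|x|)-\frac b4|x|^2\to-\infty$.

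The transport identity is the only point needing care. First, $\nabla^\perp\Psi=\nabla^\perp\psi+\Omega\nabla^\perp(|x|^2/2)$. Differentiating \eqref{2-6} gives $\nabla^\perp\psi=K*\omega_0$, with the convention $\nabla^\perp=(\partial_2,-\partial_1)$ matching $x^\perp=(x_2,-x_1)$. Also $\nabla^\perp(|x|^2/2)=x^\perp$. So $\nabla^\perp\Psi=K*\omega_0+\Omega x^\perp$, and \eqref{1-6} reads $\nabla^\perp\Psi\cdot\nabla\omega_0=0$. Since $\nabla\rho=\sigma\nabla\omega_0$ and $\nabla^\perp\Phi=\sigma\nabla^\perp\Psi$, we obtain $\nabla^\perp\Phi\cdot\nabla\rho=\sigma^2\nabla^\perp\Psi\cdot\nabla\omega_0=0$.

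The main obstacle is merely sign and convention consistency in the Biot--Savart identification. I would check it against $K(x)=\frac1{2\pi}(-x_2,x_1)/|x|^2$ by computing $\nabla^\perp$ of $-\frac1{2\pi}\log|x|$ directly.
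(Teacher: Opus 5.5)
Your proposal is correct and follows essentially the same route as the paper: $\inf\omega_0\le0\le\sup\omega_0$ (from $\omega_0\in L^1$) selects the active branch of \eqref{threshold}; the identity $\sigma\Omega=-b/2$ gives $\Phi=\varphi-\frac b4|x|^2$ and $\rho=b+g\ge0$; the asymptotics are inherited from \eqref{2-6-1}; and $\nabla^\perp\Phi\cdot\nabla\rho=\sigma^2\nabla^\perp\Psi\cdot\nabla\omega_0=0$. Your extra checks of the Biot--Savart sign convention and of \eqref{2-6-1}, via the same near/tail splitting as in Lemma~\ref{le3-0}, are correct and fill in what the paper calls a straightforward computation.
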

	
	\begin{proof}
		Since $\omega_0\in L^\infty(\R^2)\cap L^1(\R^2)\cap C^1(\R^2)$, it follows that $\operatorname*{inf}_{\R^2}\omega_0 \leq0 \leq \operatorname*{sup}_{\R^2}\omega_0$. It follows from
		\eqref{threshold} and $\Omega\neq0$ that either
		$$
		\Omega<0
		\quad\text{and}\quad
		\omega_0-2\Omega\geq0,
		$$
		or
		$$
		\Omega>0 \quad\text{and}\quad 2\Omega-\omega_0\geq0.
		$$
		With the above choice of $\sigma$ and $b$, we have $b=-2\sigma\Omega>0$. Consequently,
		$$
		\Phi=\sigma\Psi =\sigma\psi+\frac{\sigma\Omega}{2}|x|^2 =\varphi-\frac{b}{4}|x|^2,
		$$
		and
		$$
		\rho=-\Delta\Phi =\sigma\omega_0-2\sigma\Omega=g+b\geq0.
		$$
		The asymptotic properties in \eqref{6-3} follow from \eqref{2-6-1}. Finally, since
		$\Phi=\sigma\Psi$ and $\rho=b+\sigma\omega_0$, it follows that
		$$
		\nabla^\perp\Phi\cdot\nabla\rho=\nabla^\perp(\sigma\Psi)\cdot\nabla(b+\sigma\omega_0)=\nabla^\perp\Psi\cdot\nabla\omega_0=0.
		$$
		This completes the proof.
	\end{proof}
	
	In view of Proposition \ref{Pro5-6}, it remains only to show that the defect measure $\mu$ introduced in Proposition \ref{Pro4-4} vanishes. Indeed, once $\mu=0$ is established, Proposition \ref{Pro5-6} implies that $\Phi$ and $\rho$ are radially symmetric with respect to some point $a\in\R^2$. The representation \eqref{6-2}, together with the asymptotic properties in \eqref{6-3}, then forces $a=0$.

	Our remaining task is therefore to prove that $\mu=0$. To this end, we first establish several preliminary lemmas.
	
	Since the quadratic term dominates $\Phi$ at infinity, its level sets are asymptotically circular. More precisely, we have the following lemma.
	
	\begin{lemma}\label{le6-2}
		There exists $R_0>0$ such that
		\begin{equation}\label{6-10}
			\partial_r\Phi(re^{i\theta}) \leq-\frac{b}{4}r \qquad \text{for all }r\geq R_0\text{ and }\theta\in[0,2\pi].
		\end{equation}
		Consequently, there exists a $t_0\in\R$ such that, for every $t<t_0$, the
		set
		$
		E_\Phi(t):=\{x\in\R^2:\Phi(x)>t\}
		$
		is a bounded domain, star-shaped with respect to the origin, and its
		boundary is a $C^2$ radial graph of the form
		\begin{equation}\label{6-11}
			\partial E_\Phi(t)=\bigl\{r_t(\theta)e^{i\theta}:\theta\in\T\bigr\},
		\end{equation}
		where $r_t\in C^2(\T)$ and $r_t>R_0$. In particular, every $t<t_0$ is a regular value of $\Phi$.
	\end{lemma}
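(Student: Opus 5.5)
Since $\Phi=\varphi-\frac b4|x|^2$, in polar coordinates $\partial_r\Phi(re^{i\theta})=\partial_r\varphi(re^{i\theta})-\frac b2 r$. By \eqref{6-3}, $\nabla\varphi(x)\to0$ as $|x|\to\infty$, so there is $R_0\ge 1$ with $|\nabla\varphi(x)|\le \frac b4 R_0\le\frac b4|x|$ for $|x|\ge R_0$. This gives $\partial_r\Phi\le -\frac b2 r+\frac b4 r=-\frac b4 r$ for $r\ge R_0$, which is \eqref{6-10}. Note that this also yields $\nabla\Phi\neq0$ on $\{|x|\ge R_0\}$.

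Next I choose $t_0:=\min_{\overline{B_{R_0}}}\Phi$, which is finite by continuity. For $t<t_0$ we then have $\overline{B_{R_0}}\subset E_\Phi(t)$. For each $\theta\in\T$, the function $r\mapsto\Phi(re^{i\theta})$ on $[R_0,\infty)$ starts at a value $\ge t_0>t$, is strictly decreasing by \eqref{6-10}, and tends to $-\infty$ (either from \eqref{6-3} or by integrating \eqref{6-10}). Hence there is a unique $r_t(\theta)>R_0$ with $\Phi(r_t(\theta)e^{i\theta})=t$. Moreover $\Phi(re^{i\theta})>t$ exactly when $r<r_t(\theta)$, for $r<R_0$ because $B_{R_0}\subset E_\Phi(t)$, and for $R_0\le r$ by monotonicity. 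Therefore $E_\Phi(t)=\{re^{i\theta}:0\le r<r_t(\theta)\}$. This set is bounded, open, star-shaped with respect to the origin (every ray segment from $0$ stays inside), and hence connected, i.e.\ a domain.

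For regularity, the map $G(r,\theta):=\Phi(re^{i\theta})-t$ is $C^2$ on $(R_0/2,\infty)\times\T$, since $\Phi\in C^2$ and $r>0$ there. At $(r_t(\theta),\theta)$ we have $\partial_rG\le -\frac b4 r_t(\theta)<0$. The implicit function theorem, combined with uniqueness of the root, gives $r_t\in C^2(\T)$. The boundary is then $\partial E_\Phi(t)=\Phi^{-1}(t)=\{r_t(\theta)e^{i\theta}\}$, which is \eqref{6-11}. Every point of $\Phi^{-1}(t)$ lies in $\{|x|>R_0\}$, where $\nabla\Phi\ne0$, so $t$ is a regular value.

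The only mildly delicate point is the first step: the decay $\nabla\varphi=o(1)$ must be uniform in $\theta$. This uniformity is exactly what \eqref{6-3} provides, since the limit is taken as $|x|\to\infty$. Everything after that is elementary.
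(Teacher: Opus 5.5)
Your proof is correct and follows the paper's argument essentially step for step. Both use $\nabla\varphi\to0$ to choose $R_0$ and take $t_0$ at or below $\min_{\overline{B_{R_0}}}\Phi$; both then use strict monotonicity along rays for the star-shaped radial-graph description, and the implicit function theorem for $r_t\in C^2(\T)$ and for the regularity of every $t<t_0$.
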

	
	\begin{proof}
		Since $\nabla\varphi(x)\to0$ as $|x|\to\infty$, after enlarging $R_0$
		if necessary, we may assume that
		$$
		|\nabla\varphi(x)| \leq\frac{b}{4}|x| \qquad \text{for }|x|\geq R_0.
		$$
		Thus, for $r\geq R_0$,
		$$
		\partial_r\Phi(re^{i\theta})=\partial_r\varphi(re^{i\theta})-\frac{b}{2}r
		\leq |\nabla\varphi(re^{i\theta})|-\frac{b}{2}r\leq-\frac{b}{4}r.
		$$
		This proves \eqref{6-10}. Integrating along each ray, we obtain
		$$
		\Phi(re^{i\theta})\leq\max_{|x|=R_0}\Phi(x)-\frac{b}{8}\bigl(r^2-R_0^2\bigr),\qquad r\geq R_0.
		$$
		Hence $\Phi(re^{i\theta})\to-\infty$ uniformly in $\theta$.
		
		Choose
		$
		t_0<\min_{\overline{B_{R_0}}}\Phi.
		$
		Fix $t<t_0$ and $\theta\in\T$. The function $r\mapsto\Phi(re^{i\theta})$ is strictly decreasing on $[R_0,\infty)$, takes a value larger than
		$t$ at $r=R_0$, and tends to $-\infty$ as $r\to\infty$. Hence there is
		 a unique $r_t(\theta)>R_0$ such that
		$
		\Phi\bigl(r_t(\theta)e^{i\theta}\bigr)=t.
		$
		Since $B_{R_0}\subset E_\Phi(t)$, it follows that
		$$
		E_\Phi(t)=\bigl\{re^{i\theta}:\theta\in\T,\ 0\leq r<r_t(\theta)\bigr\}.
		$$
		Thus $E_\Phi(t)$ is bounded and star-shaped with respect to the origin. Moreover, it follows from \eqref{6-10} that the level crossing is transverse. The implicit function theorem therefore gives $r_t\in C^2(\T)$.
		Since $\partial_r\Phi\neq0$ on $\partial E_\Phi(t)$, the value $t$ is regular. The proof is thus complete.
	\end{proof}

	We next quantify the variation of $\Phi$ across a large annulus. For
	$R\geq R_0$, define
	\begin{equation}\label{6-12}
		t_R^-:=\max_{\theta\in\T}\Phi(2Re^{i\theta}),
		\qquad
		t_R^+:=\min_{\theta\in\T}\Phi(Re^{i\theta}),
		\qquad
		J_R:=(t_R^-,t_R^+).
	\end{equation}
	The next lemma shows that the length of $J_R$ is bounded below by a constant multiple of $R^2$ and that every level curve corresponding to $t\in J_R$ is confined to the annulus $R<|x|<2R$.
	
	\begin{lemma}\label{le6-3}
		For all sufficiently large $R$, the interval $J_R$ is nonempty, and satisfies
		\begin{equation}\label{6-13}
			|J_R|\geq c_bR^2,
		\end{equation}
		where $c_b>0$ depends only on $b$, and is contained in $(-\infty,t_0)$. Moreover, for every $t\in J_R$,
		\begin{equation}\label{6-14}
			R<r_t(\theta)<2R \qquad \text{for all }\theta\in\T.
		\end{equation}
	\end{lemma}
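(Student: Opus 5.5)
The plan is to compare $\Phi$ directly with the quadratic $-\frac b4|x|^2$, using the sublinear growth $\varphi(x)=o(|x|)$ from Lemma \ref{le6-1}. Fix $\varepsilon>0$, to be chosen depending only on $b$. Choose $R_1\ge R_0$ such that $|\varphi(x)|\le \varepsilon|x|^2$ for $|x|\ge R_1$; this is possible because $\varphi=o(|x|)$. For $R\ge R_1$ this gives
\[
t_R^-=\max_{\theta}\Phi(2Re^{i\theta})\le 4\varepsilon R^2-bR^2,
\qquad
t_R^+=\min_\theta\Phi(Re^{i\theta})\ge -\varepsilon R^2-\frac b4R^2 .
\]
Taking $\varepsilon=b/10$, we get
\[
t_R^+-t_R^-\ge \Bigl(\frac{3b}{4}-5\varepsilon\Bigr)R^2=\frac b4R^2 .
\]
Hence $J_R\neq\varnothing$ and \eqref{6-13} holds with $c_b=b/4$. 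If one prefers to avoid the quadratic bound, the same conclusion follows by integrating \eqref{6-10} along rays from $R$ to $2R$: for each $\theta$,
\[
\Phi(2Re^{i\theta})\le\Phi(Re^{i\theta})-\frac{3b}{8}R^2 .
\]
This controls $t_R^-$ by $\max_\theta\Phi(Re^{i\theta})-\frac{3b}{8}R^2$. One then also needs the oscillation of $\Phi$ on $\partial B_R$ to be $o(R^2)$. That oscillation equals the oscillation of $\varphi$ on $\partial B_R$, which is at most $2\pi R\sup_{|x|=R}|\nabla\varphi|=o(R)$ since $\nabla\varphi\to0$. Either route works; I would use the second one, since it relies only on $\nabla\varphi\to0$.

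For the inclusion $J_R\subset(-\infty,t_0)$, recall that $t_0<\min_{\overline{B_{R_0}}}\Phi$ was fixed in Lemma \ref{le6-2}. It suffices that $t_R^+<t_0$ for large $R$. This holds because $\Phi\to-\infty$ uniformly as $|x|\to\infty$ (shown in the proof of Lemma \ref{le6-2}), so $\max_{|x|=R}\Phi<t_0$ for $R$ large, and therefore $t_R^+\le\max_{|x|=R}\Phi<t_0$.

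For \eqref{6-14}, fix $t\in J_R$ and $\theta\in\T$. By Lemma \ref{le6-2}, $r\mapsto\Phi(re^{i\theta})$ is strictly decreasing on $[R_0,\infty)$, and $r_t(\theta)$ is the unique crossing of the value $t$. We have $\Phi(Re^{i\theta})\ge t_R^+>t$ and $\Phi(2Re^{i\theta})\le t_R^-<t$, so strict monotonicity forces $R<r_t(\theta)<2R$. The only step needing any care is the oscillation estimate on $\partial B_R$ used for the lower bound on $|J_R|$; it is routine given $\nabla\varphi(x)\to0$.
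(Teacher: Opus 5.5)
Your proof is correct, and your first route is exactly the paper's argument: the paper bounds $t_R^+\ge -\frac b4R^2-\sup_{|x|=R}|\varphi|$ and $t_R^-\le -bR^2+\sup_{|x|=2R}|\varphi|$ using $\varphi=o(|x|)$ to get $|J_R|\ge\frac b2R^2$, and it proves the inclusion in $(-\infty,t_0)$ and \eqref{6-14} just as you do, via uniform decay of $\Phi$ and the strict radial monotonicity from \eqref{6-10}. Your preferred second route, which integrates \eqref{6-10} from $R$ to $2R$ and bounds the oscillation of $\varphi$ on $\partial B_R$ by $\pi R\sup_{|x|=R}|\nabla\varphi|=o(R)$, is an equally valid minor variant that uses only $\nabla\varphi\to0$; it gives the lower bound $\frac{3b}{8}R^2-o(R)$, which you then round down to $\frac b4R^2$.
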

	
	\begin{proof}
		Since $\varphi(x)=o(|x|)$ as $|x|\to\infty$, one has
		\begin{align*}
			t_R^+-t_R^-
			&\geq
			\left(-\frac{b}{4}R^2-\sup_{|x|=R}|\varphi(x)|\right)-\left(-bR^2+\sup_{|x|=2R}|\varphi(x)|\right)
			=\frac{3b}{4}R^2-o(R).
		\end{align*}
		Consequently, for all sufficiently large $R$, we have
		$$
		|J_R|=t_R^+-t_R^-\geq\frac{b}{2}R^2.
		$$
		Thus \eqref{6-13} holds with $c_b=b/2$, and $J_R$ is nonempty. Furthermore, the uniform decay of $\Phi$ gives $t_R^+\to-\infty$ as $R\to\infty$, so $J_R\subset(-\infty,t_0)$ for all sufficiently large $R$.
		
		Finally, if $t\in J_R$, then, for every $\theta\in\T$, we have 
		$$
		\Phi(Re^{i\theta})\geq t_R^+>t,\qquad \Phi(2Re^{i\theta})\leq t_R^-<t.
		$$
		By \eqref{6-10}, the unique solution of $\Phi\bigl(r_t(\theta)e^{i\theta}\bigr)=t$ must satisfy
		$
		R<r_t(\theta)<2R.
		$
		This proves \eqref{6-14}. The proof of the lemma is completed.
	\end{proof}

	\subsection{Vanishing of the defect measure}
	
	For $t<t_0$, Lemma \ref{le6-2} shows that $E_\Phi(t)$ is a bounded star-shaped
	domain with boundary
	$$
	\Gamma_t:=\partial E_\Phi(t) =\bigl\{r_t(\theta)e^{i\theta}:\theta\in\T\bigr\}.
	$$
	Set
	\begin{equation}\label{6-15}
		L_t(\theta):=\bigl(r_t(\theta)^2+r_t'(\theta)^2\bigr)^{1/2}\text{ and } G_t(\theta) :=\left|\nabla\Phi\bigl(r_t(\theta)e^{i\theta}\bigr)\right|.
	\end{equation}
	The outward unit normal and the arclength element satisfy
	\begin{equation}\label{6-16}
		n_t=\frac{r_te_r-r_t'e_\theta}{L_t},\qquad\dd\cH^1=L_t\dd\theta,
		\qquad (x\cdot n_t)\dd\cH^1=r_t^2\dd\theta.
	\end{equation}
	
	To relate the defect $\delta(t)$ to the asymptotic circularity of the low-level sets, we estimate it in terms of the angular variation of $\varphi$ along $\partial E_\Phi(t)$. The following lemma provides the key quantitative bound.
	\begin{lemma}\label{le6-4}
		There exists $t_1<t_0$ such that, for every $t<t_1$, we have
		\begin{equation}\label{6-17}
			0\leq\delta(t)\leq4\pi\int_0^{2\pi}\left|\varphi_\theta\bigl(r_t(\theta),\theta\bigr)\right|^2\dd\theta.
		\end{equation}
	\end{lemma}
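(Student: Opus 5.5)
The plan is to compute $\delta(t)=S(t)-8\pi I(t)$ explicitly in polar coordinates for low levels and then compare the two terms with Cauchy--Schwarz. Nonnegativity is already known from Proposition~\ref{Pro4-4}, since every $t<t_0$ is regular by Lemma~\ref{le6-2}; only the upper bound needs proof.

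\emph{Step 1: reduce $S$ and $I$ to boundary integrals.} For $t<t_0$, Lemma~\ref{le6-2} shows that $E_\Phi(t)$ is a single star-shaped component. Hence $S(t)=m(t)^2$ with $m(t)=\int_{\Gamma_t}|\nabla\Phi|\,\dd\cH^1$ by \eqref{4-1}. Proposition~\ref{Pro4-5} together with \eqref{6-16} gives
\[
I(t)=\tfrac14\int_0^{2\pi}r_t^2G_t^2\,\dd\theta .
\]

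\emph{Step 2: parametrize along the curve.} Write $\Phi_r,\Phi_\theta$ for polar derivatives evaluated at $r_t(\theta)e^{i\theta}$. Since $\Phi_\theta=\varphi_\theta$, differentiating $\Phi(r_t(\theta)e^{i\theta})=t$ in $\theta$ gives $r_t'=-\varphi_\theta/\Phi_r$. Set $P:=-r_t\Phi_r>0$, which is positive by \eqref{6-10}, and $q:=\varphi_\theta^2$. Using $|\nabla\Phi|=-\nabla\Phi\cdot n_t$ and \eqref{6-16}, I expect
\[
m(t)=\int_0^{2\pi}\Bigl(P+\frac{q}{P}\Bigr)\dd\theta,\qquad r_t^2G_t^2=P^2+q .
\]
Therefore
\[
\delta(t)=\Bigl(\int P+\int\frac qP\Bigr)^2-2\pi\int(P^2+q).
\]

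\emph{Step 3: estimate.} By Cauchy--Schwarz, $(\int P)^2\le2\pi\int P^2$. This leaves
\[
\delta(t)\le 2\Bigl(\int P\Bigr)\Bigl(\int \frac qP\Bigr)+\Bigl(\int\frac qP\Bigr)^2-2\pi\int q .
\]
The main point is to show that $P$ is nearly constant along $\Gamma_t$ and large. We have $P=\frac b2 r_t^2-r_t\varphi_r=\frac b2r_t^2(1+o(1))$, because $\nabla\varphi\to0$. Moreover, from $\Phi=t$ and $\varphi=o(r)$, we get $r_t^2=\frac4b(\varphi-t)$, so $\max r_t/\min r_t\to1$ as $t\to-\infty$. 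Consequently $\max P/\min P\le1+\varepsilon$ for $t<t_1$, which gives
\[
2\Bigl(\int P\Bigr)\Bigl(\int\frac qP\Bigr)\le 4\pi(1+\varepsilon)\int q .
\]
For the quadratic term, $q\le r_t^2|\nabla\varphi|^2=o(r_t^2)=o(P)$, so $\int q/P=o(1)$ and hence $(\int q/P)^2\le\varepsilon\int q/P\le\varepsilon'\int q$, using $P\gtrsim r_t^2\to\infty$. Combining these bounds yields
\[
\delta(t)\le(4\pi+4\pi\varepsilon+\varepsilon'-2\pi)\int q\le4\pi\int q
\]
once $\varepsilon,\varepsilon'$ are small enough, which fixes $t_1$.

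\emph{Main obstacle.} I expect the hardest part to be the bookkeeping in Step 2, namely getting the exact identity for $m(t)$ with the correct signs of the normal and of $r_t'$. The Step 3 uniformity statements, $\max P/\min P\to1$ and $\sup q/P\to0$, should follow routinely from \eqref{6-3} and Lemma~\ref{le6-2}.
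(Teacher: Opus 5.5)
Your proof is correct. Steps 1--2 coincide with the paper's computation: your identities $G_tL_t=P+q/P$ and $r_t^2G_t^2=P^2+q$ are the paper's relation $G_t^2=\Phi_r^2L_t^2/r_t^2$ rewritten in the variables $P=-r_t\Phi_r$, $q=\varphi_\theta^2$. The two routes differ in Step 3.

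The paper applies Cauchy--Schwarz to the whole integrand, $\bigl(\int_0^{2\pi}G_tL_t\,\dd\theta\bigr)^2\le2\pi\int_0^{2\pi}G_t^2L_t^2\,\dd\theta$. Subtracting $8\pi I(t)$ then leaves the exact remainder $2\pi\int_0^{2\pi}G_t^2r_t'^2\,\dd\theta=2\pi\int_0^{2\pi}\varphi_\theta^2L_t^2/r_t^2\,\dd\theta$, which in your notation is $2\pi\int q\,(1+q/P^2)$. The paper then needs only the pointwise bound $|r_t'|/r_t\le4|\nabla\varphi|/(br_t)\to0$ to get $L_t^2/r_t^2\le2$.

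You apply Cauchy--Schwarz only to $\int P$. That leaves the cross term $2(\int P)(\int q/P)$ and the square $(\int q/P)^2$ to be estimated separately. Handling the cross term forces you to compare $P$ at different angles, i.e.\ to prove $\max P/\min P\to1$ via $r_t^2=\frac4b(\varphi-t)$ and $\varphi=o(|x|)$. That input is available from \eqref{6-3}, so nothing is missing.

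The paper's version is shorter and uses only local information along $\Gamma_t$. Your decomposition is more transparent: $\delta(t)$ splits into the nonpositive term $(\int P)^2-2\pi\int P^2$ plus terms controlled by the angular energy. Both routes actually give the sharper constant $2\pi+o(1)$ in place of $4\pi$.
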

	
	\begin{proof}
		Since $E_\Phi(t)$ is connected, one has
		$$
		S(t)=\left(\int_{E_\Phi(t)}\rho\right)^2.
		$$
		Using the divergence theorem, \eqref{4-18}, and \eqref{6-16}, we get
		\begin{equation}\label{6-18}
			s(t)=\left(\int_{\Gamma_t}|\nabla\Phi|\,\dd\cH^1\right)^2=\left(\int_0^{2\pi}G_tL_t\,\dd\theta\right)^2,\qquad8\pi I(t)=2\pi\int_0^{2\pi}r_t^2G_t^2\,\dd\theta.
		\end{equation}
		Hence, by the Cauchy--Schwarz inequality, we have
		\begin{equation}\label{6-19}
			\delta(t)=S(t)-8\pi I(t) \notag \leq2\pi\int_0^{2\pi}G_t^2L_t^2\,\dd\theta-2\pi\int_0^{2\pi}r_t^2G_t^2\,\dd\theta \notag=2\pi\int_0^{2\pi}G_t^2r_t'^2\,\dd\theta.
		\end{equation}
		Differentiating $\Phi\bigl(r_t(\theta),\theta\bigr)=t$ with respect to $\theta$ yields $\Phi_r r_t'+\Phi_\theta=0$. Since the quadratic part of $\Phi$ is radial, $\Phi_\theta=\varphi_\theta$. Moreover, we have
		$$
		G_t^2=\Phi_r^2+\frac{\Phi_\theta^2}{r_t^2}=\Phi_r^2\frac{L_t^2}{r_t^2}.
		$$
		Consequently,
		\begin{equation}\label{6-20}
			G_t^2r_t'^2=|\varphi_\theta|^2\frac{L_t^2}{r_t^2},
		\end{equation}
		where the quantities on the right-hand side are evaluated at
		$\bigl(r_t(\theta),\theta\bigr)$.
		
		As $t\to-\infty$, the curves $\Gamma_t$ escape uniformly to infinity.
		Using \eqref{6-10}, one has
		$$
		\frac{|r_t'|}{r_t}=\frac{|\varphi_\theta|}{r_t|\Phi_r|}\leq\frac{|\nabla\varphi|}{|\Phi_r|}\leq\frac{4|\nabla\varphi|}{br_t}\to0\quad\text{uniformly in $\theta$.}
		$$
		 Thus, after decreasing $t_1$ if necessary, we have
		$$
		\frac{L_t^2}{r_t^2}=1+\frac{r_t'^2}{r_t^2}\leq2\qquad\text{for every }t<t_1.
		$$
		Combining this with \eqref{6-19} yields \eqref{6-17}. The proof is thus complete.
	\end{proof}

	We are now ready to complete the vanishing argument.
	
	\begin{proposition}\label{Pro6-5}
		For every regular value $t<\M$, $\delta(t)=0$. Consequently, the defect measure $\mu$ in Proposition \ref{Pro4-4} vanishes identically.
	\end{proposition}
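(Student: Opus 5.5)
The plan is to use the monotonicity of $\delta$ from Proposition~\ref{Pro4-4} together with Lemma~\ref{le6-4}. Since $\delta(t)=\mu((t,\M))\ge0$ is nonincreasing in $t$, it suffices to produce regular values $t_k\to-\infty$ with $\delta(t_k)\to0$. Indeed, for any regular $t<\M$ and $t_k<t$ we then have $0\le\delta(t)\le\delta(t_k)\to0$. Hence $\mu((t,\M))=0$ for all regular $t$, and since regular values are dense and unbounded below, $\mu\equiv0$ on $(-\infty,\M)$. All $t<t_0$ are regular by Lemma~\ref{le6-2}, so there are no regularity issues at low levels.

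To produce such $t_k$, I will average the bound \eqref{6-17} over $t\in J_R$ and convert the level integral into an area integral over the annulus $A_R:=\{R<|x|<2R\}$ via the coarea/polar change of variables. For fixed $\theta$, the map $r\mapsto\Phi(re^{i\theta})$ is strictly decreasing on $[R,2R]$ with $|\partial_r\Phi|\ge\frac b4 r$ by \eqref{6-10}, and for $t\in J_R$ the point $r_t(\theta)$ lies in $(R,2R)$ by \eqref{6-14}. Changing variables $t=\Phi(re^{i\theta})$ gives
\[
\int_{J_R}|\varphi_\theta(r_t(\theta),\theta)|^2\,\dd t\le\int_R^{2R}|\varphi_\theta(r,\theta)|^2|\partial_r\Phi|\,\dd r\le C\int_R^{2R}|\varphi_\theta|^2\, r\,\dd r,
\]
using $|\partial_r\Phi|\le |\nabla\varphi|+\frac b2 r\le Cr$. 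Integrating in $\theta$ and using $|\varphi_\theta|=r|\partial_\tau\varphi|\le r|\nabla\varphi|$, we get
\[
\frac1{|J_R|}\int_{J_R}\delta(t)\,\dd t\le \frac{C}{c_bR^2}\int_{A_R}r^2|\partial_\tau\varphi|^2\,\dd x\le C'\int_{A_R}|\partial_\tau\varphi|^2\,\dd x,
\]
by \eqref{6-13}. So the whole matter reduces to showing $\int_{A_{R_k}}|\partial_\tau\varphi|^2\,\dd x\to0$ along some $R_k\to\infty$. Given that, a mean-value choice yields $t_k\in J_{R_k}$ with $\delta(t_k)\to0$, and $t_k\le t^+_{R_k}\to-\infty$.

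The main obstacle is this last step, since $\nabla\varphi$ is only $o(1)$, which gives $\int_{A_R}|\nabla\varphi|^2=o(R^2)$ and is useless here. Following the outline, I would subtract the radial logarithmic part: set $w:=\varphi+\frac{\Gamma}{2\pi}\log|x|$ with $\Gamma:=\int g$, which does not change the angular derivative, so $\partial_\tau\varphi=\partial_\tau w$. By \eqref{2-6-1}, $\nabla w(x)=\sigma\int[\kappa(x-y)-\kappa(x)]\omega_0(y)\,\dd y$ with $\kappa$ as in Section~\ref{s6}. This is exactly the form to which Lemma~\ref{Lem2-3} (the appendix estimate already used in \eqref{7-13}, for $L^1\cap L^\infty$ densities) applies, giving $R_k\to\infty$ with $\int_{A_{R_k}}|\nabla w|^2\to0$. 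I expect the only delicate check to be that Lemma~\ref{Lem2-3} is stated for signed densities, or else to apply it separately to $g^\pm$ and add the results. This completes the plan.
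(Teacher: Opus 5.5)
Your proposal is correct and follows essentially the same route as the paper: average \eqref{6-17} over $J_{R_k}$ using the change of variables $t=\Phi(r,\theta)$, bound the angular energy by $\nabla w$ after removing the logarithmic part, invoke Lemma~\ref{Lem2-3}, and conclude by the monotonicity of $\delta$ and continuity of $\mu$ from below. On the point you flagged: Lemma~\ref{Lem2-3} is already stated for signed $g\in L^1\cap L^\infty$ (the paper applies it with $g=\omega_0$), so no splitting is needed; the fallback of applying it to $g^\pm$ separately would not work directly anyway, because the lemma only gives a subsequence that depends on $g$, and the two subsequences need not coincide.
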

	
	\begin{proof}
		In view of Proposition \ref{Pro4-4}, we have $\delta(t):=\mu((t,\M))$. Set
		\begin{equation*}
			w(x):=\psi(x)-\frac{1}{2\pi} \left(\int_{\R^2}\omega_0(y)\,\dd y\right) \log\frac{1}{|x|}.
		\end{equation*}
		Then
		\begin{equation*}
			\nabla w(x)=\int_{\R^2}\bigl[\kappa(x-y)-\kappa(x)\bigr]\omega_0(y)\,\dd y.
		\end{equation*}
		where $\kappa(x)$ is defined by $\kappa(x):=-\frac{1}{2\pi}\frac{x}{|x|^2}$. Let $R_k\to\infty$ be the sequence furnished by Lemma \ref{Lem2-3} with $Q=\nabla w$ and $g=\omega_0$. Since $|w_\theta|=|\varphi_\theta|$, we have 
		\begin{equation}\label{6-22}
			\mathcal E_k:=\int_{R_k<|x|<2R_k}|\nabla w|^2\,\dd x\to0,\ \ \ \int_{R_k}^{2R_k}\int_0^{2\pi}\frac{|\varphi_\theta(r,\theta)|^2}{r}\,\dd\theta\dd r \leq\mathcal E_k.
		\end{equation}
		Write $J_k:=J_{R_k}$. By Lemma \ref{le6-2}, for all sufficiently large
		$k$, we have
		$$
		J_k\subset(-\infty,t_1),\qquad|J_k|\geq c_bR_k^2,
		$$
		and
		$$
		R_k<r_t(\theta)<2R_k\qquad\text{for }(t,\theta)\in J_k\times\T.
		$$
		In particular, every $t\in J_k$ is a regular value. For each fixed $\theta$, the map
		$
		r\mapsto\Phi(re^{i\theta})
		$
		is strictly decreasing on $[R_k,2R_k]$. Its inverse on $J_k$ is $t\mapsto r_t(\theta)$, so the change of variables
		$$
		t=\Phi(r,\theta),\qquad\dd t=\Phi_r(r,\theta)\dd r
		$$
		is valid. Let $I_{k,\theta}\subset(R_k,2R_k)$ denote the interval corresponding to $J_k$. Integrating \eqref{6-17} over $J_k$ implies
		\begin{align}
			\int_{J_k}\delta(t)\,\dd t&\leq4\pi\int_0^{2\pi}\int_{I_{k,\theta}}|\varphi_\theta(r,\theta)|^2\bigl(-\Phi_r(r,\theta)\bigr)\,\dd r\dd\theta \notag\\
			&\leq CR_k\int_{R_k}^{2R_k}\int_0^{2\pi}|\varphi_\theta(r,\theta)|^2\,\dd\theta\dd r \notag\\
			&\leq CR_k^2\int_{R_k}^{2R_k}\int_0^{2\pi}\frac{|\varphi_\theta(r,\theta)|^2}{r}\,\dd\theta\dd r \notag\\
			&\leq CR_k^2\mathcal E_k.
			\label{6-23}
		\end{align}
		Here we used
		$$
		-\Phi_r=\frac{b}{2}r-\varphi_r\leq CR_k\qquad\text{on }R_k<r<2R_k
		$$
		for all sufficiently large $k$. Since $|J_k|\geq c_bR_k^2$, there exists a $t_k\in J_k$ such that
		\begin{equation}\label{6-24}
			0\leq\delta(t_k)\leq\frac{1}{|J_k|}\int_{J_k}\delta(t)\,\dd t\leq C\mathcal E_k\to0.
		\end{equation}
		Moreover,
		$$
		t_k<t_{R_k}^+=-\frac{b}{4}R_k^2+o(R_k),
		$$
		and hence $t_k\to-\infty$. Fix any regular value $t<\M$. For all sufficiently large $k$, we have $t_k<t$. Since $\delta$ is nonincreasing, we have
		$$
		0\leq\delta(t)\leq\delta(t_k)\to0.
		$$
		This proves $\delta(t)=0$.
		
		Finally, choose a decreasing sequence of regular values
		$s_j\to-\infty$. Then
		$$
		\mu((s_j,\M))=\delta(s_j)=0\qquad\text{for every }j.
		$$
		Since the intervals $(s_j,\M)$ increase to $(-\infty,\M)$, continuity
		from below implies
		$$
		\mu((-\infty,\M))=\lim_{j\to\infty}\mu((s_j,\M))=0.
		$$
		Thus $\mu\equiv0$. This finishes the proof of the proposition.
	\end{proof}
	
	As we mentioned before, for $\Omega\neq 0$, Theorem \ref{main} follows from Proposition \ref{Pro5-6} and \ref{Pro6-5}.

	\appendix
	
	\section{Potential estimates} \label{app1}
	In this appendix, we collect several basic estimates used in the proof of the main theorem.
	
	\begin{lemma}\label{le3-0}
		For every $g\in L^1(\R^2)\cap L^\infty(\R^2)$, one has
		\begin{equation}\label{2-1}
			\sup_{x\in\R^2}\int_{\R^2}\frac{|g(y)|}{|x-y|}\dd y\le (2\pi+1)\|g\|_{L^1}^{1/2}\|g\|_{L^\infty}^{1/2}.
		\end{equation}
		Moreover, $K*g$ is continuous on $\R^2$, and
		\begin{equation}\label{2-2}
			\lim_{|x|\to\infty}K*g(x)=0 .
		\end{equation}
	\end{lemma}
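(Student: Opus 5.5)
The plan is to split the integral at the scale $r:=\|g\|_{L^1}^{1/2}\|g\|_{L^\infty}^{-1/2}$ (for $g\ne0$; the case $g=0$ is trivial). For fixed $x$ write
\[
\int_{\R^2}\frac{|g(y)|}{|x-y|}\dd y=\int_{|x-y|<r}\frac{|g(y)|}{|x-y|}\dd y+\int_{|x-y|\ge r}\frac{|g(y)|}{|x-y|}\dd y .
\]
The first piece is bounded by $\|g\|_{L^\infty}\int_{B_r}|z|^{-1}\dd z=2\pi r\|g\|_{L^\infty}$. The second is bounded by $r^{-1}\|g\|_{L^1}$. With the chosen $r$, the sum is $(2\pi+1)\|g\|_{L^1}^{1/2}\|g\|_{L^\infty}^{1/2}$, uniformly in $x$, which gives \eqref{2-1}. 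Since $|K(z)|=\frac{1}{2\pi|z|}$, this also shows that $K*g$ is well defined everywhere and bounded.

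For continuity, I would fix $x_0$ and a radius $\varepsilon>0$ and split $K=K\1_{B_\varepsilon}+K\1_{B_\varepsilon^c}$. The near part contributes at most $\|g\|_{L^\infty}\int_{B_\varepsilon}|K|=\varepsilon\|g\|_{L^\infty}$ at every point, uniformly in $x$. The far kernel $K\1_{B_\varepsilon^c}$ is bounded by $\frac{1}{2\pi\varepsilon}$ and is continuous off the circle $|z|=\varepsilon$, which has measure zero. Hence $x\mapsto\int K\1_{B_\varepsilon^c}(x-y)g(y)\dd y$ is continuous by dominated convergence with dominating function $\frac{|g|}{2\pi\varepsilon}\in L^1$. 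Letting $\varepsilon\to0$, we see that $K*g$ is a uniform limit of continuous functions, and therefore continuous.

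For the decay \eqref{2-2}, given $\eta>0$, I would first choose $R$ with $\int_{|y|>R}|g|\le\eta$ and write $g=g\1_{B_R}+g\1_{B_R^c}$. Applying \eqref{2-1} to the tail gives
\[
|K*(g\1_{B_R^c})|\le C\,\eta^{1/2}\|g\|_{L^\infty}^{1/2}
\]
everywhere. For the compactly supported part, when $|x|>2R$ we have $|x-y|\ge|x|/2$ on the support, so $|K*(g\1_{B_R})(x)|\le\frac{\|g\|_{L^1}}{\pi|x|}\to0$. Combining the two estimates gives $\limsup_{|x|\to\infty}|K*g(x)|\le C\eta^{1/2}\|g\|_{L^\infty}^{1/2}$, and since $\eta$ is arbitrary the limit is zero.

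None of these steps is a real obstacle. The only point needing care is the tail argument for the decay, where one must use the interpolation bound \eqref{2-1}, and not just the $L^1$ norm, to control the far mass of $g$ uniformly in $x$.
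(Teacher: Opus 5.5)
Your proof is correct and follows the paper's argument. For \eqref{2-1} you split at the same radius $r=\|g\|_{L^1}^{1/2}\|g\|_{L^\infty}^{-1/2}$, and for \eqref{2-2} you use the same decomposition $g=g\1_{B_R}+g\1_{B_R^c}$, with \eqref{2-1} controlling the tail uniformly. The only addition is your explicit kernel-truncation and dominated-convergence argument for the continuity of $K*g$. The paper just calls this step standard, and you carry it out correctly.
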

	
	\begin{proof}
		Fix $x\in\R^2$ and $r>0$. We have
		\[
		\int_{\R^2}\frac{|g(y)|}{|x-y|}\dd y\le \|g\|_\infty\int_{|z|<r}\frac{\dd z}{|z|}  +\frac1r\|g\|_1\le 2\pi\|g\|_\infty r+\frac1r\|g\|_1.
		\]
		If $\|g\|_\infty=0$, then \eqref{2-1} is immediate. Otherwise, $\|g\|_1>0$, and \eqref{2-1} follows by taking $r=(\|g\|_1/\|g\|_\infty)^{1/2}$. The continuity of $K*g$ is standard. To prove \eqref{2-2}, fix $M>0$ and decompose
		\[
		g=g\1_{B_M}+g\1_{\R^2\setminus B_M}.
		\]
		If $|x|>2M$, then
		\[
		\bigl|K*(g\1_{B_M})(x)\bigr|\le \frac{C}{|x|}\|g\|_1.
		\]
		On the other hand, applying \eqref{2-1} to the second term yields
		\[
		\|K*(g\1_{\R^2\setminus B_M})\|_\infty\le C\|g\|_\infty^{1/2} \|g\1_{\R^2\setminus B_M}\|_1^{1/2}.
		\]
		Letting first $|x|\to\infty$ with $M$ fixed, and then $M\to\infty$, we obtain \eqref{2-2}. The proof is thus complete.
	\end{proof}
	
	Let
	\begin{equation}\label{2-0}
		\mathcal{N}(x)=\frac{1}{2\pi}\log\frac{1}{|x|},\ \ \ \kappa(x)=\nabla N(x)=-\frac{1}{2\pi}\frac{x}{|x|^2}.
	\end{equation}
	
	\begin{lemma}\label{lem3-2}
		If $E\subset\R^2$ is measurable with $0<|E|<\infty$, then for $g\in L^1(\R^2)\cap L^\infty(\R^2)$, one has
		\begin{equation}\label{2-3}
			\int_E |\kappa*g|^2\,\dd x\le C m^2
			\left[ 1+\log^+\left( \frac{|E|^{1/2}M^{1/2}}{m^{1/2}}
			\right)\right],
		\end{equation}
		where the right-hand side is understood to be zero when $m=0$, and
		$
		m:=\|g\|_{L^1},\  M:=\|g\|_{L^\infty}
		$.
	\end{lemma}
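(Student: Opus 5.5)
We need to prove $\int_E|\kappa*g|^2\le Cm^2[1+\log^+(|E|^{1/2}M^{1/2}/m^{1/2})]$. The plan is to split the kernel at a scale $r$ and use the pointwise bound for the near part and an $L^2$ bound for the far part. Throughout, $m=\|g\|_{L^1}$ and $M=\|g\|_{L^\infty}$; if $m=0$ then $g=0$ a.e. and there is nothing to prove, so we assume $m>0$, and then also $M>0$. By scaling invariance, $g_\lambda(x)=g(\lambda x)$ has $\kappa*g_\lambda(x)=\lambda^{-1}(\kappa*g)(\lambda x)$, and both sides scale the same way. Hence we could normalize, but a direct argument is simpler.

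\emph{Step 1 (near part).} Fix $r>0$ and write $\kappa=\kappa\1_{B_r}+\kappa\1_{B_r^c}$. Since $|\kappa(z)|=\frac{1}{2\pi|z|}$, Young's inequality gives $\|(\kappa\1_{B_r})*g\|_{L^2}\le\|\kappa\1_{B_r}\|_{L^1}\|g\|_{L^2}\le r\,\|g\|_{L^2}\le r\,m^{1/2}M^{1/2}$. Therefore
\[
\int_E|(\kappa\1_{B_r})*g|^2\le r^2 mM .
\]

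\emph{Step 2 (far part).} Here $|\kappa\1_{B_r^c}|\le\frac{1}{2\pi r}$ pointwise, which gives only $|E|m^2/r^2$. That bound is too weak to produce a logarithm, so the far part must be split dyadically. For $r\ge\ell:=(|E|/\pi)^{1/2}$, the pointwise bound on each annulus handles the contributions. For the annuli $A_j=\{2^{j}r\le|z|<2^{j+1}r\}$ with $2^jr\le\ell$, use Young's inequality in $L^1\to L^2$ form: $\|(\kappa\1_{A_j})*g\|_{L^2}\le\|\kappa\1_{A_j}\|_{L^2}\,m$, and $\|\kappa\1_{A_j}\|_{L^2}^2=\frac{1}{2\pi}\log2$. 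Summing these terms in $L^2$ norm would give a factor $\log$, which squared is $\log^2$. That is not good enough.

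\emph{Step 2$'$ (the main obstacle).} The better route is to bound the whole intermediate kernel $\kappa\1_{\{r\le|z|<\ell\}}$ at once, because its $L^2$ norm squared is exactly $\frac{1}{2\pi}\log(\ell/r)$. Young's inequality then gives
\[
\int|(\kappa\1_{\{r\le|z|<\ell\}})*g|^2\le\frac{m^2}{2\pi}\log^+(\ell/r).
\]
The tail $\kappa\1_{\{|z|\ge\ell\}}$ is bounded by $\frac{1}{2\pi\ell}$, so its square integrated over $E$ is at most $\frac{|E|m^2}{4\pi^2\ell^2}\le Cm^2$.

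\emph{Step 3 (choice of $r$).} Take $r=(m/M)^{1/2}$, so that $r^2mM=m^2$. If $r\ge\ell$, the intermediate piece is absent and we apply the tail bound with $r$ in place of $\ell$. Otherwise $\log(\ell/r)=\log\bigl(|E|^{1/2}M^{1/2}/(\pi^{1/2}m^{1/2})\bigr)\le\log^+\bigl(|E|^{1/2}M^{1/2}m^{-1/2}\bigr)$. Combining the three pieces via $(a+b+c)^2\le3(a^2+b^2+c^2)$ yields the stated estimate. Thus the real difficulty is only to avoid the naive dyadic summation, which loses the square of the logarithm; treating the intermediate range with a single $L^2$ kernel bound removes this loss.
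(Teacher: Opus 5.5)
Your argument is correct, but it takes a different route from the paper. The paper works with the distribution function of $\kappa*g$. It combines three ingredients:
\begin{itemize}
\item the weak-type Hardy--Littlewood--Sobolev bound $|\{|\kappa*g|>\lambda\}|\le Cm^2/\lambda^2$;
\item the pointwise bound $\|\kappa*g\|_{L^\infty}\le C(mM)^{1/2}=:L$ from Lemma~\ref{le3-0};
\item the layer-cake formula, with the $\lambda$-integral split at $\lambda_0=m|E|^{-1/2}$.
\end{itemize}
In that proof the logarithm comes from $\int_{\lambda_0}^{L}\dd\lambda/\lambda$.

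You instead split the kernel in physical space at the two scales $r=(m/M)^{1/2}$ and $\ell=(|E|/\pi)^{1/2}$. Your logarithm comes from the exact identity $\|\kappa\1_{\{r\le|z|<\ell\}}\|_{L^2}^2=\frac{1}{2\pi}\log(\ell/r)$ combined with Young's inequality $L^2*L^1\to L^2$. Since $\ell/r$ is comparable to $L/\lambda_0$, your two cut-offs correspond exactly to the paper's.

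What each route buys:
\begin{itemize}
\item Yours is more elementary and self-contained. It uses only the strong-type Young inequality and $\|g\|_{L^2}^2\le mM$, gives explicit constants, and needs no weak-type HLS. Your observation that bounding the whole intermediate range with a single $L^2$ kernel norm avoids the $\log^2$ loss of dyadic summation is exactly the right point.
\item The paper's is shorter once weak-type HLS is taken as a black box. It also isolates a general mechanism: weak $L^2$ plus $L^\infty$ on a set of finite measure gives $L^2$ with a logarithmic loss.
\end{itemize}

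The exploratory remarks in your Step~2 can be dropped. Steps~1, 2$'$ and~3 alone constitute the proof.
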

	
	\begin{proof}
		Since $|\kappa(x)|\le C|x|^{-1}$, the convolution is
		pointwise controlled by the Riesz potential of order one:
		\[
		|\kappa*g(x)|\le C\int_{\R^2}\frac{|g(y)|}{|x-y|}\,\dd y=C I_1(|g|)(x).
		\]
		The weak Hardy--Littlewood--Sobolev inequality(see, for instance, \cite{Stein1970,Lieb2001}). therefore implies
		\begin{equation}\label{2-4}
			\bigl|\{x\in\R^2:|\kappa*g(x)|>\lambda\}\bigr|\le\frac{C m^2}{\lambda^2},\qquad \lambda>0.
		\end{equation}

		On the other hand, the estimate \eqref{2-1} yields
		\begin{equation}\label{2-5}
			\|\kappa*g\|_{L^\infty}\le C M^{1/2}m^{1/2}=:L.
		\end{equation}
		There is nothing to prove if $m=0$, so assume $m>0$ and set
		$\lambda_0:=m|E|^{-1/2}$.
		Using the layer-cake representation(\cite{Lieb2001}) together with \eqref{2-4}, we obtain
		\begin{align*}
			\int_E |\kappa*g|^2\,\dd x
			&=
			\int_0^L2\lambda\,\bigl|E\cap\{|\kappa*g|>\lambda\}\bigr|\,\dd\lambda
			\le|E|\lambda_0^2+C m^2\int_{\lambda_0}^L\frac{\dd\lambda}{\lambda},
		\end{align*}
		where the last integral is understood to be zero if $L\le\lambda_0$.
		Since $|E|\lambda_0^2=m^2$, it follows that
		\begin{equation}\label{621}
		\int_E |\kappa*g|^2\,\dd x\le C m^2
		\left[1+\log^+\left(\frac{L}{\lambda_0}\right)
		\right].
		\end{equation}
		Finally, by \eqref{2-5}, one has
		\begin{equation}\label{622}
        \frac{L}{\lambda_0}\le C\frac{|E|^{1/2}M^{1/2}}{m^{1/2}}.
		\end{equation}
		Substituting \eqref{622} into \eqref{621} yields \eqref{2-3}, we obtain
		\[
		\int_E |\kappa*g|^2\,\dd x\le C m^2
		\left[1+\log^+\left(\frac{|E|^{1/2}M^{1/2}}{m^{1/2}}\right)\right].
		\] This finishes the proof of the lemma.
	\end{proof}
	
	For \(g\in L^1(\mathbb{R}^2)\cap L^\infty(\mathbb{R}^2)\), we introduce the renormalized potential
	\begin{equation}\label{2-9}
		Q(x)=\int_{\mathbb{R}^2}\bigl[\kappa(x-y)-\kappa(x)\bigr]g(y)\,\dd y .
	\end{equation}
	We will show that there exists a sequence of large dyadic annuli along which the $L^2$ energy of $Q$ vanishes. To this end, for each $j\geq2$, define
	\begin{equation}\label{2-11}
		\mathcal A_j:=\{x\in\R^2:2^j<|x|<2^{j+1}\}\text{ and } m_j:=\int_{\left\{2^{j-1}<|y|<2^{j+2}\right\}}|g(y)|\,\dd y.
	\end{equation}
	Since the enlarged annuli have uniformly bounded overlap, we have
	\begin{equation}\label{2-12}
		\sum_{j=2}^{\infty}m_j\leq4\|g\|_{L^1(\R^2)}.
	\end{equation}

	\begin{lemma}\label{Lem2-3}
		There exists a sequence of integers $j_k\to\infty$ such that
		\begin{equation}\label{2-13}
			\int_{\left\{2^{j_k}<|x|<2^{j_k+1}\right\}}|Q(x)|^2\,\dd x\to0\qquad\text{as }k\to\infty.
		\end{equation}
	\end{lemma}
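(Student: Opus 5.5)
The strategy is to show that the energies $\mathcal{Q}_j:=\int_{\mathcal A_j}|Q|^2\,\dd x$ are summable along a suitable decomposition, or at least have a vanishing subsequence. The natural route is to prove a bound of the form
\[
\mathcal Q_j\le C\Bigl(m_j^2\bigl[1+\log^+(2^{j}M^{1/2}m_j^{-1/2})\bigr]+\tau_j^2\Bigr),
\]
where $\tau_j\to0$ collects the near-field-free contributions. Combined with \eqref{2-12}, this forces $\liminf_j\mathcal Q_j=0$. Fix $j\ge2$ and $x\in\mathcal A_j$, and split $g=g_1+g_2+g_3$ with
\[
g_1:=g\1_{\{|y|\le 2^{j-1}\}},\qquad g_2:=g\1_{\{2^{j-1}<|y|<2^{j+2}\}},\qquad g_3:=g\1_{\{|y|\ge2^{j+2}\}},
\]
and write $Q=Q_1+Q_2+Q_3$ accordingly.

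\emph{Inner part.} For $|y|\le 2^{j-1}\le |x|/2$ the mean value theorem gives $|\kappa(x-y)-\kappa(x)|\le C|y|/|x|^2$. Hence
\[
|Q_1(x)|\le C4^{-j}\int_{|y|\le2^{j-1}}|y||g|\,\dd y,
\]
and $\int_{\mathcal A_j}|Q_1|^2\le C\bigl(2^{-j}\int_{|y|\le 2^{j-1}}|y||g|\bigr)^2=:C\tau_j^2$. Here $\tau_j\to0$ by dominated convergence, since $2^{-j}|y|\1_{|y|\le2^{j-1}}\le 1/2$ and tends to $0$ pointwise.

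\emph{Outer part.} For $|y|\ge 2^{j+2}\ge2|x|$ we have $|x-y|\ge|y|/2$. This gives $|Q_3(x)|\le C\int_{|y|\ge2^{j+2}}|g|/|y|\,\dd y+C2^{-j}\|g_3\|_1\le C2^{-j}\|g_3\|_1$, so $\int_{\mathcal A_j}|Q_3|^2\le C\|g\1_{\{|y|\ge 2^{j+2}\}}\|_1^2\to0$.

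\emph{Middle part.} Write $Q_2=\kappa*g_2-\kappa(x)\int g_2$. The second term contributes at most $C m_j^2$ on $\mathcal A_j$. For the first, Lemma~\ref{lem3-2} with $E=\mathcal A_j$ gives $\int_{\mathcal A_j}|\kappa*g_2|^2\le Cm_j^2[1+\log^+(C2^{j}M^{1/2}m_j^{-1/2})]$.

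The main obstacle is this logarithmic factor: $m_j\to0$ while the logarithm grows like $j$. It therefore suffices to find $j$ with $m_j^2\, j\to0$ and $m_j^2\log^+(m_j^{-1/2})\to0$. The second condition is automatic as $m_j\to0$, since $s^2\log(1/s)\to0$. For the first, note that $\sum m_j<\infty$ by \eqref{2-12}. If $m_j\ge j^{-1}$ held for all large $j$, the sum would diverge, so there are infinitely many $j$ with $m_j<1/j$, and along these $j m_j^2\le 1/j\to0$. Choosing $j_k$ along this subsequence, all three pieces tend to zero, and $(a+b+c)^2\le3(a^2+b^2+c^2)$ yields \eqref{2-13}.
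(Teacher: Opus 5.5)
Your proposal is correct and follows the paper's proof essentially step for step: the same three-region split at $|y|=2^{j-1}$ and $|y|=2^{j+2}$, the same mean-value and far-field bounds, Lemma~\ref{lem3-2} on the middle annulus, and a subsequence with $j m_j^2\to0$ extracted from the summability \eqref{2-12}. The differences are cosmetic: you select the indices with $m_j<1/j$ directly, whereas the paper argues by contradiction with $m_j\ge c j^{-1/2}$, and you justify the vanishing of the inner term explicitly by dominated convergence.
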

	
	\begin{proof}
		Fix $j\ge2$ and set $R:=2^j$. Recalling \eqref{2-9}, we decompose $Q=I_j+L_j+O_j$, where
		\begin{align*}
			I_j(x)
			&:=\int_{\left\{|y|\leq R/2\right\}}\bigl[\kappa(x-y)-\kappa(x)\bigr]g(y)\,\dd y,\\
			L_j(x)
			&:=\int_{\left\{R/2<|y|<4R\right\}}\bigl[\kappa(x-y)-\kappa(x)\bigr]g(y)\,\dd y,\\
			O_j(x)
			&:=\int_{\left\{|y|\geq 4R\right\}}\bigl[\kappa(x-y)-\kappa(x)\bigr]g(y)\,\dd y.
		\end{align*}
		We estimate these three terms one by one.
        
		\smallskip
		\noindent\emph{Step 1. The inner region.}
		Let $x\in\mathcal A_j$ and $|y|<R/2$. For every $s\in[0,1]$, we have
		\[
		|x-sy|\ge |x|-s|y|\ge \frac R2.
		\]
		The mean value theorem, together with the estimate
		$|\nabla\kappa(z)|\le C|z|^{-2}$, therefore gives
		\[
		|\kappa(x-y)-\kappa(x)|\le C\frac{|y|}{R^2}.
		\]
		Consequently,
		\[
		\sup_{x\in\mathcal A_j}|I_j(x)|
		\le\frac{C}{R^2}\int_{|y|<R/2}|y|\,|g(y)|\,\dd y.
		\]
		Since $|\mathcal A_j|\le CR^2$, one has
		\begin{equation}\label{2-100}
			\int_{\mathcal A_j}|I_j|^2\,\dd x
			\le C\left[\frac1R\int_{|y|<R/2}|y|\,|g(y)|\,\dd y\right]^2
			\to0
			\qquad\text{as }j\to\infty,
		\end{equation}
		where the convergence follows from $g\in L^1(\R^2)$.

		\smallskip
		\noindent\emph{Step 2. The outer region.}
		If $x\in\mathcal A_j$ and $|y|>4R$, then
		\[
		|x-y|\ge |y|-|x|\ge\frac{|y|}{2}.
		\]
		Using $|\kappa(z)|\le C|z|^{-1}$ and $|x|\ge R$, yields
		\[
		|\kappa(x-y)-\kappa(x)|\le \frac{C}{R}.
		\]
		For $g\in L^1(\R^2)$, one has
		\begin{equation}\label{2-14}
			\int_{\mathcal A_j}|O_j|^2\,\dd x\le C\left(\int_{|y|>4R}|g(y)|\,\dd y\right)^2\to0\qquad\text{as }j\to\infty.
		\end{equation}

		\smallskip
		\noindent\emph{Step 3. The middle region.}
		Define
		\[
		h_j:=g\1_{\{R/2<|y|<4R\}},\qquad\gamma_j:=\int_{\R^2}h_j(y)\,\dd y.
		\]
		Then $L_j=\kappa*h_j-\gamma_j\kappa$. If $m_j=0$, then $h_j=0$ almost everywhere, and hence $L_j=0$.
		Suppose that $m_j>0$. Applying Lemma \ref{lem3-2} with
		$E=\mathcal A_j$, and recalling that
		$|\mathcal A_j|^{1/2}\le CR$, we find
		\begin{align*}
			\int_{\mathcal A_j}|\kappa*h_j|^2\,\dd x
			&\le Cm_j^2
			\left[
			1+\log^+\left(\frac{CR\|g\|_{L^\infty}^{1/2}}{m_j^{1/2}}
			\right)
			\right] \\
			&\le Cm_j^2\bigl(1+j+|\log m_j|\bigr),
		\end{align*}
		where the fixed quantity $\|g\|_{L^\infty}$ has been incorporated into the constant. Moreover, $|\gamma_j|\le m_j$, while
		\[
		\int_{\mathcal A_j}|\kappa(x)|^2\,\dd x\le C.
		\]
		Hence
		\[
		\int_{\mathcal A_j}|\gamma_j\kappa(x)|^2\,\dd x \le Cm_j^2.
		\]
		We therefore have
		\begin{equation}\label{2-15}
			\int_{\mathcal A_j}|L_j|^2\,\dd x\le Cm_j^2\bigl(1+j+|\log m_j|\bigr).
		\end{equation}
		
		Combining \eqref{2-100}, \eqref{2-14}, and
		\eqref{2-15}, we arrive at
		\begin{equation}\label{2-16}
			\int_{\mathcal A_j}|Q|^2\,\dd x
			\le\varepsilon_j+Cm_j^2\bigl(1+j+|\log m_j|\bigr),\qquad \varepsilon_j\to 0.
		\end{equation}
		By \eqref{2-12}, the sequence $(m_j)_{j\ge2}$ is summable. In particular,
		$m_j\to0$, and
		\[
		m_j^2\bigl(1+|\log m_j|\bigr)\to0,
		\]
		where, as usual, $r^2|\log r|$ is taken to be zero at $r=0$. It remains to handle the term $jm_j^2$. There must be a sequence
		$j_k\to\infty$ for which $j_km_{j_k}^2\to0$. Indeed, otherwise $\liminf_{j\to\infty}jm_j^2>0$, so that
		\[
		m_j\ge c j^{-1/2}.
		\]
		for all sufficiently large $j$ and some $c>0$. This is incompatible with \eqref{2-12}. Evaluating \eqref{2-16} at
		$j=j_k$, we obtain
		\[
		\int_{\mathcal A_{j_k}}|Q|^2\,\dd x \le \varepsilon_{j_k}+Cm_{j_k}^2+Cj_km_{j_k}^2+Cm_{j_k}^2|\log m_{j_k}|\to0
		\]
		as $k\to \infty$, which is \eqref{2-13}. The proof is thus complete.
	\end{proof}

	\begin{lemma}\label{lem7-7}
		Let $h\in W_{\loc}^{1,1}(\R^2)$ and suppose that
		$\nabla h\in L^1(\R^2;\R^2)$. Then there exists a unique constant
		$c_\infty\in\R$ such that
		$
		h-c_\infty\in L^2(\R^2)$ and
		$$\|h-c_\infty\|_{L^2(\R^2)}
		\leq
		C\|\nabla h\|_{L^1(\R^2)}.
		$$
	\end{lemma}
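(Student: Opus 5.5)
This is the two-dimensional Gagliardo--Nirenberg--Sobolev inequality $\|u\|_{L^2}\le C\|\nabla u\|_{L^1}$ (the case $p=1$, $n=2$, with exponent $1^*=2$), combined with the standard argument that the constant at infinity is well defined. The plan is to reduce to the smooth compactly supported case, prove that case by the classical Gagliardo argument, and then construct $c_\infty$ by truncation.

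\emph{Step 1 (smooth compactly supported case).} For $u\in C_c^1(\R^2)$ we have
\[
|u(x)|\le\int_\R|\partial_1u(s,x_2)|\,\dd s=:g_1(x_2)
\qquad\text{and}\qquad
|u(x)|\le\int_\R|\partial_2u(x_1,s)|\,\dd s=:g_2(x_1).
\]
Hence $|u|^2\le g_1(x_2)g_2(x_1)$, and Fubini gives
\[
\|u\|_{L^2}^2\le\|\partial_1u\|_{L^1}\|\partial_2u\|_{L^1}\le\|\nabla u\|_{L^1}^2 .
\]
By mollification and cutoff the same bound extends to every $u\in W^{1,1}$ with compact support.

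\emph{Step 2 (identifying $c_\infty$).} I would show that $h$ has a limit in measure at infinity, using truncation and the coarea/layer structure. For levels $\lambda\in\R$, consider
\[
h_\lambda^+:=(h-\lambda)^+ \qquad\text{and}\qquad h_\lambda^-:=(\lambda-h)^+ .
\]
These are $W^{1,1}_{\loc}$ functions with $|\nabla h_\lambda^\pm|\le|\nabla h|$ pointwise a.e.
\begin{enumerate}
\item The key intermediate claim is that for each $\lambda$, at least one of $\{h>\lambda\}$ and $\{h<\lambda\}$ has finite measure. Apply Step 1 to $\eta_R h_\lambda^\pm$, where $\eta_R$ is a cutoff on $B_{2R}$ with $|\nabla\eta_R|\le C/R$. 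By the two-dimensional isoperimetric (relative) inequality on annuli, the error term $\int|\nabla\eta_R|h_\lambda^\pm$ can be controlled.
\item Cleaner alternative: use the coarea formula $\int_\R\operatorname{Per}(\{h>\lambda\})\,\dd\lambda=\|\nabla h\|_{L^1}<\infty$. For a.e. $\lambda$ the set $\{h>\lambda\}$ has finite perimeter. The isoperimetric inequality for finite-perimeter sets in $\R^2$ then says that either the set or its complement has finite measure, bounded by $C\operatorname{Per}^2$.
\end{enumerate}
Define $c_\infty:=\sup\{\lambda:|\{h<\lambda\}|<\infty\}$. The finiteness of the total perimeter integral forces $c_\infty$ to be finite. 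The reason is that if all levels had $\{h>\lambda\}$ co-finite, then $|\{h<\lambda\}|<\infty$ for every $\lambda$. Combining this with $\lambda\to-\infty$ and the coarea bound $|\{h<\lambda\}|\le C\operatorname{Per}(\{h<\lambda\})^2$ leads to a contradiction, since $\R^2$ would have finite measure.

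\emph{Step 3 (the estimate).} Put $u^+:=(h-c_\infty)^+$, so that $\{u^+>s\}$ has finite measure for every $s>0$. By the layer-cake formula, the coarea formula, and the isoperimetric inequality $|E|^{1/2}\le C\operatorname{Per}(E)$, we get
\[
\|u^+\|_{L^2}\le\int_0^\infty|\{u^+>s\}|^{1/2}\,\dd s\le C\int_0^\infty\operatorname{Per}(\{h>c_\infty+s\})\,\dd s .
\]
Here the first inequality is Minkowski's inequality applied to $u^+=\int_0^\infty\1_{\{u^+>s\}}\,\dd s$. The same bound holds for $u^-:=(c_\infty-h)^+$. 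Adding the two gives $\|h-c_\infty\|_{L^2}\le C\|\nabla h\|_{L^1}$.

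\emph{Uniqueness.} If two constants both work, their difference is a constant lying in $L^2(\R^2)$, hence it is zero.

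\emph{Main obstacle.} The delicate point is Step 2: verifying that the sets $\{h>c_\infty+s\}$ with $s>0$, and $\{h<c_\infty-s\}$ with $s>0$, genuinely have finite measure rather than co-finite measure, so that the isoperimetric inequality is applied to the correct side. I would handle this with the monotonicity in $\lambda$ of the dichotomy: $\{h<\lambda\}$ is increasing in $\lambda$. Together with the definition of $c_\infty$ as a supremum, this gives finiteness of $\{h<\lambda\}$ for $\lambda<c_\infty$ and co-finiteness for $\lambda>c_\infty$. For the latter, the complement $\{h>\lambda\}$ must then have finite measure by the a.e.-level dichotomy, which extends to all levels by monotonicity.
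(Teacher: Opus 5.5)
Your proof is correct in substance, but it takes a different route from the paper. The paper gives no argument of its own. It simply invokes Mal\'y--Ziemer, Theorem~1.78: for $1\le p<n$, any $u\in W^{1,1}_{\loc}(\R^n)$ with $\nabla u\in L^p$ differs from some constant $c$ by an $L^{p^*}$ function, with $\|u-c\|_{L^{p^*}}\le C\|\nabla u\|_{L^p}$. The paper uses this only in the case $n=2$, $p=1$.

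You instead give a self-contained Federer--Fleming/Maz'ya-type proof tailored to $p=1$. It has three ingredients:
\begin{enumerate}
\item the $BV$ coarea formula;
\item the planar isoperimetric dichotomy, namely that either $\{h>\lambda\}$ or its complement has measure at most $C\operatorname{Per}^2$ (this follows from the relative isoperimetric inequality in $B_R$ plus a connectedness argument in $R$);
\item Minkowski's inequality applied to the layer-cake decomposition.
\end{enumerate}
These identify $c_\infty$ explicitly as the threshold level separating finite-measure from co-finite sublevel sets, and then give the $L^2$ bound directly.

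The citation buys brevity and covers every $p<n$. Your route buys transparency about where the constant at infinity comes from, at the cost of importing $BV$ machinery. In your version, Step~1 serves only as the engine behind the isoperimetric inequality $|E|^{1/2}\le C\operatorname{Per}(E)$, and item (1) of Step~2 can be dropped.

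One small repair is needed in the proof that $c_\infty$ is finite: the limit is taken on the wrong side, and only one case is treated.
\begin{itemize}
\item Suppose $|\{h<\lambda\}|<\infty$ for every $\lambda$. Integrability of $\lambda\mapsto\operatorname{Per}(\{h<\lambda\})$ gives $\lambda_k\to+\infty$ with $\operatorname{Per}(\{h<\lambda_k\})\to0$. The isoperimetric inequality then forces $|\{h<\lambda_k\}|\to0$. This contradicts $\{h<\lambda\}\uparrow\R^2$, so $c_\infty<+\infty$.
\item Suppose instead $|\{h<\lambda\}|=\infty$ for every $\lambda$. The dichotomy then makes $|\{h>\lambda\}|\le C\operatorname{Per}(\{h>\lambda\})^2$ for a.e.\ $\lambda$. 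The same argument along $\lambda_k\to-\infty$ contradicts $\{h>\lambda\}\uparrow\R^2$, so $c_\infty>-\infty$.
\end{itemize}
Your ``$\R^2$ would have finite measure'' conclusion is reached only along such a sequence on which the perimeter tends to zero; the pointwise bound $|\{h<\lambda\}|\le C\operatorname{Per}^2$ alone does not give it.
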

	
	\begin{proof}
		This is the case $n=2$ and $p=1$ of \cite[Theorem~1.78]{Maly1997}.
	\end{proof}
	
	\subsection*{Data Availability} No datasets were generated or analyzed during the current study.
	
	\subsection*{Competing Interests} The authors declare that they have no competing interests.

\end{document}